\newtheorem{thm}{Theorem}[section]
\newtheorem*{thm*}{Theorem}
\newtheorem{cor}[thm]{Corollary}
\newtheorem{prop}[thm]{Proposition}
\newtheorem*{prop*}{Proposition}
\newtheorem{lem}[thm]{Lemma}
\theoremstyle{definition}
\newtheorem{defn}[thm]{Definition}
\newtheorem{exmp}[thm]{Example}
\newtheorem{notn}[thm]{Notation}
\theoremstyle{remark}
\newtheorem{rem}[thm]{Remark}
\newtheorem*{idea*}{Idea}
\let\c@equation\c@thm
\numberwithin{thm}{section}
\numberwithin{equation}{section}
\newcommand{\SL}{\text{SL}}
\newcommand{\tot}{\mathrm{tot}}
\title{\textsc{Monodromy of rank 2 parabolic Hitchin systems}}
\author{Georgios Kydonakis, Hao Sun and Lutian Zhao}
\begin{document}
\maketitle
\flushbottom

\begin{abstract}
We study the monodromy of the Hitchin fibration for moduli spaces of parabolic $G$-Higgs bundles in the cases when $G=\text{SL}(2,\mathbb{R})$, $\text{GL}(2,\mathbb{R})$ and $\text{PGL}(2,\mathbb{R})$. A calculation of the orbits of the monodromy with $\mathbb{Z}_{2}$-coefficients provides an exact count of the components of the moduli spaces for these groups.
\end{abstract}

\section{Introduction}

The monodromy of (non-parabolic) Hitchin systems has been closely studied in the literature over the last two decades. J. Copeland in \cite{Cope} used a combinatorial approach to determine the monodromy representation of the Hitchin fibration over hyperelliptic curves; this study involves the $\text{SL}(2,\mathbb{C})$-Hitchin map only over the generic points. Later on, L. Schaposnik in \cite{Schap} expanded on Copeland's approach over any compact Riemann surface using the study of the topology of strata of the moduli space of quadratic differentials from \cite{Walk}. In particular, the monodromy group on the $2$-torsion points of the first cohomology of the fibers of the $\text{SL}(2,\mathbb{C})$-Hitchin fibration is explicitly described in \cite{Schap} by a group of matrices and, subsequently, a study of the orbits of the monodromy group then provides a calculation of the number of connected components of the moduli space of semistable $\text{SL}(2,\mathbb{R})$-Higgs bundles. More recently, D. Baraglia and L. Schaposnik in \cite{BarScha} computed the monodromy for rank 2 twisted $G$-Hitchin systems using lattices of the spectral covering of the underlying Riemann surface. In this paper, we develop the necessary machinery to describe the monodromy in the case of moduli spaces of rank 2 $G$-Higgs bundles equipped with a parabolic structure.

An important remark is that in the non-parabolic case the Hitchin fibration can be described by the Jacobians of the spectral covering, a property coming from the well-known Beauville-Narasimhan-Ramanan (BNR) correspondence \cite{BNR}, which is a one-to-one correspondence in the non-parabolic case. We note,  however, that the parabolic version of the correspondence does not involve in general a one-to-one correspondence and is closely related to the given parabolic structure. Special attention is thus needed when describing the parabolic Hitchin fibration for parabolic $G$-Higgs bundles, when $G$ is a complex Lie group or a real form.

Let $X$ be a smooth Riemann surface of genus $g$ and let $D$ be a reduced effective divisor on $X$. In this paper, we are not working on parabolic $G$-Higgs bundles with arbitrary parabolic structures. We are interested in parabolic bundles, of which the weights in the parabolic structure can be written as a fraction (not necessarily reduced) with denominator $2$. Denote by $\mathcal{M}_{par}(G_{c})$ the moduli space of polystable rank $2$ parabolic $G_{c}$-Higgs bundles over the pair $(X, D)$  such that the weights can be written as a fraction with denominator $2$ (see Notation \ref{212.1}), for the complex Lie groups $G_{c}=\text{SL}(2,\mathbb{C})$, $\text{GL}(2,\mathbb{C})$ and $\text{PGL}(2,\mathbb{C})$. The parabolic Hitchin fibration sends a point from the moduli space $\mathcal{M}_{par}(G_{c})$ to a pair of sections in $\mathcal{H}=\bigoplus_{i=1}^2 H^0(X,(K(D))^i)$. For an element in the regular locus $\eta \in H_{reg} \subseteq \mathcal{H}$ the spectral curve construction involves a double cover $X_\eta \to X$ of the Riemann surface $X$, which in the case of strongly parabolic Higgs bundles is completely ramified over the points in $D$.

We use the fundamental correspondence between parabolic Higgs bundles over $(X, D)$ and Higgs $V$-bundles over the $V$-surface  $M$ (as an orbifold) to develop the parabolic version of the BNR correspondence. This way, we may focus on parabolic line bundles over the spectral curve $X_\eta$ described by the $V$-Picard group $\text{Pic}_V(M_\eta)$ of the corresponding spectral covering $M_{\eta} \to M$ of the $V$-surface $M$. In general, the BNR correspondence for Higgs $V$-bundles (alternatively for parabolic Higgs bundles) is not one-to-one. In \S 3.3 we construct a subvariety of  $\text{Pic}_V(M_\eta)$ over which the BNR correspondence is in fact one-to-one. This construction involves restricting to specific weight type parabolic Higgs bundles over $(X, D)$ and is used in order to understand the 2-torsion points in these subvarieties.

On the other hand, the correspondence to Higgs $V$-bundles opens the way to define appropriate topological invariants for the moduli spaces $\mathcal{M}_{par}(G)$ (see Definition \ref{216}), when $G=\text{SL}(2,\mathbb{R})$, $\text{GL}(2,\mathbb{R})$ and $\text{PGL}(2,\mathbb{R})$, thus providing a minimum number of the connected components of $\mathcal{M}_{par}(G)$. Namely, we summarize our results as follows:

\begin{prop}[\textbf{Propositions 4.1, 4.2 and 4.4}]
Let $X$ be a smooth Riemann surface of genus $g$ and let $D$ be a reduced effective divisor of $s$ many points on $X$.
The minimum number of connected components of the moduli space $\mathcal{M}_{par}(G)$ of polystable parabolic $G$-Higgs bundles over the pair $(X, D)$ is given as follows:
\begin{enumerate}
\item if $G=\text{SL}\left( 2,\mathbb{R} \right)$, the minimum number is $2^{2g+s}+2^s(2g-3+s)$;
\item if $G=\text{GL}\left( 2,\mathbb{R} \right)$, the minimum number is $2^s(2^{2g+s-1}-1)+2^{s}\cdot (g-1+\frac{s}{2})+2^{2g+s-1}$;
\item if $G=\text{PGL}\left( 2,\mathbb{R} \right)$, the minimum number is $2^{2g+s}+2^s(2g-3+s)$.
\end{enumerate}
\end{prop}

In order to improve this minimum component count to an exact component count, we follow a different approach than the Morse-theoretic techniques used in \cite{KSZ2} for solving a similar problem. The methods for the study of the topology of moduli spaces of parabolic $G$-Higgs bundles for $G$ a real Lie group first appeared in the dissertation of M. Logares \cite{Loga} (see also the articles \cite{GaLoMu} and \cite{LogaBetti}), who has studied the case $G = \text{U}(p,q)$; these methods involve the analysis of the moment map of the moduli space, which is a Morse-Bott function.

We rather calculate here the orbits of the monodromy action on the 2-torsion points of lattices over the spectral covering $M_\eta$ of the corresponding $V$-surface $M$, inspired by \cite{BarScha} and \cite{Schap}, where moduli spaces of non-parabolic rank 2 Higgs bundles have been considered. We define the lattice $\Lambda_M$ to be the first $V$-cohomology of the $V$-surface $H^1_V(M,\mathbb{Z})$ and similarly $\Lambda_{M_\eta}:=H^1_V(M_\eta,\mathbb{Z})$ denotes the lattice of the spectral covering of $M$. Let $\Lambda_{P_V}$ be the kernel of the natural map $\Lambda_{M_\eta} \rightarrow \Lambda_M$ and denote by $\widetilde{\Lambda}_{P_V}$ the $\mathbb{Z}_2$-extension of $\Lambda_{P_V}$. We are interested in  the monodromy action on the $2$-torsion points of the lattices $\Lambda_{M}$, $\Lambda_{M_\eta}$ and $\widetilde{\Lambda}_{P_V}$. The reason for this is the following. If $|M|$ is the underlying surface of the $V$-surface $M$ and $D$ is the fixed divisor, then there is a natural surjection $\pi_1(|M|\backslash D) \rightarrow \pi_1(M)$, inducing a map in cohomology $H^1_V(M,\mathbb{Z}) \longrightarrow H^1(|M| \backslash D,\mathbb{Z})$. Now the $2$-torsion points of $\Lambda_M$ is isomorphic to the $2$-torsion points of the lattice $\Lambda_{|M|\backslash D}$, hence we can consider $\Lambda_M$ as the lattice for the noncompact surface $|M| \backslash D$ in this special case. For $\Lambda_{M}[2]$, $\Lambda_{M_\eta}[2]$ and $\widetilde{\Lambda}_{P_V}[2]$ denoting the $2$-torsion points of the lattices, we show the following:

\begin{prop}[\textbf{Propositions 6.1, 7.1 and 7.3}]
The fiber of the parabolic Hitchin fibration ${{\mathsf{\mathcal{M}}}_{par}}\left( G \right)\to \mathsf{\mathcal{H}}$ with respect to a point $a_0$ in the regular locus $H_{reg}$ is given as follows:
\begin{enumerate}
\item if $G=\text{SL}(2,\mathbb{R})$, the fiber is the space $\widetilde{\Lambda}_{P_V}[2]$;
\item if $G=\text{GL}(2,\mathbb{R})$, the fiber is the space $\Lambda_{M_\eta}[2]$;
\item if $G=\text{PGL}(2,\mathbb{R})$, the fiber is the space $\left(\Lambda^{0}_{M_\eta}[2]\bigoplus\Lambda^{\frac{1}{2}}_{M_\eta}[2]\right) / \pi^* \Lambda_M[2],$ where $\Lambda^{0}_{M_\eta}[2] \cong \Lambda^{\frac{1}{2}}_{M_\eta}[2] \cong \Lambda_{M_\eta}[2]$ and the superscript $0$ and $\frac{1}{2}$ denotes the parabolic degree of the corresponding parabolic line bundle.
\end{enumerate}
\end{prop}

Using the description of the fiber of the Hitchin fibration, we next study the monodromy action on the fiber based on particular decompositions of $\Lambda_{M_\eta}[2]$ and $\widetilde{\Lambda}_{P_V}[2]$, which allow us to identify elements in the orbits of the monodromy action; we have accordingly:

\begin{prop}[\textbf{Propositions 6.2 and 7.4}]
The number of orbits under the monodromy action on the Hitchin fibration ${{\mathsf{\mathcal{M}}}_{par}}\left( G \right)\to \mathsf{\mathcal{H}}$ is given as follows:
\begin{enumerate}
\item if $G=\text{SL}(2,\mathbb{R})$, the number of orbits is $2^{2g+s}+2^s(2g-3+s)$;
\item if $G=\text{GL}(2,\mathbb{R})$, the number of orbits is $2^{s}(2^{2g+s-1}-1)+ 2^{s} \cdot (g-1+\frac{s}{2})+ 2^{2g+s-1}$;
\item if $G=\text{PGL}(2,\mathbb{R})$, the number of orbits is $2^{2g+s}+2^s(2g-3+s)$.
\end{enumerate}
\end{prop}

The above propositions now imply our main theorem:

\begin{thm}[\textbf{Theorems 6.4, 7.5 and 7.6}]
Let $X$ be a smooth Riemann surface of genus $g$ and let $D$ be a reduced effective divisor of $s$ many points on $X$. The number of connected components of the moduli space $\mathcal{M}_{par}(G)$ of polystable parabolic $G$-Higgs bundles over the pair $(X, D)$ is
\begin{enumerate}
\item $2^{2g+s}+2^s(2g-3+s)$, if $G=\text{SL}(2,\mathbb{R})$;
\item $2^{s}(2^{2g+s-1}-1)+ 2^{s} \cdot (g-1+\frac{s}{2})+ 2^{2g+s-1}$, if $G=\text{GL}(2,\mathbb{R})$;
\item $2^{2g+s}+2^s(2g-3+s)$, if $G=\text{PGL}(2,\mathbb{R})$.
\end{enumerate}
\end{thm}

In \S 2 we review the main definitions for the moduli spaces which are studied in this article. In \S 3 we consider the Hitchin fibration and the construction of the spectral curve for rank 2 parabolic Hitchin systems. We also discuss here the parabolic version of the BNR correspondence with particular focus on the subvarieties of the Picard group restricted to which the correspondence is one-to-one, as well as on the Prym variety of the spectral covering of the $V$-surface. In \S 4 we include the discussion for the topological invariants leading to the minimum number of connected components, while in \S 5 we study the monodromy action on 2-torsion points on the lattices. Finally, \S 6 and 7 include the exact calculation of the number of orbits of the monodromy action.

One of the main tools used in this article is the BNR correspondence for the rank 2 parabolic Hitchin systems of interest. We have included in an appendix a proof of the correspondence for smooth Deligne-Mumford stacks; this generalization is of independent interest and may be used for broader applications in the context of orbicurves.

\section{Parabolic rank 2 Higgs bundles}
In this preliminary section, we introduce terminology for the moduli spaces of parabolic $G$-Higgs bundles that we are primarily interested in this article. A general theory of parabolic $G$-Higgs bundles in the case of any real reductive Lie group $G$ can be found in \cite{BiGaRi}. Notation \ref{212.1} and Definition \ref{216} give the definitions of the moduli spaces, which will be studied in detail from \S 4 to \S 7.

Let $X$ denote a smooth compact connected Riemann surface of genus $g$ and $D=\left\{ {{x}_{1}},\ldots ,{{x}_{s}} \right\}$ a reduced effective divisor on $X$. Fix this pair $\left( X,D \right)$. Let also $K$ denote the canonical line bundle over $X$; we shall write $K\left( D \right):=K\otimes {{\mathsf{\mathcal{O}}}_{X}}\left( D \right)$. We define next a rank 2 parabolic Higgs bundle over $\left( X,D \right)$ following C. Simpson \cite{Simp}.

\begin{defn}\label{201} Let $E$ be a holomorphic rank 2 vector bundle on $X$. We say $E$ is \emph{parabolic} over $\left( X,D \right)$
if it is endowed with a \emph{parabolic structure} along $D$. The latter is described by a filtration	\[E={{E}_{{{x}_{i}},1}}\supseteq {{E}_{{{x}_{i}},2}}\supseteq {{E}_{{{x}_{i}},3}}=\left\{ 0 \right\}\]
together with a collection of real numbers called \emph{weights},  $0\le {{\alpha }_{1}}\left( {{x}_{i}} \right)\le {{\alpha }_{2}}\left( {{x}_{i}} \right)<1$, over each point ${{x}_{i}}\in D$. The \emph{multiplicity} of a weight ${{\alpha }_{j}}\left( {{x}_{i}} \right)$ is defined as the number ${{m}_{j}}\left( {{x}_{i}} \right)=\dim\left( {{{E}_{{{x}_{i}},j}}}/{{{E}_{{{x}_{i}},j+1}}}\; \right)$, for $j=1,2$ and $1\le i\le s$.
\end{defn}

We shall denote by $\left( E,\alpha  \right)$ a parabolic vector bundle of rank 2 over $\left( X,D \right)$ equipped with a parabolic structure determined by a system of weights $\alpha =\left( \alpha_{1}\left( x_i \right),\alpha_{2}\left( x_i \right) \right)$ at each ${{x}_{i}}\in D$. Natural parabolic structures can be constructed to obtain a notion of dual, subbundle, direct sum, tensor product or exterior product of parabolic bundles; see for instance \S 2.1 in \cite{KSZ2} for a detailed description of these constructions.  Note that these notions were defined in \cite{Sesh} and \cite{Yoko2} for any parabolic structure. The notation $E^{\vee}$ shall be used throughout the text to denote the parabolic dual of a parabolic bundle $E$.

\begin{defn}\label{202}
Let $\left( E, \alpha \right)$ and $\left( F,\beta  \right)$ be parabolic vector bundles over the pair $\left( X,D \right)$.  A holomorphic map $f:E\to F$ is called \emph{parabolic} if ${{\alpha }_{i}}\left( x \right)>{{\beta}_{j}}\left( x \right)$ implies $f\left( {{E}_{x,i}} \right)\subset {{F}_{x,j+1}}$ for every $x\in D$. Furthermore, we call such map \emph{strongly parabolic} if ${{\alpha }_{i}}\left( x \right)\ge {{\beta}_{j}}\left( x \right)$ implies $f\left( {{E}_{x,i}} \right)\subset {{F}_{x,j+1}}$ for every $x\in D$; this means alternatively that $f$ is a meromorphic map with at most simple poles along the divisor $D$ and with  $\text{Re}{{\text{s}}_{x}}f $ nilpotent for each $x\in D$.
\end{defn}

A stability condition for parabolic vector bundles is defined in \cite{Simp} with respect to the following notions for \emph{parabolic degree} and \emph{parabolic slope} of a parabolic vector bundle $\left( E,\alpha  \right)$ over $\left( X,D \right)$:

\[par\deg \left( E \right)=\deg E+{\sum\limits_{i=1}^{s}{\left[ {{\alpha }_{1}}\left( {{x}_{i}} \right)+{{\alpha }_{2}}\left( {{x}_{i}} \right) \right]}}\]
\[par\mu \left( \text{E} \right)=\frac{par\text{deg}\left( E \right)}{\text{rk}\left( E \right)}\]

\begin{defn}
A parabolic vector bundle $E$ over the pair $(X, D)$ is called \emph{stable} (resp. \emph{semistable}), if for every non-trivial parabolic subbundle $V\subseteq E$, it is $par\mu \left( V \right)<par\mu \left( E \right)$ (resp.$\le $).
\end{defn}

\subsection{Parabolic rank 2 $G_c$-Higgs bundles for $G_c$ complex.}
We now introduce moduli spaces of the families of pairs we are studying in this article.

\begin{defn}[parabolic $\text{GL}(2,\mathbb{C})$-Higgs bundle]\label{203}
A \emph{(strongly) parabolic $\text{GL}\left( 2,\mathbb{C} \right)$-Higgs bundle} over $\left( X,D \right)$ is defined as a pair $\left( E,\Phi  \right)$ consisting of a parabolic rank 2 vector bundle $E$ over $X$ and a (strongly) parabolic homomorphism $\Phi :E\to E\otimes {{K}}\left( D \right)$ called the \emph{Higgs field}.
\end{defn}

The stability condition for pairs  $(E,\Phi)$ is defined as follows:

\begin{defn}\label{205}
A (strongly) parabolic $\text{GL}\left( 2,\mathbb{C} \right)$-Higgs bundle $\left( E,\Phi  \right)$ will be called \emph{stable} (resp. \emph{semistable}), if for every non-trivial $\Phi $-invariant parabolic subbundle $V\subseteq E$, it holds that  $par\mu \left( V \right)<par\mu \left( E \right)$ (resp.$\le $); by $\Phi $-invariant it is meant here that $\Phi \left( V \right)\subseteq V\otimes K\left( D \right)$. The pair $(E,\Phi)$ is \emph{polystable}, if it is a direct sum of lower rank parabolic Higgs bundles of the same slope.
\end{defn}

Fix the topological invariants $d=par \deg E, n=\text{rk} E$ and a positive integer $m$. The \emph{moduli space} $\mathsf{\mathcal{M}}_{par}(d,n,m)=\mathsf{\mathcal{M}}\left( d;\alpha  \right)$ is defined as the set of isomorphism classes of polystable parabolic $\text{GL}\left( 2,\mathbb{C} \right)$-Higgs bundles over $(X,D)$ such that $d=par \deg E, n=\text{rk} E$ and all weights can be written as a fraction with denominator $m$.  Note that this fraction is not necessarily a reduced fraction. This moduli space was first constructed using Geometric Invariant Theory methods by K. Yokogawa in \cite{Yoko1}, who next showed that it is a complex quasi-projective variety, smooth at the stable points (see \cite{Yoko2}).

\begin{rem}\label{206}
We say that the weights are \emph{generic}, when every semistable parabolic Higgs bundle is automatically stable. We also call the parabolic structure of $E$ over $x$ \emph{trivial} if $\alpha_1(x)=\alpha_2(x)$, in other words, the filtration is trivial. The parabolic structure of $E$ over $x$ is called \emph{full} if $\alpha_1(x) \neq \alpha_2(x)$, that means, the filtration is full.
\end{rem}

Additional assumptions are now needed for the notion of parabolic $\text{SL}\left( 2,\mathbb{C} \right)$ and parabolic $\text{PGL}\left( 2,\mathbb{C} \right)$-Higgs bundles. We follow P. Gothen and A. Oliveira from \cite{GoOl}:

\begin{defn}[parabolic $\text{SL}(2,\mathbb{C})$-Higgs bundle]\label{207}
Let $\Lambda \in \text{Pi}{{\text{c}}^{d}}\left( X \right)$ be a holomorphic line bundle over $X$ with $\deg \Lambda =d$. A \emph{(strongly) parabolic $\text{SL}\left( 2,\mathbb{C} \right)$-Higgs bundle} over $\left( X,D \right)$ is defined as a pair $\left( E,\Phi  \right)$ consisting of a parabolic rank 2 vector bundle $E$ with ${{\wedge }^{2}}E\cong \Lambda $ and a (strongly) parabolic Higgs field $\Phi :E\to E\otimes K\left( D \right)$, such that $\text{Tr}\left( \Phi  \right)=0$.
\end{defn}

\begin{rem}\label{208}
The line bundle $\Lambda$ here is not considered to be necessarily trivial. Moreover, for the exterior product ${{\wedge }^{2}}E$ we suppress its natural parabolic structure inherited from the one on $E$.
\end{rem}

\begin{defn}[parabolic $\text{PGL}(2,\mathbb{C})$-Higgs bundle]\label{211}
A \emph{(strongly) parabolic $\text{PGL}\left( 2,\mathbb{C} \right)$-Higgs bundle} over $\left( X,D \right)$ is defined by an equivalence class $\left[ \left( E,\Phi  \right) \right]$ of parabolic $\text{GL}\left( 2,\mathbb{C} \right)$-Higgs bundles, where two pairs $\left( E,\Phi  \right)$ and $\left( {E}',{\Phi }' \right)$ are considered equivalent if there exists a parabolic line bundle $B$ over $X$, such that ${E}'\cong E\otimes B$ with the induced parabolic structure and ${\Phi }'=\Phi \otimes {{1}_{B}}$.
\end{defn}

\begin{rem}\label{212}
In the non-parabolic case, for the fixed degree $d$ holomorphic line bundle $\Lambda \to X$, let $d\equiv c\bmod 2$. Then, any holomorphic $\text{PGL}\left( 2,\mathbb{C} \right)$-bundle $E\to X$ with $\deg E=c$ lifts to a holomorphic vector bundle  $V\to X$ with ${{\wedge }^{2}}V\cong \Lambda $. Moreover, for any two such lifts  $V$ and  ${V}'$, there is a line bundle $L\to X$ with ${{L}^{2}}\cong {{\mathsf{\mathcal{O}}}_{X}}$, such that $V\cong {V}'\otimes L$.
\end{rem}

\begin{defn}\label{209}
Let $G_c$ be one of the two complex Lie groups $\text{GL}\left( 2,\mathbb{C} \right)$ or $\text{SL}\left( 2,\mathbb{C} \right)$. Fix a parabolic degree $d$ and a positive integer $m$. Then the \textit{moduli space of (strongly) parabolic $G_c$-Higgs bundles} is defined as the collection of isomorphism classes of semistable (strongly) parabolic $G_c$-Higgs bundles. We shall denote the moduli of polystable parabolic $G_c$-Higgs bundles by $\mathcal{M}_{par}(G_c,d,m)$ and that of strongly parabolic by $\mathcal{M}_{par}^{s}(G_c,d,m)$.
\end{defn}

The moduli space $\mathcal{M}_{par}(G_c,d,m)$ parameterizes semistable parabolic $G_c$-Higgs bundles, of which the weights can be written as a fraction with denominator $m$. Note that we are not fixing a particular parabolic structure. In fact, we have
\begin{align*}
\mathcal{M}_{par}(G_c,d,m)=\bigcup\limits_{\alpha}\mathcal{M}_{par}(G_c,d,\alpha),
\end{align*}
where $\alpha$ is a parabolic structure such that the weights are a fraction with denominator $m$, and $\mathcal{M}_{par}(G_c,d,\alpha)$ is the moduli space constructed by K. Yokogawa in \cite{Yoko1} parameterizing semistable parabolic $G_c$-Higgs bundles with a fixed parabolic structure $\alpha$, and the union is taking all such parabolic structures $\alpha$.

\begin{notn}\label{212.1}
In this paper we are interested in the case when $d=0$ and $m=2$. To simplify notation, define $\mathcal{M}_{par}(G_c):=\mathcal{M}_{par}(G_c,0,2)$ to be the moduli space.
\end{notn}

The moduli space of strongly parabolic $\text{SL}\left( 2,\mathbb{C} \right)$-Higgs bundles over $\left( X,D \right)$ can be also viewed as a fiber of the determinant map of the moduli space of strongly parabolic $\text{GL}\left( 2,\mathbb{C} \right)$-Higgs bundles. Indeed, for
\begin{align*}
p:\mathsf{\mathcal{M}}_{par} \left( \text{GL}\left( 2,\mathbb{C} \right) \right) & \to \text{Pi}{{\text{c}}^{d}}\left( X \right)\times {{H}^{0}}\left( X,K(D) \right)\\
\left( E,\Phi  \right) & \mapsto \left( {{\wedge }^{2}}E,\text{Tr}\left( \Phi  \right) \right)
\end{align*}
it is then $\mathsf{\mathcal{M}}_{par}\left( \text{SL}\left( 2,\mathbb{C} \right) \right)={{p}^{-1}}\left( \Lambda ,0 \right)$.

Note that the 2-torsion points of the Jacobian $\text{Jac}\left( X \right)$ are equivalent to spin structures on the surface $X$:
	\[{{\Gamma }_{2}}\cong {{H}^{1}}\left( X,{{\mathbb{Z}}_{2}} \right)\cong \mathbb{Z}_{2}^{2g}.\]
In order to obtain the moduli space of parabolic $\text{PGL}\left( 2,\mathbb{C} \right)$-Higgs bundles over $\left( X,D \right)$, we consider the action of the group ${{\Gamma }_{2}}$ on the fiber ${{p}^{-1}}\left( \Lambda ,0 \right)$ considered above given by
	\[\gamma \cdot \left( E,\Phi  \right)=\left( E\otimes {{L}_{\gamma }},\Phi \otimes {{1}_{{{L}_{\gamma }}}} \right),\]
for an element $\gamma \in {{\Gamma }_{2}}$ viewed as an abstract group, and a line bundle ${{L}_{\gamma }}\in {{\Gamma }_{2}}$ viewed as a 2-torsion point of $\text{Jac}\left( X \right)$. This action is trivial on the parabolic structure of $E$, since ${{L}_{\gamma }}$ is not considered to be equipped with any parabolic structure. We now define:

\begin{defn}\label{213}
Fix a holomorphic line bundle $\Lambda $ on $X$ of degree $d\in \mathbb{Z}$. The \emph{moduli space} of polystable (strongly) parabolic  $\text{PGL}\left( 2,\mathbb{C} \right)$-Higgs bundles with fixed degree $c\equiv d\bmod 2$, of which the weights can be written as a fraction with denominator $2$, is defined as the quotient space
	\[\mathsf{\mathcal{M}}_{par,c}^{(s)}\left( \text{PGL}\left( 2,\mathbb{C} \right) \right)={{{p}^{-1}}\left( \Lambda ,0 \right)}/{{{\Gamma }_{2}}}\;.\]
\end{defn}
This space is not smooth, but is an orbifold with singularities arising from the fixed points of the action of the group ${{\Gamma }_{2}}$.

\subsection{Parabolic rank 2 $G$-Higgs bundles for $G$ real.}
We next introduce parabolic $G$-Higgs bundles, for the groups $G=\text{GL}(2,\mathbb{R})$, $\text{SL}(2,\mathbb{R})$ and $\text{PGL}(2,\mathbb{R})$.

\begin{defn}\label{214}
Let $(X,D)$ be a pair of a Riemann surface and a divisor as considered earlier.
\begin{enumerate}
\item A parabolic $\text{GL}(2,\mathbb{R})$-Higgs bundle over $(X,D)$ is a pair $(E,\Phi)$, where $E$ is a rank $2$ parabolic bundle over $X$ equipped with an orthogonal structure $$\langle ,\rangle : E \otimes E \rightarrow \mathbb{C},$$
where the tensor product is the parabolic tensor product, and $\Phi$ is a parabolic Higgs field which is symmetric with respect to the orthogonal structure $\langle,\rangle$.
\item A parabolic $\text{SL}(2,\mathbb{R})$-Higgs bundle over $(X,D)$ is a triple $(N,\beta,\gamma)$, where $N$ is a parabolic line bundle and $\beta \in H^0(X,N^2 \otimes K(D))$, $\gamma \in H^0(X,N^{-2}\otimes K(D))$.
\item A parabolic $\text{PGL}(2,\mathbb{R})$-Higgs bundle over $(X,D)$ is an equivalence class of a triple $(E,\Phi,A)$, where $E$ is a rank $2$ parabolic bundle with a symmetric, non-degenerate bilinear pairing $$\langle ,\rangle : E \otimes E \rightarrow A$$ with values in a parabolic line bundle $A$ and $\Phi$ is a parabolic Higgs field. We say $(E,\Phi,A)$ and $(E',\Phi',A')$ are \emph{equivalent} if there is a parabolic line bundle $B$ such that $(E,\Phi,A)=(E'\otimes B, \Phi' \otimes \text{Id}_B, A \otimes B^2)$.
\end{enumerate}
\end{defn}

\begin{defn}\label{215}
The stability condition for parabolic $G$-Higgs bundles is induced from the stability condition in Definition \ref{205}. Accordingly, we define:
\begin{enumerate}
\item A parabolic $\text{GL}(2,\mathbb{R})$-Higgs bundle $(E,\Phi)$ is called \emph{stable} (resp. \emph{semistable}) if for any $\Phi$-invariant parabolic line bundle $V \subseteq E$, it is
    \begin{align*}
    par \deg(V)<0 \quad (\text{resp. } par \deg(V) \leq 0).
    \end{align*}
    We say $(E,\Phi)$ is \emph{polystable} if either $(E,\Phi)$ is stable, or $E=N \bigoplus N^{\vee}$ and $\Phi=\phi. \text{Id}$ for some $\phi \in H^0(X,K(D))$, where the orthogonal structure comes from the dual pairing on $N$ and $N^{\vee}$.
\item A parabolic $\text{SL}(2,\mathbb{R})$-Higgs bundle $(N,\beta,\gamma)$ is called \emph{stable} (resp. \emph{semistable, polystable}), if the corresponding $\text{GL}(2,\mathbb{R})$-Higgs bundle is stable (resp. semistable, polystable).
\item A parabolic $\text{PGL}(2,\mathbb{R})$-Higgs bundle $(E, \Phi, A)$ is called \emph{stable} (resp. \emph{semistable}), if for any $\Phi$-invariant parabolic line bundle $N \subseteq E$, it is
    \begin{align*}
    par \deg(N)< \frac{par \deg(A)}{2} \quad (\text{resp. } par \deg(N) \leq \frac{par \deg(A)}{2}).
    \end{align*}
    A parabolic $\text{PGL}(2,\mathbb{R})$-Higgs bundle $(E,\Phi,A)$ is called \emph{polystable}, if it is stable, or $\Phi=0$ and $E=N_1 \bigoplus N_2$, where
    \begin{align*}
    par \deg(N_1)=par \deg(N_2)=\frac{par \deg(A)}{2},\quad A=L_1 L_2,
    \end{align*}
    and the orthogonal structure on $E$ comes from the pairing on $N_1$ and $N_2$.
\end{enumerate}
\end{defn}

\begin{rem}\label{250}
In fact, one can consider more generally $L$-twisted (strongly) parabolic $G$-Higgs bundles $(E, \Phi)$ for (strongly) parabolic Higgs fields $\Phi: E \to E \otimes{L} $, where $L$ is a parabolic line bundle with $l = par \deg (L) \ge 2g-2+s$. In this article we shall discuss only the case when $L\cong K(D)$, that means, $l=2g-2+s$. However, the main results in \S 4, 5 and 7 later on can be obtained for moduli spaces of $L$-twisted parabolic $G$-Higgs bundles using exactly the same approach.
\end{rem}

\begin{defn}\label{216}
Let $G$ be one of the real Lie groups $\text{GL}(2,\mathbb{R})$, $\text{SL}(2,\mathbb{R})$ or $\text{PGL}(2,\mathbb{R})$. We shall denote by $\mathcal{M}_{par}(G)$ the moduli space of isomorphism classes of polystable parabolic $G$-Higgs bundles over the pair $(X,D)$ such that the weights in the parabolic structure can be written as a fraction (not necessarily reduced) with denominator $2$.
\end{defn}

\begin{rem}\label{210}
The following properties about complex and real Higgs bundles can be checked.
\begin{enumerate}
\item For the rank 2 real Lie groups $G=\text{GL}(2,\mathbb{R})$, $\text{SL}(2,\mathbb{R})$ and $\text{PGL}(2,\mathbb{R})$,  a parabolic $G$-Higgs bundle is naturally a parabolic $G_c$-Higgs bundle, for $G_c = \text{GL}(2,\mathbb{C}), \text{SL}(2,\mathbb{C})$ or $\text{PGL}(2,\mathbb{C})$ respectively.
\item A parabolic $\text{SL}(2,\mathbb{R})$-Higgs bundle $(N,\beta,\gamma)$ can be considered as a $\text{GL}(2,\mathbb{R})$-Higgs bundle $(E,\Phi)$, where $E=N\bigoplus N^{\vee}$ and $\Phi=\begin{pmatrix} 0 & \beta \\ \gamma & 0 \end{pmatrix}$ is a trace free parabolic Higgs field. In fact, if a $\text{GL}(2,\mathbb{R})$-Higgs bundle $(E,\Phi)$ has the above property, then it has a natural $\text{SL}(2,\mathbb{R})$-structure. Therefore, we may use the data $(N,\beta,\gamma)$ to represent this $\text{SL}(2,\mathbb{R})$-Higgs bundle.
\item The moduli space of parabolic $\text{SL}(2,\mathbb{R})$-Higgs bundles is alternatively obtained considering fixed points of a holomorphic involution: A parabolic $\text{SL}(2,\mathbb{R})$-Higgs bundle is an $\text{SL}(2,\mathbb{C})$-Higgs bundle that is fixed under the transpose map
\begin{align*}
    \Theta_{\text{SL}(2,\mathbb{R})}:  \mathcal{M}_{par}(\text{SL}(2,\mathbb{C})) & \rightarrow \mathcal{M}_{par}(\text{SL}(2,\mathbb{C}))\\
     (E,\Phi) & \rightarrow  (E^*,\Phi^t).
    \end{align*}
\item In the non-parabolic case, let $(E,\Phi,A)$ be a $\text{PGL}\left( 2,\mathbb{R} \right)$-Higgs bundle and let ${{\wedge }^{2}}E\cong \Lambda $ for $\deg E=c$ fixed. The $\text{PGL}\left( 2,\mathbb{R} \right)$-Higgs bundle $(E,\Phi,A)$ is equivalent to a $\text{PGL}\left( 2,\mathbb{R} \right)$-Higgs bundle $(E',\Phi',A')$, with $\deg(E')=0$ or $1$ (see \cite{BarScha}). In the parabolic case any parabolic $\text{PGL}\left( 2,\mathbb{R} \right)$-Higgs bundle $(E,\Phi,A)$ is equivalent to a parabolic $\text{PGL}\left( 2,\mathbb{R} \right)$-Higgs bundle $(E',\Phi',A')$, where $par \deg E=0$ or $\frac{1}{2}$.
\item Let $(E,\Phi,A)$ be a stable $\text{PGL}(2,\mathbb{R})$-Higgs bundle. Suppose that $E=N_1 \oplus N_2$ can be decomposed into two parabolic line bundles. In this case $\Phi$ is a strictly upper (or lower) triangular matrix with respect to the decomposition $E=N_1 \oplus N_2$. It is easy to check that $N_1 \otimes A^{\vee}$ and $N_2 \otimes A^{\vee}$ are parabolic dual to each other.

\end{enumerate}
\end{rem}

\section{Parabolic BNR Correspondence and Higgs $V$-Bundles}

In this section, we discuss the parabolic BNR correspondence. In \S 3.1 and \S 3.2, we discuss the parabolic BNR correspondence and the correspondence between parabolic bundles and $V$-bundles for general $m$, where $m$ is the denominator of the weights in the parabolic structures. In \S 3.3 and \S 3.4, we work on the case when $m=2$.

\subsection{The Parabolic Hitchin fibration}
We next consider the Hitchin fibration and spectral curve construction in the case of parabolic $G_c$-Higgs bundles $\left( E,\Phi  \right)$ over $\left( X,D \right)$, where $G_c$ is either of $\text{SL}(2,\mathbb{C})$, $\text{GL}(2,\mathbb{C})$ or $\text{PGL}(2,\mathbb{C})$. The Hitchin map for
parabolic Higgs bundles was first defined in \cite{LoMa} for non-strongly parabolic Higgs fields $\Phi$, while it was studied for strongly parabolic $\Phi$ in \cite{GoLog}. We review these constructions for the particular cases we are interested in next. A concrete description of the generic fibers of the parabolic Hitchin map can be also found in \cite{SWW}.

Let $\tot(K(D))$ be the total space of the line bundle $K(D)$ and let $\pi:\tot(K(D))\to X$ be the canonical projection. Let, lastly, $\lambda\in H^0(\pi^* K(D))$ be the tautological section. On the moduli space of parabolic $G_c$-Higgs bundles $\mathcal{M}_{par}(G_c)$ we consider the \emph{parabolic Hitchin fibration}
\begin{align*}
h:\mathcal{M}_{par}(G_c) \to \mathcal{H}=\bigoplus_{i=1}^2 H^0(X,(K(D))^i),
\end{align*}
which sends a pair $\left( E,\Phi  \right)$ to the pair of sections $\eta =\left( {{\eta }_{1}},{{\eta }_{2}} \right)\in \mathsf{\mathcal{H}}$, where the ${{\eta }_{i}}$ are defined as the coefficients of the characteristic polynomial of the Higgs field $\Phi$
\[\det(\lambda\cdot \mathrm{id}-\pi^* \Phi)=\lambda^2+\eta_1 \lambda +\eta_2 \in H^0(\tot(K(D)),\pi^*(K(D)^2)).\]
Note that for $\Phi $ strongly parabolic, its eigenvalues vanish at $D$. As an example, on the moduli space of parabolic  $\SL(2,\mathbb{C})$-Higgs bundles, the parabolic Hitchin fibration is
$h:\mathcal{M}_{par}(\text{SL}(2,\mathbb{C})) \to H^0(X,(K(D))^2)$, sending
a pair $\left( E,\Phi  \right)$ to ${\eta }_{2} \in H^0(X,(K(D))^2)$. In this case the trace of $\Phi$ vanishes, thus the characteristic polynomial is $\lambda^2+\eta_2$, i.e. $\eta_1=0$.

Given a point $\eta=(\eta_1, \eta_2)\in \mathcal{H}$, the \textit{spectral curve} $X_\eta$ is defined as the zero set of the characteristic polynomial $\det(\lambda\cdot \mathrm{id} -\pi^*\Phi)=0$. The restriction of $\pi$ to $X_\eta$ gives a 2-cover of the Riemann surface $X$ and is ramified when the determinant has multiple roots. In the case of strongly parabolic Higgs bundles the cover is completely ramified over the parabolic points, since on these points the spectral curve is defined by $\lambda^2=0$.

By a Bertini theorem argument (also Lemma 3.1 in \cite{GoLog}), there is an open and dense subset $H_{reg}\subset \mathcal{H}$, such that $X_\eta$ is smooth when $\eta\subset H_{reg}$. In this paper we slightly change the definition of the regular locus of the classical non-parabolic Hitchin fibration and denote by $H_{reg}$ the set of elements $\eta \in \mathcal{H}$ such that the corresponding spectral curve $X_\eta$ is smooth and the intersection between the set of branch points $B$ (of the spectral covering $X_\eta \rightarrow X$) and the set $D$ is empty. We call $H_{reg}$ the \emph{regular locus of the parabolic Hitchin fibration}. If an element $\eta$ is in $H_{reg}$, we call this element $\eta$ a \emph{generic element}. Clearly, $H_{reg}$ is a subset of the regular locus of the non-parabolic Hitchin fibration and it is still an open dense subset in $\mathcal{H}$; we are actually adding the condition $B \bigcap D = \emptyset$ in the definition of $H_{reg}$ to exclude the strongly parabolic case.

For a generic element in $\mathcal{H}$, the genus ${{g}_{{{X}_{\eta }}}}$ of the spectral curve $X_\eta$  can be calculated by the adjunction formula as follows
\begin{align*}
2{{g}_{{{X}_{\eta }}}}-2  =& \deg(K_{X_\eta})=K_{X_\eta}\cdot X_\eta \\
=&(K_{\text{tot}(K(D))}+X_\eta) \cdot X_\eta\\
=& 2 c_1(\mathcal{O}(-D))+2^2 X_\eta^2\\
=& -2s+4(2g-2+s)\\
=& 8g-8+2s,
\end{align*}
where $g$ denotes the genus of $X$ and $s$ the number of points in $D$. This provides the genus of the spectral curve
\begin{align*}
{{g}_{{{X}_{\eta }}}}=4g-3+s.
\end{align*}

By Lemma 3.2 of \cite{GoLog}, if we fix a parabolic structure $\alpha$, then the fiber of the parabolic Hitchin fibration over a generic element $\eta$ is a torsor over the Prym variety
$$\mathrm{Prym}(X_\eta,X):=\{L\in \mathrm{Pic}(X_\eta): \det \pi_* L=\mathcal{O}_X\}.$$
Indeed,
$$h^{-1}(\eta)=\{L\in \mathrm{Pic}(X_\eta): \det \pi_* L=\det E\}.$$

Let $\alpha(x)$ be the corresponding parabolic structure for each point $x \in D$. A parabolic line bundle $(L,\widetilde{\alpha})$ over $X_\eta$ is called \emph{compatible with the given parabolic structure} $\alpha$, if
\begin{align*}
\{\widetilde{\alpha}(\widetilde{x}), \widetilde{x} \in \pi^{-1}(x)\}=\{\alpha_i(x), 1 \leq i \leq 2\}.
\end{align*}

The following Proposition provides the Beauville-Narasimhan-Ramanan correspondence in the parabolic case:
\begin{prop}[Proposition 5.2 in \cite{KSZ2}]\label{301}
Let $X$ be a smooth Riemann surface of genus $g$ and let $D$ be a reduced effective divisor on $X$. Fix a parabolic structure $\alpha$ for a rank 2 parabolic bundle over $X$ and a pair of sections $\eta=\left( {{\eta }_{1}},{{\eta }_{2}} \right)$, where $\eta_i$ is a section of $K(D)^i$ for $1 \leq i \leq 2$. Suppose that the surface $X_\eta$ is non-singular and the intersection of the branch points $B$ and the given divisor $D$ is empty. If we fix an order for the pre-image $\{\widetilde{x}_1, \widetilde{x}_2\}$ of each $x \in D$, then there is a bijective correspondence between isomorphism classes of parabolic line bundles $(L,\widetilde{\alpha})$ on $X_\eta$ such that
\begin{align*}
\widetilde{\alpha}(\widetilde{x}_i)=\alpha_i(x),
\end{align*}
and isomorphism classes of pairs $(E, \Phi)$, where $E$ is a parabolic bundle of rank 2 with parabolic structure $\alpha$ and $\Phi:E \rightarrow E \otimes K(D)$ a parabolic Higgs field with characteristic coefficients $\eta_i$.
\end{prop}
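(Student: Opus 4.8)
The plan is to deduce the statement from the classical Beauville--Narasimhan--Ramanan correspondence \cite{BNR} for the smooth spectral curve $X_\eta$, and then to transport the parabolic data across $\pi$ by a local computation near $D$, where the hypothesis $B\cap D=\emptyset$ makes the cover \'etale.

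For the underlying non-parabolic correspondence we argue as in \cite{BNR}. Since $X_\eta$ is smooth and $\pi\colon X_\eta\to X$ is finite and flat of degree $2$, for a line bundle $L$ on $X_\eta$ the sheaf $E:=\pi_*L$ is locally free of rank $2$, and multiplication by the tautological section $\lambda$ gives $\lambda\colon L\to L\otimes\pi^*K(D)$, which pushes forward --- via the projection formula $\pi_*(L\otimes\pi^*K(D))\cong E\otimes K(D)$ --- to a Higgs field $\Phi\colon E\to E\otimes K(D)$ whose characteristic coefficients are, by construction, the given $\eta_i$ (equivalently, whose spectral curve is $X_\eta$). For the inverse, given $(E,\Phi)$ with characteristic coefficients $\eta_i$, Cayley--Hamilton makes $E$ a torsion-free $\mathcal{O}_{X_\eta}$-module of rank $1$, hence a line bundle as $X_\eta$ is smooth; over $X\setminus D$ the two assignments are already mutually inverse on isomorphism classes.

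It remains to match the parabolic structure along $D$. Because $\eta\in H_{reg}$ the branch locus $B$ misses $D$, so over a small disc $U$ around $x\in D$ one has $\pi^{-1}(U)=U_1\sqcup U_2$ with $\widetilde{x}_i\in U_i$ and $\pi|_{U_i}\colon U_i\to U$ an isomorphism; correspondingly $E|_U=L^{(1)}\oplus L^{(2)}$, and since the two branches of $\lambda$ over $U$ are the eigenvalue functions of $\Phi$ --- distinct at $x$, as $x\notin B$ --- the summand $L^{(i)}$ is exactly the eigenline of $\Phi$ over $\widetilde{x}_i$. Transporting the weight $\widetilde{\alpha}(\widetilde{x}_i)$ through $\pi|_{U_i}$ equips $L^{(i)}$ with a weight at $x$, so the induced direct-sum parabolic structure on $E$ has weights $\{\widetilde{\alpha}(\widetilde{x}_1),\widetilde{\alpha}(\widetilde{x}_2)\}=\{\alpha_1(x),\alpha_2(x)\}$ and filtration term $E_{x,2}=L^{(2)}$ (the larger-weight line, using $\alpha_1(x)\le\alpha_2(x)$); thus $(E,\Phi)$ carries the parabolic structure $\alpha$, and $\Phi$, being block-diagonal near $D$, is a parabolic Higgs field --- and, its residue at each $x$ having distinct generically nonzero eigenvalues, not a strongly parabolic one, as one expects when $B\cap D=\emptyset$. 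For the converse direction, given $(E,\Phi)$ with parabolic structure $\alpha$ we take the line bundle $L$ from the non-parabolic inverse; the parabolic condition on $\Phi$ forces $E_{x,2}$ to be $\Phi$-invariant, hence one of the two eigenlines of $\Phi$ at $x$, and assigning that eigenline the weight $\alpha_2(x)$ and the other the weight $\alpha_1(x)$ --- reading off via the fixed order on $\pi^{-1}(x)$ which eigenline lies over $\widetilde{x}_1$ and which over $\widetilde{x}_2$ --- recovers a weight function $\widetilde{\alpha}$ with $\widetilde{\alpha}(\widetilde{x}_i)=\alpha_i(x)$ (when $\alpha_1(x)=\alpha_2(x)$ the structure is trivial and nothing is chosen). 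One then checks that these constructions are mutually inverse: over $X\setminus D$ this is classical, and along $D$ it is the bijectivity of the weight-to-filtration matching just described.

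The main point needing care is this last step near $D$: verifying that the chosen order on $\pi^{-1}(x)$ makes the passage between a weight function on $L$ and a flag on $E_x$ well defined and inverse to push-forward, and that it is genuinely the \emph{filtration} on $E_x$ --- not merely the multiset of weights --- that is being matched. The hypothesis $B\cap D=\emptyset$, which gives an \'etale cover over $D$ and distinct eigenvalues for the residue of $\Phi$, is exactly what keeps this comparison clean; in the ramified (strongly parabolic) regime the natural approach is instead the orbifold one, replacing $X$ and $X_\eta$ by the associated $V$-surfaces and invoking the BNR correspondence for smooth Deligne--Mumford stacks from the appendix.
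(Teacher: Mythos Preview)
The paper does not prove this statement in the body; it is imported as Proposition~5.2 of \cite{KSZ2}. The closest thing to an in-house argument is the appendix, which sets up a BNR correspondence for smooth Deligne--Mumford stacks: one trades parabolic bundles on $(X,D)$ for bundles on the root stack $\tilde X$ via Borne's equivalence, then identifies $\pi^*K(D)$-twisted Higgs bundles on $\tilde X$ with coherent sheaves on the spectral stack $\tilde X_\eta$, and translates back. Your route is genuinely different and more elementary: you stay on the honest curves $X$ and $X_\eta$, invoke the classical BNR correspondence for the underlying non-parabolic data, and match the parabolic structures by a hands-on local computation over the \'etale locus above $D$. The stacky argument buys uniformity --- it treats the ramified (strongly parabolic) case on the same footing and generalizes to arbitrary rank --- whereas your argument makes transparent exactly why $B\cap D=\emptyset$ is the hypothesis that permits a direct flag-by-flag match. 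You note this trade-off yourself at the end, which is apt.

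The step you single out as ``needing care'' is indeed the crux, and your sketch does not quite close it. When $\alpha_1(x)<\alpha_2(x)$, the parabolic condition forces $E_{x,2}$ to be an eigenline of the residue, but nothing forces it to be the one lying over $\widetilde{x}_2$; if it lies over $\widetilde{x}_1$, your recipe yields $\widetilde\alpha(\widetilde x_1)=\alpha_2(x)$, contrary to the stated constraint. This is exactly the two-to-one ambiguity the paper illustrates in Example~\ref{302}(2) and the reason the surrounding discussion insists that fixing the sheet order is what makes the spectral parabolic structure $\widetilde\alpha$ canonical. The clean way to close the loop is to read the forward map as landing on those $(E,\Phi)$ whose flag line at each full-flag point $x$ is the eigenline over $\widetilde{x}_2$; with that understanding your two constructions are visibly inverse, and varying the order recovers the remaining $(E,\Phi)$, consistent with the paper's remark that without the order the correspondence is not one-to-one.
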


This proposition says that given a rank $2$ bundle on $X$, we may get a parabolic line bundle on $X_\eta$; we shall briefly explain next how to construct a parabolic bundle on $X$ given a parabolic line bundle on $X_\eta$. Let $L$ be a parabolic line bundle on $X_\eta$. If we forget the parabolic structure, we get a rank $2$ bundle $E$ on $X$ from the classical BNR correspondence. Now, for a point $x \in D$ with pre-images $\{\widetilde{x}_1, \widetilde{x}_2\}$, if $\widetilde{\alpha}(\widetilde{x}_1)=\widetilde{\alpha}(\widetilde{x}_2)$, then we define the parabolic structure on $E|_x$ as the trivial filtration with weight $\widetilde{\alpha}(\widetilde{x}_2)$. If $\widetilde{\alpha}(\widetilde{x}_1) < \widetilde{\alpha}(\widetilde{x}_2)$, we define the parabolic structure on $E|_x$ as
\begin{align*}
 E  = & {{E}_{{{x}_{i}},1}}  \supseteq {{E}_{{{x}_{i}},2}}\supseteq {{E}_{{{x}_{i}},3}}=\left\{ 0 \right\}\\
 & 0  \le \widetilde{\alpha}(\widetilde{x}_1) < \widetilde{\alpha}(\widetilde{x}_2) <1.
\end{align*}

In the proposition above we fix the order of the sheets of the covering $X_\eta \rightarrow X$ and give a spectral parabolic structure $\widetilde{\alpha}$ for the corresponding line bundle $L$. This thus describes a one-to-one correspondence. In general the parabolic BNR correspondence may not be a one-to-one correspondence. The correspondence depends on the parabolic structure of the holomorphic bundle $E$.  We next provide two examples to make this situation more clear:
\begin{exmp}\label{302}
Let $(E,\Phi)$ be a parabolic $\text{GL}(2,\mathbb{C})$-Higgs bundle over $X$. Denote by $\alpha$ the parabolic structure and let $X_\eta$ be its spectral covering. Under the classical BNR correspondence, there is a line bundle $L$ over $X_\eta$, which is uniquely determined by $E$. We fix a point $x \in D$, and let $x',x''$ be its pre-images in $X_\eta$. We only discuss the parabolic structures of $E$ and $L$ over $x$ and $x',x''$ respectively.
\begin{enumerate}
\item Let $\alpha_1(x)=\alpha_2(x)$. The parabolic structures over $x'$ and $x''$ are both trivial with weight $\alpha_1(x)$. In this case, the parabolic structure of $L$ over $x',x''$ is uniquely determined. More generally, if the filtration is trivial, then the correspondence is one-to-one.
\item Let $\alpha_1(x)<\alpha_2(x)$. Then there are two possibilities for the parabolic structures over $x'$ and $x''$. The first is the weight of $L|_{x'}$ to be $\alpha_1(x)$ and the weight of $L|_{x''}$ to be $\alpha_2(x)$. The second is the weight of $L|_{x'}$ to be $\alpha_2(x)$ and the weight of $L|_{x''}$ to be $\alpha_1(x)$. In general if the filtration is not trivial, then the correspondence is not one-to-one.
\end{enumerate}
\end{exmp}
As discussed above, the parabolic BNR correspondence may not be a one-to-one correspondence and the correspondence is closely related to the parabolic structure. Thus we have to be more careful when describing the parabolic $\text{GL}(2,\mathbb{C})$-Hitchin fibration as a disjoint copy of the Picard group of the spectral covering $X_\eta$. More generally, the parabolic BNR correspondence may not be one-to-one for parabolic $G$-Higgs bundles, where $G$ is a real Lie group. In this paper we are not dealing with this problem for arbitrary $G$, but we rather consider the rank 2 cases $\text{SL}(2,\mathbb{R})$, $\text{GL}(2,\mathbb{R})$ and $\text{PGL}(2,\mathbb{R})$ for which it turns out that the BNR correspondence is in fact one-to-one when restricted to a particular subvariety (see the proofs of the forthcoming Propositions \ref{601}, \ref{701} and \ref{703} respectively); we are using the $V$-surface and $V$-Picard group introduced in the next subsection to exhibit this construction in further detail.

\subsection{Higgs $V$-Bundles and Spectral Covering of a $V$-Surface}
Fix a positive integer $m$. Let $M$ be the $V$-surface with underlying surface $X$ for a collection of marked points $D=\{x_1,\dots,x_s\}$. The local chart around $x_j$ is isomorphic to $\mathbb{D}^2/ \mathbb{Z}_{m}$, where $\mathbb{D}^2$ is the unit disk. The $V$-surface $M$ is an orbifold. With respect to this construction (see  \cite{FuSt} for details) the data $(X,D,m)$ can uniquely determine the $V$-surface $M$. We also assume that $M$ can be written as a global quotient. 

Given a $V$-surface $M$ with respect to the data $(X,D,m)$, we define the $V$-cohomology $H^{*}_V(M)$ in the following way. Define $M_V$ as a union of $M \backslash \{x_1,...,x_s\}$ and $\coprod  D_i \times_{\mathbb{Z}_{m}}E \mathbb{Z}_{m}$,  where $E \mathbb{Z}_{m}$ denotes the universal bundle over $B \mathbb{Z}_{m}$, the classifying space of $\mathbb{Z}_{m}$, and $D_i$ is a neighborhood around $x_i$ with $\mathbb{Z}_m$-action. These charts can be glued naturally. Note also that by $V$-surfaces we mean the orbifolds, and the $V$-cohomology is precisely orbifold cohomology. The $V$-cohomology $H^{*}_V(M,\mathbb{Z}_2)$ with coefficient $\mathbb{Z}_2$ is defined as
\begin{align*}
H^{*}_V(M,\mathbb{Z}_2)=H^{*}(M_V,\mathbb{Z}_2).
\end{align*}
Let $m=2$. The $\mathbb{Z}_2$-coefficient first $V$-cohomology is $H^{1}_V(M,\mathbb{Z}_2) \cong \mathbb{Z}_2^{2g+s-1}$ (see \S 7 in \cite{KSZ}). In \S 5 later on, we use the first $V$-cohomology $H^{1}_V(M,\mathbb{Z}_2)$ to define the lattice.

A parabolic bundle $E$ with rank $n$ over the Riemann surface $X$ can be pulled-back to an orbifold bundle $\mathcal{E}$ over $M$ using the flag data to construct local representations of cyclic groups on the fiber $\mathbb{C}^n$ over punctures in $D$; then $\mathcal{E}$ pushes forward to $E$. A parabolic bundle $E$ over $X$ which is a push forward has rational weights of the form $k/m$ in the flag over each parabolic point, where $m$ is the order of the cone point in $M$. H. Boden in \cite{Boden} called parabolic bundles with such rational weights \emph{commensurate with $M$} and showed that for a commensurate parabolic bundle $E$ over $X$ there exists a holomorphic orbifold bundle $\mathcal{E}$ over $M$, so that $E$ is the push forward of $\mathcal{E}$ (see Definition 4.3 and Proposition 4.4 in \cite{Boden}). There is a one-to-one correspondence between parabolic Higgs bundles over $(X,D)$ such that any weight can be written as a fraction with denominator $m$ and Higgs $V$-bundles over $M$, as pioneered in the work of B. Nasatyr and B. Steer \cite{NaSt}; see also \S 6 and 7 in \cite{KSZ} for a version of this correspondence in our setting.

Let $\eta$ be a section $H_{reg} \subseteq H^0(M,K(D)^2)$, where $K(D)$ here is actually the pull-back of the bundle $K(D)$ over $X$ to $M$. Let $(E_M,\Phi_M)$ be a rank $2$ Higgs $V$-bundle over $M$ such that the characteristic polynomial of $\Phi$ is $\lambda^2 + \eta$. Since we assumed that $M$ can be written as a global quotient $[Y/ \Gamma]$, it is equivalent to consider $\eta$ as a $\Gamma$-equivariant section in $H^0(X,K_Y^2)$ and $(E,\Phi)$ as a $\Gamma$-equivariant Higgs bundle over $Y$ (see \cite{Biswas2}).

Now we construct the $V$-surface $M_\eta$ as the zero set of $\lambda^2 + \eta$, which can be considered as the spectral covering of $M$. As a $\Gamma$-equivariant section, $\eta$ can be naturally considered as a section in $H^0(X,K(D)^2)$ under a change of coordinate $w=z^m$. Let $D_\eta$ be the pre-image $\pi^{-1}D$, where $\pi: X_\eta \rightarrow X$ is the natural projection. By the definition of $H_{reg}$, given $x \in D$, it has two pre-images $x',x'' \in D_\eta \subseteq X_\eta$. The local charts of $M_\eta$ are given as follows
\begin{align*}
& \phi_j: U_{x'_j} \rightarrow \mathbb{D}^2 / \mathbb{Z}_{m}, & 1 \leq j \leq s;\\
& \phi_j: U_{x''_j} \rightarrow \mathbb{D}^2 / \mathbb{Z}_{m}, & 1 \leq j \leq s;\\
& \phi_p: U_p \rightarrow \mathbb{D}^2, & p \in M \backslash \pi^{-1}(D),
\end{align*}
where $\mathbb{D}^2$ is the disk. In other words, the local charts around $x',x'' \in D_\eta$ are the same as that of $x=\pi(x')=\pi(x'') \in D$. This gives the $V$-surface $M_\eta$.

\begin{rem}
Let $(E,\Phi)$ be a strongly parabolic Higgs bundle and let $\lambda^2 + \eta$ be the characteristic polynomial for the Higgs field $\Phi$. The section $\eta \in H^{0}(X, K^2(D))$ is a section of $K(D)^2$ vanishing along $D$. Note that in the parabolic case, we always assume that the intersection of the zero set of the section $\eta$ and the divisor $D$ is empty. With respect to this assumption, we can construct the spectral covering $M_\eta$ of $M$ such that for $x\in M$, the pre-image $\pi^{-1}(x)$ always contains two points. However, in the strongly parabolic case the points in $D$ are also branch points, therefore the pre-image $\pi^{-1}(x)$ contains only one point.
\end{rem}

\subsection{Parabolic BNR Correspondence}
In subsection \S 3.1 we described a BNR correspondence for parabolic Higgs bundles. This correspondence also applies for Higgs $V$-bundles under the correspondence between parabolic Higgs bundles and Higgs $V$-bundles:
\begin{center}
\begin{tikzcd}	
	\text{Higgs $V$-bundles over $M$ } \arrow[r,"\text{ BNR }"] \arrow[d,"\text{ Parabolic Bundles vs. $V$-bundles }"] & \text{line $V$-bundles over $M_\eta$} \arrow[d,"\text{ Parabolic Bundles vs. $V$-bundles }"]\\

	 \text{ parabolic Higgs bundles} \arrow[r,"\text{ BNR }"] & \text{parabolic line bundles over $X_\eta$}
\end{tikzcd}	
\end{center}
Any parabolic Higgs bundle can be viewed as a Higgs $V$-bundle; thus in this article we shall be using the two terms interchangeably.

The $V$-Picard group $\text{Pic}_V(M_\eta)$ is defined as the group of isomorphism classes of line $V$-bundles; line $V$-bundles with faithful isotropy representations correspond to Seifert fibrations when the corresponding circle bundles are taken (see \cite{FuSt}). Constructing a one-to-one correspondence in the parabolic case is equivalent to finding a subvariety of the $V$-Picard group $\text{Pic}_V(M_\eta)$ such that there is a one-to-one correspondence between the points (line $V$-bundles) in the subvariety and the Higgs $V$-bundles over $M$.

When $m=2$, the local chart around each point $x \in D=\{x_1,\dots,x_s\} \subseteq M$ is isomorphic to $\mathbb{D}^2/\mathbb{Z}_2$. We have the following short exact sequence for the $V$-Picard group
\begin{align*}
    0 \rightarrow \text{Pic}(X) \rightarrow \text{Pic}_V(M) \rightarrow \bigoplus_{i=1}^{s} \mathbb{Z}_2 \rightarrow 0,
\end{align*}
where the last term $\bigoplus_{i=1}^{s} \mathbb{Z}_2$ is parameterizing all parabolic structures of line $V$-bundles (see \cite[\S 1]{FuSt}). Also, the subvariety of all line $V$-bundles with a given parabolic structure $\alpha$ is isomorphic to $\text{Pic}(X)$.

In the case when $m=2$ and in view of Example \ref{302}, for rank 2 parabolic bundles with weights in the set $\{0,\frac{1}{2}\}$, we find that the weights in the filtration of the pre-image of each point $x\in D$ are either all the same (thus all equal to 0 or all equal to $\frac{1}{2}$), or they are not (thus one is equal to 0 and the other equal to $\frac{1}{2}$). We say that the filtration is of \emph{Type 1} if the weights in the filtration are the same, and of \emph{Type 2} otherwise.

Fix the weight type $\beta(x)$ for each point $x \in D$ and assume that there are $j$-many points in the divisor $D$ for which the weights are all the same. For these points there are $2^{2j}$ many choices for the parabolic structure, while there are $2^{s-j}$ many choices for the points in $D$ for which the weights are not the same. Let $j(\beta)=s+j$. Thus the subvariety of line bundles in this case is isomorphic to $\prod\limits_{i=1}^{j(\beta)} \text{Pic}(X)$.  Taking the union over all possible types, we construct the subvariety
\begin{align*}
\bigcup_{\beta}\prod\limits_{i=1}^{j(\beta)} \text{Pic}(X)
\end{align*}
and the parabolic BNR correspondence is a one-to-one correspondence when restricted to this subvariety of $\text{Pic}_V(M_\eta)$.

We complete this subsection reviewing some properties of the parabolic BNR correspondence, which will be used to study the $2$-torsion points in certain subvarieties of $\text{Pic}_V(M_\eta)$. Let $(E,\Phi)$ be a parabolic Higgs bundle over $(X,D)$ and let $X_\eta$ be the corresponding spectral curve. If we forget the parabolic structure, the classical BNR correspondence (Proposition 3.6 in \cite{BNR}) gives the existence of a unique line bundle $L$ over $X_\eta$ such that $\pi_*(L)=E$ and the Higgs field $\Phi$ is induced by the tautological section $\lambda: L \rightarrow L \otimes \pi^*(K(D))$. Instead of studying the line bundle $L$, we construct the line bundle $L_0$ as follows. We fix a square root $K(D)^{\frac{1}{2}}$ of $K(D)$ (as line $V$-bundle) and write $L=L_0 \otimes \pi^*K(D)^{\frac{1}{2}}$. Thus a parabolic Higgs bundle $(E,\Phi)$ corresponds to a line $V$-bundle $L_0$. The reason why we do this modification is that when the parabolic degree of $E$ is zero, then the parabolic degree of $L_0$ is also zero. Also, note that the orthogonal structure gives $(E, \Phi) \cong (E^{\vee}, \Phi^*)$, which implies that $L_0 \cong L_0^{\vee}$, that is, $L_0^2 =\mathcal{O}$ as parabolic line bundles.

\subsection{Prym Variety}
In order to study parabolic $\text{SL}(2,\mathbb{R})$-Higgs bundles of which the weights are with denominator $2$, we introduce the Prym variety as the set of line $V$-bundles over $M_\eta$ such that
\begin{align*}
\text{Prym}(M_\eta,M)=\{L \in \text{Pic}_V(M_\eta)| \tau^*L =L^{-1}\},
\end{align*}
which can be considered as the Prym variety of the spectral covering $M_\eta \rightarrow M$, where $M$ and $M_\eta$ are the corresponding $V$-surfaces of $X$ and $X_\eta$, and $\tau:M_\eta \rightarrow M_\eta$ is the involution of $M_\eta$.

Recall that we have the following short exact sequences for $V$-Picard groups $\text{Pic}_V(M)$ and $\text{Pic}_V(M_\eta)$
\begin{align*}
    0 \rightarrow \text{Pic}(X) \rightarrow \text{Pic}_V(M) \rightarrow \bigoplus_{i=1}^{s} \mathbb{Z}_2 \rightarrow 0,\\
    0 \rightarrow \text{Pic}(X_\eta) \rightarrow \text{Pic}_V(M_\eta) \rightarrow \bigoplus_{i=1}^{2s} \mathbb{Z}_2 \rightarrow 0.
\end{align*}
These exact sequences together with the condition of involution give us the following exact sequence for the Prym variety $\text{Prym}(M_\eta,M)$:
\begin{align*}
0 \rightarrow \text{Prym}(X_\eta,X) \rightarrow \text{Prym}(M_\eta,M) \rightarrow \bigoplus_{i=1}^{s} \mathbb{Z}_2 \rightarrow 0.
\end{align*}

\section{Topological invariants for moduli of rank 2 parabolic $G$-Higgs bundles}
Moduli spaces of parabolic (or non-parabolic) $G$-Higgs bundles can be decomposed into closed subvarieties, yet not necessarily connected components, for fixed values of appropriate topological invariants. In this section we describe such topological invariants for the moduli spaces we are interested in, namely for the rank 2 cases $\text{SL}\left( 2,\mathbb{R} \right)$, $\text{GL}\left( 2,\mathbb{R} \right)$ and $\text{PGL}\left( 2,\mathbb{R} \right)$. The computation of the total number of these invariants directly provides a count for the minimum number of connected components of these moduli spaces. Then, in the following sections we study the orbits of the monodromy action on the relative parabolic Hitchin systems to obtain an exact count of these components.

\subsection{Topological invariants in the case when $G=\text{SL}\left( 2,\mathbb{R} \right)$}
Let  $\left( E,\Phi  \right)$ be a parabolic $\text{SL}\left( 2,\mathbb{R} \right)$-Higgs bundle, that is, $E=N \oplus N^{\vee}$ and $\Phi =\left( \begin{matrix}
   0 & \beta   \\
   \gamma  & 0  \\
\end{matrix} \right)$, for a line bundle $N \to X$ equipped with a parabolic structure. The rational number $\tau = par\deg N$ defines a topological invariant, called the \emph{parabolic Toledo invariant}, first introduced in \cite{GaGoMu} and subsequently defined in \cite{BiGaRi} for $G$-Higgs bundles with $G/H$ a Hermitian symmetric space of noncompact type, where $H\subset G$ is a maximal compact subgroup. In \S 8.2 of \cite{BiGaRi} a general inequality of Milnor-Wood type was proven, while in Proposition 5.4 of \cite{KSZ} it is shown more explicitly that in the particular case of a semistable parabolic  $\text{SL}\left( 2,\mathbb{R} \right)$-Higgs bundle $\left( E,\Phi  \right)$, this rational number $\tau$ satisfies the inequality
\[\left| \tau  \right|\le g-1+\frac{s}{2}.\]
We prove that for each non-maximal value of this topological invariant, i.e. for $\left| \tau  \right| < g-1+\frac{s}{2}$, there is at least one connected component of the moduli space ${{\mathsf{\mathcal{M}}}_{par}}\left( \text{SL}\left( 2,\mathbb{R} \right) \right)$; this implies the existence of at least $$2\cdot \left( g-1+\frac{s}{2} \right)-1$$ topological invariants of a parabolic $\text{SL}\left( 2,\mathbb{R} \right)$-Higgs bundle and thus we have $2g-3+s$ many topological invariants for any fixed parabolic structure.

By definition, the parabolic degree is the sum of the degree of the bundle plus the contribution from the weights in the parabolic structure. In fact, we may interpret this definition from the exact sequence for $\text{Pic}_V(M)$ mentioned earlier:
\begin{align*}
 0 \rightarrow \text{Pic}(X) \rightarrow \text{Pic}_V(M) \rightarrow \bigoplus_{x \in D} \mathbb{Z}_2 \rightarrow 0.
\end{align*}
The degree $d \in \text{Pic}(X)$ and the weights $w \in \bigoplus\limits_{x\in D} \mathbb{Z}_2$ are topological invariants for the line $V$-bundle. Although different pairs $(d,w)$ may give us the same parabolic degree, they provide different topological invariants for line $V$-bundles. Note that there are $2^s$ many choices for the parabolic structure. Therefore the total number of topological invariants for the non-maximal values of $\tau$ is $2^s\cdot(2g-3+s)$.

The maximal case is more involved and in \cite[\S 7 and 8]{KSZ} we study in further generality maximal parabolic $G$-Higgs bundles, when the homogeneous space  ${G}/{H}$ is a Hermitian symmetric space of noncompact type, where $H\subset G$ is a maximal compact subgroup. In the special case $\text{SL}\left( 2,\mathbb{R} \right)=\text{Sp}\left( 2,\mathbb{R} \right)$, we have seen in Theorem 7.10 of \cite{KSZ} that for \emph{maximal} parabolic  $\text{SL}\left( 2,\mathbb{R} \right)$-Higgs bundles equipped with a \emph{fixed} parabolic structure, the connected components of the moduli space ${{\mathsf{\mathcal{M}}}_{par}^{\text{max}}}\left( \text{Sp}\left( 2,\mathbb{R} \right) \right)$ are parameterized by the square roots of $K\left( D \right)$, thus contributing to at least $2^{2g+s-1}$ maximal connected components, a number which comes from the number of the square roots of $K(D)$ as line $V$-bundles. This is also giving the number of the connected components for when the Toledo invariant achieves its minimum value $\tau =-\left( g-1+\frac{s}{2} \right)$. In conclusion, the total number of topological invariants is $2^{2g+s}+2^s(2g-3+s)$ adding up the maximal and the non-maximal cases for the Toledo invariant $\tau$; we thus have shown the following:

\begin{prop}\label{401}
Let $X$ be a smooth Riemann surface of genus $g$ and let $D$ be a reduced effective divisor of $s$ many points on $X$. The minimum number of connected components of the moduli space ${{\mathsf{\mathcal{M}}}_{par}}\left( \text{SL}\left( 2,\mathbb{R} \right) \right)$ of polystable parabolic $\text{SL}\left( 2,\mathbb{R} \right)$-Higgs bundles over the pair $(X, D)$ is $2^{2g+s}+2^s(2g-3+s)$.
\end{prop}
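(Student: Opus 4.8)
The plan is to decompose the moduli space $\mathcal{M}_{par}(\text{SL}(2,\mathbb{R}))$ along the deformation-invariant topological data of a polystable parabolic $\text{SL}(2,\mathbb{R})$-Higgs bundle $(N,\beta,\gamma)$, equivalently $(E,\Phi)$ with $E=N\oplus N^{\vee}$, and to exhibit at least $2^{2g+s}+2^s(2g-3+s)$ nonempty strata; since each stratum is open and closed, hence a union of connected components, this produces the asserted lower bound. Regarding $N$ as a line $V$-bundle on $M$, the exact sequence $0\to\text{Pic}(X)\to\text{Pic}_V(M)\to\bigoplus_{x\in D}\mathbb{Z}_2\to 0$ records it by a pair $(d,w)$ with $d=\deg N\in\mathbb{Z}$ and $w$ the vector of weights in $\{0,\tfrac12\}$; both are locally constant on the moduli space and together they pin down the parabolic Toledo invariant $\tau=\pardeg N$. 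By the Milnor--Wood inequality of \cite{KSZ}, polystability forces $|\tau|\le g-1+\tfrac s2$, so only finitely many pairs $(d,w)$ occur, and I would organize the count according to whether $\tau$ is maximal or not.

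For the non-maximal regime $|\tau|<g-1+\tfrac s2$, the enumeration recorded in \cite{KSZ} gives $2^s$ choices of weight vector and, for each, $2g-3+s$ admissible values of the remaining degree, hence $2^s(2g-3+s)$ strata in total. To see each is nonempty I would produce a stable representative: when $\tau\ge 0$ the line $V$-bundle $N^{-2}\otimes K(D)$ has nonnegative parabolic degree — strictly positive precisely because $\tau$ is sub-maximal — so a generic $\gamma\in H^0(X,N^{-2}\otimes K(D))$ yields a triple $(N,0,\gamma)$ with no $\Phi$-invariant destabilizing subbundle for generic weights; the case $\tau\le 0$ is symmetric using a generic $\beta\in H^0(X,N^2\otimes K(D))$.

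For the maximal regime $|\tau|=g-1+\tfrac s2$ the relevant twist $N^{-2}\otimes K(D)$ (or $N^{2}\otimes K(D)$) has parabolic degree zero, so the off-diagonal entry of $\Phi$ is rigid and the stratum cannot be populated freely; here I would invoke the analysis of maximal parabolic $\text{Sp}(2,\mathbb{R})=\text{SL}(2,\mathbb{R})$-Higgs bundles (via the Cayley correspondence for Hermitian-type maximal bundles) carried out in \cite{KSZ}. With the parabolic structure fixed, that correspondence parametrizes the maximal components by the square roots of $K(D)$ taken as a line $V$-bundle on $M$, which form a torsor under $\text{Pic}_V(M)[2]\cong H^1_V(M,\mathbb{Z}_2)\cong\mathbb{Z}_2^{2g+s-1}$; this gives $2^{2g+s-1}$ components over $\tau=g-1+\tfrac s2$, and the involution $\beta\leftrightarrow\gamma$ identifies them with an equal number over $\tau=-(g-1+\tfrac s2)$, for $2^{2g+s}$ maximal components altogether.

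Summing, the non-maximal and maximal strata are pairwise disjoint — separated by $(d,w)$ in the first case and by the chosen square root of $K(D)$ in the second — and each is nonempty and open and closed in $\mathcal{M}_{par}(\text{SL}(2,\mathbb{R}))$, so the moduli space has at least $2^{2g+s}+2^s(2g-3+s)$ connected components, which is the assertion. I expect the maximal regime to be the crux: establishing that the maximal stratum splits into exactly $2^{2g+s-1}$ pieces labeled by square roots of $K(D)$ rests on the full Cayley-type correspondence for maximal parabolic Higgs bundles from \cite{KSZ} together with the computation $H^1_V(M,\mathbb{Z}_2)\cong\mathbb{Z}_2^{2g+s-1}$, whereas the non-maximal count and all the non-emptiness checks are routine once the Milnor--Wood bound is in hand.
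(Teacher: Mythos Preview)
Your proposal is correct and follows essentially the same route as the paper: decompose by the parabolic Toledo invariant via the pair $(d,w)$ coming from the exact sequence for $\text{Pic}_V(M)$, invoke the Milnor--Wood inequality from \cite{KSZ} to get $2^s(2g-3+s)$ non-maximal strata, and use the Cayley-type correspondence from \cite{KSZ} together with $H^1_V(M,\mathbb{Z}_2)\cong\mathbb{Z}_2^{2g+s-1}$ to get $2\cdot 2^{2g+s-1}$ maximal components. Your explicit nonemptiness check for the non-maximal strata is a helpful elaboration that the paper leaves implicit.
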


\subsection{Topological invariants in the case when $G=\text{GL}\left( 2,\mathbb{R} \right)$}

From the correspondence to Higgs $V$-bundles (see \S 3.2) a parabolic  $\text{GL}\left( 2,\mathbb{R} \right)$-Higgs bundle $(E, \Phi)$ corresponds to a Higgs $V$-bundle over the $V$-surface $M$ with structure group $\text{O}(2,\mathbb{C})$. We shall use the same notation $(E,\Phi)$ for the corresponding Higgs $V$-bundle. Thus, there is a real rank 2 orthogonal vector bundle $Y$, such that $E= Y\otimes \mathbb{C}$ and the Stiefel-Whitney classes of $Y$, $w_{i}=w_{i}(Y)\in H^i_V(M,\mathbb{Z}_2)$, for $i=1,2$, define appropriate topological invariants for the pairs $(E, \Phi)$. Note that $\text{rk}(H^1_V(M,\mathbb{Z}_2))=2g+s-1$ and $\text{rk}(H^2_V(M,\mathbb{Z}_2))=s$ (see \S 7.3 of \cite{KSZ}).

We may now classify as follows the subspaces of the moduli space of parabolic $\text{GL}\left( 2,\mathbb{R} \right)$-Higgs bundles with fixed values for these topological invariants:

\begin{enumerate}
\item[(1)] If $w_{1}\neq 0$, then $Y$ can be considered as a real bundle over the $V$-surface $M$ with structure group $\text{O}(2)$. Thus the first and second $V$-cohomology can be taken as the topological invariants for the real bundle $Y$, and equivalently for $E$. Thus we have $2^s (2^{2g+s-1}-1)$ many topological invariants in this case.

\item[(2)] If $w_{1}= 0$, then the structure group of $E$ reduces to $\text{SO}(2, \mathbb{C}) \subset \text{O}(2,\mathbb{C})$ and thus $E$ decomposes as $E=N \oplus N^{\vee}$, for a parabolic line bundle $N$. The semistability of the initial pair $(E, \Phi)$ implies that  $0\leq par\text{deg}(N) \leq g-1+\frac{s}{2}$.
    \begin{enumerate}
    \item[a.] If $0\leq par\text{deg}(N) <  g-1+\frac{s}{2}$, then for each fixed parabolic structure on the initial pair $(E, \Phi)$, there is an induced fixed parabolic structure on the line bundle $N$, and so there are $g-1+\frac{s}{2}$ many disjoint subspaces of $\mathcal{M}_{par}(\text{GL}\left( 2,\mathbb{R} \right))$ with fixed value of the topological invariant $par\deg(N)$. Now, allowing any parabolic weight on this parabolic line bundle $N$, we have a total of $2^{s}\cdot (g-1+\frac{s}{2})$ disjoint subspaces.

    \item[b.] If $par\deg(N)=g-1+\frac{s}{2}$, then the corresponding subspaces are parameterized by the square roots $L_0$ of the parabolic line bundle $K(D)$, thus distinguishing a total of $2^{2g+s-1}$ disjoint subspaces of $\mathcal{M}_{par}(\text{GL}\left( 2,\mathbb{R} \right))$.
    \end{enumerate}
\end{enumerate}

Summing up the number of disjoint subspaces for the different values of topological invariants analyzed above, we derive the following proposition:

\begin{prop}\label{402}
Let $X$ be a smooth Riemann surface of genus $g$ and let $D$ be a reduced effective divisor of $s$ many points on $X$.
The minimum number of connected components of the moduli space of parabolic $\text{GL}\left( 2,\mathbb{R} \right)$-Higgs bundles ${{\mathsf{\mathcal{M}}}_{par}}\left( \text{GL}\left( 2,\mathbb{R} \right) \right)$ is $2^s(2^{2g+s-1}-1)+2^{s}\cdot (g-1+\frac{s}{2})+2^{2g+s-1}$.
\end{prop}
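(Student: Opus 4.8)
The plan is to stratify $\mathcal{M}_{par}(\text{GL}(2,\mathbb{R}))$ by the topological invariants introduced above and to check that each admissible value of these invariants is realized by a nonempty locally closed subspace. Passing through the correspondence of \S 3.2, a polystable parabolic $\text{GL}(2,\mathbb{R})$-Higgs bundle $(E,\Phi)$ is a Higgs $V$-bundle over $M$ with structure group $\text{O}(2,\mathbb{C})$; writing $E = Y \otimes \mathbb{C}$ for the underlying real rank $2$ orthogonal $V$-bundle $Y$, the Stiefel--Whitney classes $w_1 = w_1(Y) \in H^1_V(M,\mathbb{Z}_2)$ and $w_2 = w_2(Y) \in H^2_V(M,\mathbb{Z}_2)$ are deformation invariants of $(E,\Phi)$, hence constant on connected components. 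Since the number of components is bounded below by the number of jointly occurring values of $(w_1,w_2)$ — refined by any further discrete invariant that separates the $w_1 = 0$ locus internally — it suffices to enumerate these. Recall $\text{rk}\, H^1_V(M,\mathbb{Z}_2) = 2g+s-1$ and $\text{rk}\, H^2_V(M,\mathbb{Z}_2) = s$.

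First I treat $w_1 \neq 0$. Here $Y$ is a genuine rank $2$ real $V$-bundle with nontrivial orientation class; over the (two-dimensional) $V$-surface $M$ such bundles are freely classified by the pair $(w_1, w_2)$, and an existence argument — exhibiting, for each such pair, a polystable pair $(E,\Phi)$, e.g. via a stable choice of Higgs field — shows each value is attained. This contributes $(2^{2g+s-1}-1)\cdot 2^s$ disjoint subspaces. Next I treat $w_1 = 0$: the structure group reduces to $\text{SO}(2,\mathbb{C})$, so $E \cong N \oplus N^\vee$ for a parabolic line $V$-bundle $N$; replacing $N$ by $N^\vee$ if needed we may assume $\tau := par\deg N \geq 0$, and semistability forces the Milnor--Wood bound $0 \leq \tau \leq g-1+\frac{s}{2}$. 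In the non-maximal range $0 \le \tau < g-1+\frac{s}{2}$, the pair (degree of $N$, parabolic weight vector of $N$) is itself locally constant: for $m=2$ there are $2^s$ choices of parabolic structure on $N$, and once one is fixed $par\deg N$ takes $g-1+\frac{s}{2}$ admissible values, each realized by a stable triple — giving $2^s\cdot(g-1+\frac{s}{2})$ disjoint subspaces. For the maximal value $\tau = g-1+\frac{s}{2}$ I invoke the structure theory of maximal parabolic $\text{Sp}(2,\mathbb{R}) = \text{SL}(2,\mathbb{R})$-Higgs bundles from \cite{KSZ} (the Cayley-type correspondence for Hermitian symmetric targets): the maximal stratum is parametrized by the square roots of $K(D)$ as a line $V$-bundle, of which there are $2^{2g+s-1}$. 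Summing the three contributions yields $2^s(2^{2g+s-1}-1) + 2^s\cdot(g-1+\frac{s}{2}) + 2^{2g+s-1}$ as a lower bound.

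The main obstacle is the maximal stratum $\tau = g-1+\frac{s}{2}$: the clean count $2^{2g+s-1}$ is not visible from Stiefel--Whitney data, and one must import from \cite{KSZ} the identification of the relevant invariant with a choice of square root of $K(D)$ among line $V$-bundles (equivalently, a $V$-spin structure on $M$). A secondary point needing care is nonemptiness: for each admissible value of the invariants one must produce at least one polystable representative, so that the stratum count is genuinely a lower bound on the number of components — which is precisely why the statement asserts a \emph{minimum}, the upgrade to an exact count being carried out via the monodromy computation of \S 7.
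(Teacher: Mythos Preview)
Your proposal is correct and follows essentially the same approach as the paper: the case split on $w_1$, the reduction to $N\oplus N^{\vee}$ when $w_1=0$, the Milnor--Wood bound on $par\deg N$, and the appeal to \cite{KSZ} for the square-root count in the maximal stratum all match the paper's argument in \S 4.2. Your explicit remarks on deformation invariance and nonemptiness are more careful than the paper itself, which simply asserts the subspace count and sums.
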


\begin{rem}\label{403}
In view of Remark \ref{250}, considering the moduli space of $L$-twisted parabolic $\text{GL}\left( 2,\mathbb{R} \right)$-Higgs bundles with $L\cong (K(D))^2$ and the parabolic Cayley correspondence, then the above proposition implies the number of connected components of the moduli space of parabolic maximal $\text{Sp}\left( 4,\mathbb{R} \right)$-Higgs bundles as was computed in Theorem 7.9 of \cite{KSZ}. Note that in this case, since $L\cong (K(D))^2$ one has that $0\leq par\text{deg}(N) \leq 2g-2+s$; then the contribution to the number of disjoint subspaces from Cases (1) and (2)b above is the same, while one has a total of $2^{s}\cdot (2g-2+s)$ disjoint subspaces from (2)a.
\end{rem}

\subsection{Topological invariants in the case when $G=\text{PGL}\left( 2,\mathbb{R} \right)$}

The discussion of the topological invariants of parabolic $\text{PGL}(2,\mathbb{R})$-Higgs bundles is induced to the one for the $\text{GL}(2,\mathbb{R})$-case, further considering the equivalence relation and the parabolic structure of the parabolic line bundle $A$ in the definition.

Let $(E, \Phi, A)$ be a polystable parabolic $\text{PGL}\left( 2,\mathbb{R} \right)$-Higgs bundle. The stability condition for this triple (see Remark \ref{210}, (4)) implies that $par\deg(E)=0$ or $\frac{1}{2}$. Let $(U,\Phi)$ be a parabolic $\text{GL}(2,\mathbb{R})$-Higgs bundle, which is a representative of $(E,\Phi,A)$. By the discussion in \S 3.2 and \S 3.3, let $X_\eta$ be the spectral covering of $X$ determined by the parabolic Higgs field $\Phi$. If we forget the parabolic structure of $U$, we can define $L$ to be the corresponding line bundle over $X_\eta$. In the parabolic case it may not be a one-to-one correspondence between parabolic line bundles $L$ over $X_\eta$ and the parabolic Higgs bundle $(U,\Phi)$. Fortunately however, in the case of $\text{GL}(2,\mathbb{R})$ the correspondence is one-to-one on a particular subvariety (see Proposition \ref{701}). In other words, there are $2^{2s}$ many choices for the parabolic structures of the corresponding parabolic line bundle $L$, each of which corresponds to a unique $\text{GL}(2,\mathbb{R})$-Higgs bundle $(U,\Phi)$.

Now we are going to discuss the non-degenerate symmetric bilinear pairing $\langle ,\rangle: E \times E \rightarrow A$. If we fix the parabolic structure of $A$, the pairing $\langle ,\rangle$ will determine the parabolic structure of $U$ as follows
\begin{enumerate}
\item If the parabolic structure of $A$ over $x$ is $0 \subseteq A|_x$ with weight $\frac{1}{2}$, then the parabolic structure of $U$ over $x$ is of \emph{Type 1} (see \S 3.3).
\item If the parabolic structure of $A$ over $x$ is $0 \subseteq A|_x$ with weight $0$, then the parabolic structure of $U$ over $x$ is of \emph{Type 2}.
\end{enumerate}
Thus if we fix the parabolic structure of $A$, we have $2^{s}$ many choices for the parabolic structure of $U$. In the definition of a $\text{PGL}(2,\mathbb{R})$-Higgs bundle (Definition \ref{214}) two representatives $(U_1,\Phi_1)$, $(U_2,\Phi_2)$ are equivalent, if there is a parabolic line bundle $B$ such that $(U_1,\Phi_1)=(U_2 \otimes B, \Phi_2 \otimes {\rm Id})$. Under this equivalence all of the possible choices of $U$ are equivalent. Finally, note that there is a one-to-one correspondence between the parabolic structures and the second $V$-cohomology $H^2(M,\mathbb{Z}_2)$. Thus the second $V$-cohomology $H^2(M,\mathbb{Z}_2)$ does not play a role in the case of parabolic $\text{PGL}(2,\mathbb{R})$-Higgs bundles.

There are $2^s$ many choices for the parabolic structure of $A$. If we fix the parabolic degree of $A$ (or equivalently fix the parabolic degree of $U$), there are $2^{s-1}$ choices for the parabolic structure of $A$, each of which determines the monodromy around the punctures in $D$. In other words, consider the first $V$-cohomology in terms of the $V$-fundamental group
\begin{align*}
H_V^1(M,\mathbb{Z}_2)={\rm Hom}(\pi_V^1(M),\mathbb{Z}_2),
\end{align*}
where the $V$-fundamental group $\pi_V^1(M)$ is
\begin{align*}
\pi_V^1(M)=\{a_i,b_i,c_j, 1 \leq i \leq g, 1 \leq j \leq s | [a_1,b_1] \dots [a_g,b_g]c_1 \dots c_s=1, c_j^2=1, 1 \leq j \leq s\}
\end{align*}
and $c_j$ is the generator of the loop around the puncture $x_j$ in $D$. The parabolic structure of $A$ over $x_j$ determines the value of $c_j$.

Now we are ready to discuss the topological invariants. With the same notation as above, let $(U,\Phi)$ be a representative of $(E, \Phi, A)$. Let $U \otimes A^{\vee}=Y \otimes \mathbb{C}$, where $Y$ is the corresponding real bundle. Generally speaking, we consider the real bundle $Y$ as a real bundle over the $V$-surface $M$. This gives a well-defined element in the first $V$-cohomology $H^1_V(M,\mathbb{Z}_2)$. Denote by $w_1$ the first $V$-cohomology of $Y$. Abusing the language, we say that $w_1$ is the topological invariant of $U$.

\begin{enumerate}
\item[(1)] By the rank of $H_V^1(M,\mathbb{Z}_2)$, there are $2^{2g+s-1}$ many non-isomorphic choices for the topological invariants of $w_1$, i.e. for the choices of $(E,\Phi,A)$. As we discussed above, the real bundle $Y$ is considered as a real bundle over the $V$-surface $M$. Note that if we tensor an appropriate parabolic line bundle $B$, then $Y$ can be considered as a real bundle over $X$. Thus we have $2^{2g}$ many choices of the topological invariants for each parabolic structure of $A$. In other words, we go back to the discussion of non-parabolic $\text{GL}(2,\mathbb{R})$-Higgs bundles after we fix the parabolic structure of $A$ (see \cite{BarScha}, \S 5).

Now we fix the parabolic structure of $A$. We can consider $w_1$ as an element in $H^1(X,\mathbb{Z}_2)$ by forgetting the generators $c_j$, $1 \leq j \leq s$. If $w_1 \neq 0$ (as an element in $H^1(X,\mathbb{Z}_2))$, we show that there are $2^{2g}-1$ many topological invariants. If we only fix the parabolic degree of $A$ ($0$ or $\frac{1}{2}$), we have $2^{s-1}$ many choices for the parabolic structure of $A$. Thus we have $2^{s-1}(2^{2g}-1)=2^{2g+s-1}-2^{s-1}$ many choices for the topological invariants.

    In conclusion, if $w_1$ is nontrivial as an element in $H^1(X,\mathbb{Z}_2)$, there are $2^s(2^{2g}-1)$ many disjoint subspaces of the moduli space $\mathcal{M}_{par}(\text{PGL}\left( 2,\mathbb{R} \right))$ consisting of polystable parabolic $\text{PGL}\left( 2,\mathbb{R} \right)$-Higgs bundles with fixed first $V$-cohomology.

\item[(2)] If $w_1=0$, the parabolic bundle $U \otimes A^{\vee}$ can be decomposed as a direct sum of line bundles $U \otimes A^{\vee}=N_1 \oplus N_2$ with a similar discussion as in the case of $\text{GL}(2,\mathbb{R})$. By Remark \ref{210}, $N_1$ and $N_2$ must be parabolic dual to each other. Thus we rewrite the decomposition of $U \otimes A^{\vee}$ as $N \oplus N^{\vee}$. As was discussed above, given the parabolic structure of $A$ we have two choices for the parabolic structure of $E$ up to equivalence, and the parabolic structure of $N$ is uniquely determined by $E$. In conclusion, we have $2^s$ many choices for the parabolic structure of $A$ and $2^{s+1}$ many choices for the parabolic structure of $N$. If we fix the parabolic structure of $N$, then we go back to the discussion of Theorem 6.8 in \cite{BarScha} in the non-parabolic case. The only difference is that the authors in \cite{BarScha} discussed the parity (odd or even) of the degree of $A$ (as a line bundle), while we discuss whether the parabolic degree is an integer or not. In any case, there are $2^s\frac{2g-2+s}{2}$ many choices. In conclusion, there are $2^{s-1}(2g-2+s)$ many choices of topological invariants.
\end{enumerate}

Summing up the number of disjoint subspaces for the different values of topological invariants analyzed above, and multiplying by 2 since there are two different choices for the value of the parabolic degree of $E$, we derive the following proposition:

\begin{prop}\label{404}
Let $X$ be a smooth Riemann surface of genus $g$ and let $D$ be a reduced effective divisor of $s$ many points on $X$.
The minimum number of connected components of the moduli space ${{\mathsf{\mathcal{M}}}_{par}}\left( \text{PGL}\left( 2,\mathbb{R} \right) \right)$ of polystable parabolic $\text{PGL}\left( 2,\mathbb{R} \right)$-Higgs bundles over $(X, D)$ is $2^{2g+s}+2^s(2g-3+s)$.
\end{prop}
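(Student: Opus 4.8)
The plan is to obtain the bound by organizing the case analysis already carried out above into a single count, the crucial point being that each admissible value of the relevant topological invariants cuts out a nonempty closed subvariety of $\mathcal{M}_{par}(\text{PGL}(2,\mathbb{R}))$, so that the \emph{number} of such values is a lower bound for the number of connected components — hence the ``minimum number''. For a polystable parabolic $\text{PGL}(2,\mathbb{R})$-Higgs bundle $(E,\Phi,A)$ with representative $\text{GL}(2,\mathbb{R})$-Higgs bundle $(U,\Phi)$, the invariants in play are: the parabolic degree $par\deg E\in\{0,\tfrac12\}$ (Remark~\ref{210}(4)); the parabolic structure of $A$, which as noted is matched bijectively with $H^2_V(M,\mathbb{Z}_2)$ and hence contributes no separate count; and the first $V$-Stiefel--Whitney class $w_1=w_1(Y)\in H^1_V(M,\mathbb{Z}_2)$ of the real $V$-bundle $Y$ with $U\otimes A^{\vee}=Y\otimes\mathbb{C}$ — all taken modulo the equivalence relation of Definition~\ref{214}, i.e. modulo replacing $(U,A)$ by $(U\otimes B, A\otimes B^2)$ for a parabolic line bundle $B$. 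First I would record that these data genuinely stratify the moduli space into closed nonempty pieces, just as in the $\text{SL}(2,\mathbb{R})$ and $\text{GL}(2,\mathbb{R})$ cases (Propositions~\ref{401} and \ref{402}), so the ensuing count is legitimately a minimum.

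Next comes the dichotomy on $w_1$, mirroring the $\text{GL}(2,\mathbb{R})$ argument. If $w_1\neq 0$ as an element of $H^1(X,\mathbb{Z}_2)$ (i.e. after forgetting the puncture-loop generators $c_j$), then $Y$ does not reduce to a direct sum, and using $\text{rk}\,H^1_V(M,\mathbb{Z}_2)=2g+s-1$, the absorption of the $H^2_V$-data into the parabolic structure of $A$, and the freedom to tensor by $B$ in order to reduce to the non-parabolic $\text{GL}(2,\mathbb{R})$ count of \cite{BarScha}, one obtains the $2^s(2^{2g}-1)$ disjoint subspaces found above. If $w_1=0$ then $U\otimes A^{\vee}\cong N\oplus N^{\vee}$ for a parabolic line bundle $N$, and polystability forces the Milnor--Wood/Toledo-type bound $|par\deg N|\le g-1+\tfrac{s}{2}$; the submaximal strata are counted exactly as in the $\text{GL}(2,\mathbb{R})$ discussion preceding Proposition~\ref{402}, while the maximal strata are parametrized by the $2^{2g+s-1}$ square roots of $K(D)$ as a $V$-line bundle, as in \cite{KSZ}, the two together yielding the $2^{s-1}(2g-2+s)$ disjoint subspaces found above. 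Summing the two contributions and accounting for the two admissible values of $par\deg E$ then produces $2^{2g+s}+2^s(2g-3+s)$; as a sanity check this coincides with the $\text{SL}(2,\mathbb{R})$ count of Proposition~\ref{401}, consistent with the behaviour in the non-parabolic setting.

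The step I expect to be the main obstacle is pinning down the effect of the equivalence relation: I would need to determine precisely which of the naive tuples $(w_1,\ \text{parabolic structure of }A,\ par\deg E)$ become identified after replacing $(U,A)$ by $(U\otimes B, A\otimes B^2)$, in particular verifying that the $s$-dimensional group $H^2_V(M,\mathbb{Z}_2)$ really is fully absorbed into the parabolic structure of $A$ and contributes nothing new, and that no further collapsing takes place among the remaining choices (so that the two counts above are not over- or under-counted). A secondary point requiring care is the disjointness and nonemptiness of the maximal locus ($w_1=0$, $|par\deg N|=g-1+\tfrac{s}{2}$) together with its parametrization by square roots of $K(D)$: this is imported from \cite{KSZ}, but one should confirm that it survives intact under the passage to $\text{PGL}(2,\mathbb{R})$ and the identifications just discussed.
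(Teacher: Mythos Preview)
Your overall plan matches the paper's: the proposition is obtained by summing the $w_1\neq 0$ and $w_1=0$ counts from the discussion in \S 4.3 and accounting for $par\deg E\in\{0,\tfrac12\}$. The $w_1\neq 0$ case and the absorption of the parabolic structure of $A$ into $H^2_V(M,\mathbb{Z}_2)$ are handled as in the paper.

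The gap is in your $w_1=0$ treatment, and it is precisely the obstacle you flag at the end. You propose to import the $\text{GL}(2,\mathbb{R})$ split into submaximal Toledo strata and a maximal locus parametrized by the $2^{2g+s-1}$ square roots of $K(D)$. But under the $\text{PGL}(2,\mathbb{R})$ equivalence $(U,A)\sim(U\otimes B,A\otimes B^{2})$, any two such square roots differ by a $2$-torsion parabolic line bundle $B$ and hence become identified: the maximal parametrization does \emph{not} survive intact, it collapses. Consequently your stated breakdown ``(submaximal $\text{GL}$ count) $+\ 2^{2g+s-1}$'' cannot produce the quoted $2^{s-1}(2g-2+s)$; indeed $2^{s-1}(2g-2+s)=2^{s}(g-1+\tfrac{s}{2})$ already equals the $\text{GL}(2,\mathbb{R})$ submaximal count on its own, so adding $2^{2g+s-1}$ would over-count. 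The paper avoids this by reducing, once the parabolic structure of $N$ is fixed, to the non-parabolic $\text{PGL}(2,\mathbb{R})$ analysis of \cite[Theorem~6.8]{BarScha}, in which the $w_1=0$ contribution comes from a degree invariant alone with no separate square-root term. So the resolution of your ``secondary point'' is not that the maximal locus persists but that it is absorbed by the equivalence; once that is incorporated, the rest of your count goes through.
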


\section{Monodromy action and Lattice of a $V$-surface}

\subsection{Regular Locus $H_{reg}$ of the Parabolic Hitchin Map}
Recall that by $H_{reg}$ we denoted the regular locus of the parabolic Hitchin map in $H^0(X,K(D)^2)$. We say an element $a \in H^0(X,K(D)^2)$ is \emph{regular}, if and only if it only has simple zeros and the intersection of its zero set and $D$ is empty. In this section we study the fundamental group $\pi_1(H_{reg},a_o)$ of the regular locus as well as the fundamental group $\pi_1(\mathbb{P}H_{reg},a_0)$ of the projective space of the regular locus $\mathbb{P}H_{reg}$,  where, by abuse of notation, $a_o$ is a fixed point in $H_{reg}$ and in $\mathbb{P}H_{reg}$. We shall use the notation $\pi_1(H_{reg})$ and $\pi_1(\mathbb{P}H_{reg})$ for the fundamental group without referring to the base point. For proving most of the results included in this section, we extend the techniques from \cite{BarScha}, \cite{Cope} and \cite{Walk}, used in describing the monodromy in the non-parabolic case.

Let $X$ be a genus $g$ Riemann surface with a divisor $D$ containing $s$-many points. Denote by $X^{[n]}$ the space of $n$-tuples $(b_1,...,b_n) \in X^n$ of distinct un-ordered points in $X$. From this point on, we set $l:=\deg K(D)=2g-2+s$ to ease notation in the exposition.  Let $\rho : H_{reg} \rightarrow X^{[2l]}$ be the map taking sections to its zero set. We define the Abel map
\begin{gather*}
\mathcal{A} :  X^{[2l]}  \rightarrow  \text{Jac}(X)\\
D'  \rightarrow  |D'|\otimes K(D)^{-2},
\end{gather*}
where $D' \in X^{[2l]}$ is a pair of points in $X$. Note that $\text{Jac}(X) \cong H^0(X,K)^*/H_1(X,\mathbb{Z})$. Then, we have a natural isomorphism $H_1(\text{Jac}(X)) \cong H_1(X,\mathbb{Z})$. Now fix a point
\begin{align*}
b_o=\sum\limits_{i=1}^{2l}b_i \in X^{[2l]}.
\end{align*}
The composition
\begin{align*}
\pi_1(X^{[2l]}):=\pi_1(X^{[2l]},b_o) \xrightarrow{\mathcal{A}_*} \pi_1(\text{Jac}(M)) \rightarrow H_1(\text{Jac}(X)) \rightarrow H_1(X,\mathbb{Z})
\end{align*}
takes a braid in $X^{[2l]}$ to a union of homology classes of loops in $X$. Also, there is a natural inclusion map
\begin{align*}
H_1(X,\mathbb{Z}) \rightarrow H_1(X/D,\mathbb{Z}).
\end{align*}
Thus, we define the map
\begin{align*}
\nu :\pi_1(X^{[2l]}) \rightarrow H_1(X/D,\mathbb{Z})
\end{align*}
as the composition of the above maps.

\begin{lem}\label{501}
The kernel of the map $\nu$ is generated by transpositions of the points in $b_o$.
\end{lem}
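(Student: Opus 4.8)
The plan is to analyze the map $\nu$ by factoring it through the natural surjection $\pi_1(X^{[2l]}) \to \pi_1(\mathrm{Sym}^{2l} X)$ and then through the Abel map, tracking where transpositions go and identifying the kernel via a generators-and-relations description of $\pi_1(X^{[2l]})$. First I would recall the standard presentation of the surface braid group $\pi_1(X^{[2l]})$: it is generated by the ``braid'' generators $\sigma_1,\dots,\sigma_{2l-1}$ (local swaps of adjacent points, supported in a disk) together with the ``surface'' generators obtained by moving one marked point around each of the $2g$ handle loops $a_1,b_1,\dots,a_g,b_g$ of $X$. The key observation is that under the composite to $H_1(X,\mathbb{Z})$, a braid generator $\sigma_i$ maps to $0$ — it is supported in a contractible disk, so the corresponding loop of points in $\mathrm{Sym}^{2l}X$ is null-homologous after applying the Abel map — whereas a surface generator associated to a handle loop $\gamma$ maps to the class $[\gamma] \in H_1(X,\mathbb{Z})$, and then to its image in $H_1(X/D,\mathbb{Z})$. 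Hence $\nu$ is surjective onto $H_1(X/D,\mathbb{Z})$ (or onto the image of $H_1(X,\mathbb{Z})$ there, which is all of it since $X/D$ just collapses a finite set) and its kernel is generated by the $\sigma_i$ together with the relations among the surface generators.

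The second step is to identify that kernel precisely with the subgroup generated by transpositions. The symmetric group $S_{2l}$ is generated by adjacent transpositions, which are exactly the images of the $\sigma_i$ under $\pi_1(X^{[2l]}) \to S_{2l}$; conversely, the ``pure'' part — loops of points returning each point to itself — is the pure braid group, and one checks (using that $H_1$ of the configuration space surjects the relevant lattice) that a pure braid maps to $0$ under $\nu$ precisely because its image in $H_1(\mathrm{Sym}^{2l}X)$ is a sum of handle classes that is forced to vanish. More carefully: the composite $\pi_1(X^{[2l]}) \to H_1(X^{[2l]},\mathbb{Z}) \to H_1(X,\mathbb{Z})$ (the latter via $\mathcal{A}_*$ and $H_1(\mathrm{Jac}) \cong H_1(X)$) already kills the commutator subgroup, so $\nu$ factors through $H_1(X^{[2l]},\mathbb{Z})$, and one uses the known computation $H_1(X^{[2l]},\mathbb{Z}) \cong H_1(X,\mathbb{Z}) \oplus (\text{torsion/}\mathbb{Z}\text{-summand from }\sigma)$ for $2l \geq 2$: the Abel map sends the $H_1(X)$ summand isomorphically and the extra summand (generated by any $\sigma_i$, all conjugate hence equal in $H_1$) to $0$. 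Therefore $\ker\nu$ is the normal subgroup generated by $\sigma_1$, and since all transpositions in this geometric setting are conjugates of $\sigma_1$, the kernel is generated by transpositions of the points of $b_o$.

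The main obstacle I anticipate is being careful about torsion and about the precise meaning of ``transposition'' in $\pi_1(X^{[2l]})$ as opposed to in $S_{2l}$: a transposition here means a specific loop that swaps two of the $b_i$ along a chosen path and returns, and different choices of path differ by elements already in the handle-generated part, so one must check that the subgroup they generate is well-defined and really equals $\ker \nu$ rather than just being contained in it. Establishing the reverse inclusion $\ker\nu \subseteq \langle\text{transpositions}\rangle$ is the substantive point: it amounts to showing that if a braid becomes null-homologous in $H_1(X/D,\mathbb{Z})$ then, after multiplying by transpositions, it lies in the kernel of $\pi_1(X^{[2l]}) \to H_1(X,\mathbb{Z})$ and then that this kernel is generated by commutators which in turn can be written via transpositions and handle generators — here I would lean on the explicit braid-group presentation and the fact that $H_1$ of the configuration space is generated by the classes we have named, so any element of $\ker\nu$ is a product of $\sigma_i$'s and commutators, and commutators of surface generators are expressible through transposition-type loops. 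I expect this bookkeeping, rather than any deep geometric input, to be where the real work lies, and the statement as phrased (``generated by transpositions'') is exactly the clean output of this analysis.
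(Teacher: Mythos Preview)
Your approach is sound and genuinely different from what the paper does. The paper gives no argument at all: it simply cites Copeland (Theorem~7) and Baraglia--Schaposnik (Theorem~4.2) and then writes down the explicit construction of a swap loop $s_{\gamma_{ij}}$ along an embedded arc $\gamma_{ij}$. Your proposal, by contrast, sketches an actual proof via the surface braid group presentation and the abelianization $H_1(X^{[2l]},\mathbb{Z})$, which is a perfectly legitimate and arguably more transparent route.

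One point deserves sharpening. You correctly observe that $\nu$ factors through $H_1(X^{[2l]},\mathbb{Z})$ and kills the class of $\sigma$, but this alone only yields $\ker\nu = \langle\, [B_{2l}(\Sigma_g),B_{2l}(\Sigma_g)],\ \sigma_1\,\rangle$, not yet $\ker\nu = \langle\langle \sigma_1\rangle\rangle$. The missing ingredient is that the commutator subgroup is already contained in the normal closure of $\sigma_1$, i.e.\ that $B_{2l}(\Sigma_g)/\langle\langle\sigma_1\rangle\rangle$ is \emph{abelian}. You flag this in your last paragraph (``commutators of surface generators are expressible through transposition-type loops'') but label it bookkeeping. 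It is slightly more than that: it rests on a specific relation in the Bellingeri presentation of the surface braid group, namely $[a_r,\,\sigma_1^{-1} b_r \sigma_1^{-1}] = \sigma_1^{\pm 2}$ (together with the analogous relations for $r\ne s$), which upon setting $\sigma_1=1$ forces all $a_r,b_r$ to commute and collapses the quotient to $\mathbb{Z}^{2g}\cong H_1(X,\mathbb{Z})$. Once you invoke that relation explicitly, your argument closes, and the identification ``subgroup generated by all swaps $s_\gamma$'' $=$ ``normal closure of $\sigma_1$'' (which you also use) follows from the transitivity of the point-pushing action on embedded arcs. So your outline is correct; just be aware that the step you call bookkeeping is exactly where the surface topology enters, and it is the substantive content of the Copeland/Baraglia--Schaposnik argument the paper is citing.
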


\begin{proof}
The proof of the lemma is exactly the same as the one for Theorem 7 in \cite{Cope} and Theorem 4.2 in \cite{BarScha}. We give here only the construction of transpositions (swaps).\\
Recall that $b_o=b_1+b_2+\dots+b_{2l}$ is the fixed point in $X^{[2l]}$. Let $\gamma_{ij} :[0,1] \rightarrow \Sigma$ be an embedded path joining $b_i=\gamma(0)$ and $b_j=\gamma(1)$, where $i \neq j$ and $\gamma$ meets no other point of $b_o$. Let $D^2$  be the unit disc in $\mathbb{R}^2$ and $e : D^2 \rightarrow \Gamma$ an orientation preserving embedding such that $\gamma(t)=e(t-\frac{1}{2},0)$ and $e(D^2)$ contains no other points of $b_o$. Similarly, we define two other curves $\gamma^+_{ij}$ and $\gamma^-_{ij}$ by
\begin{align*}
\gamma^+_{ij}=e(t-\frac{1}{2},\text{sin}(\pi t)), \quad \gamma^-_{ij}=e(t-\frac{1}{2},-\text{sin}(\pi t)).
\end{align*}
Then we can define a loop $p_{\gamma_{ij}}$ based on $b_o$ as $p_{\gamma_{ij}}=\sum\limits_{1 \leq k \leq 2l}b_k(t)$, where $b_i(t)=\gamma^+_{ij}(t)$, $b_j(t)=\gamma^-_{ij}(t)$ and $b_k(t)=b_k$ for $k \neq i,j$, (see Figure 1). Clearly, the homotopy class $s_{\gamma_{ij}}=[p_{\gamma_{ij}}]$ in $X^{[2l]}$ does not depend on the choice of the loop. We call $s_{\gamma_{ij}}$ the \emph{swap} associated to $\gamma_{ij}$. The element of this form $p_{\gamma_{ij}}$ gives us a transposition of $b_i$ and $b_j$. The kernel of $\nu$ is generated by all $p_{\gamma}$ in this form.
\end{proof}

\vspace{5mm}
\begin{center}
  \includegraphics[width=0.6\linewidth,height=0.6\textheight,keepaspectratio]{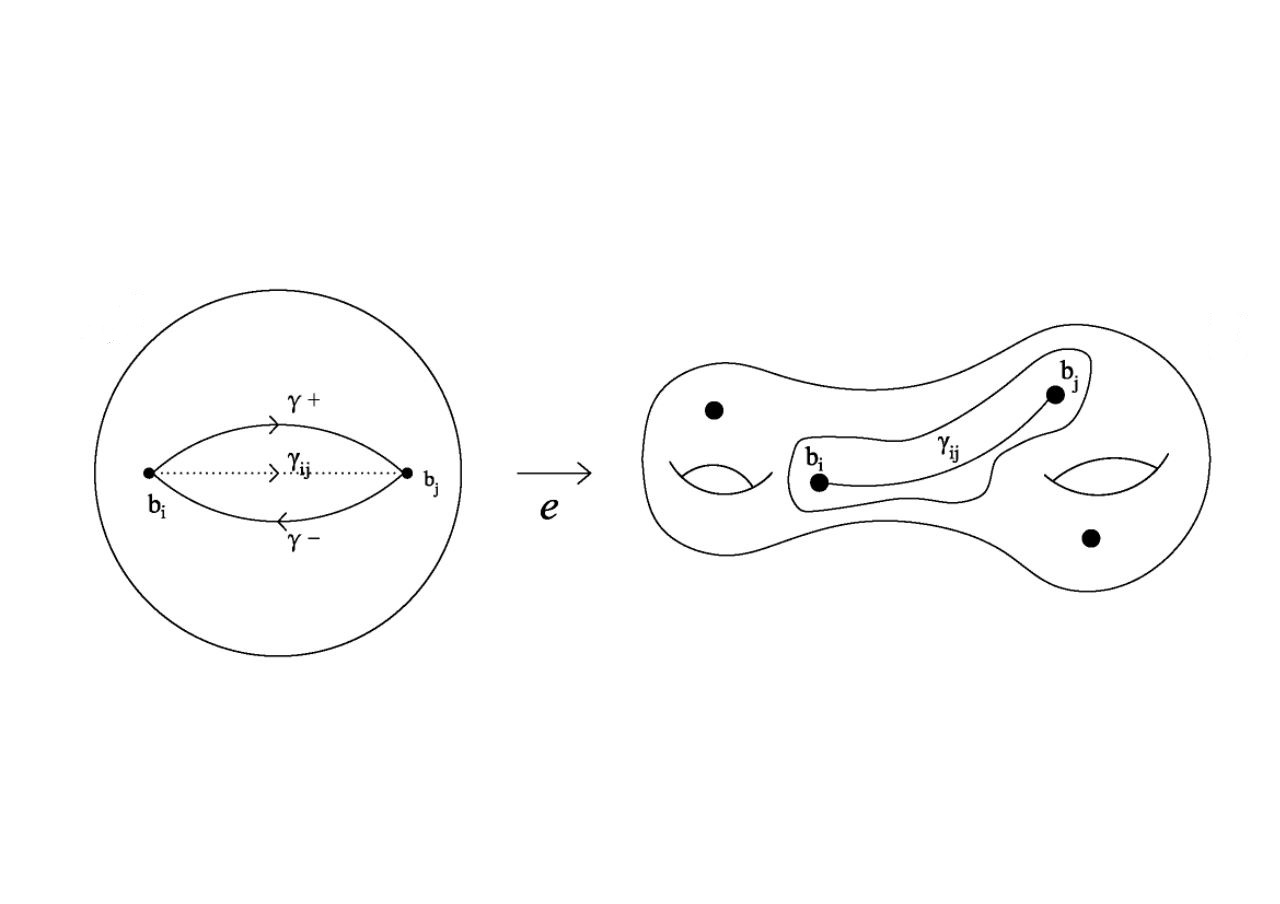}
   \captionof{figure}{A swap between $b_{i}$ and $b_{j}$ along the path $\gamma_{ij}$.}
\end{center}
\vspace{5mm}

Recall that the map $\rho : H_{reg} \rightarrow X^{[2l]}$ maps a regular section to its zero set. This map induces a map on the fundamental group
\begin{align*}
\rho_* : \pi_1(H_{reg}) \rightarrow \pi_1(X^{[2l]}).
\end{align*}
Hence, any transposition in $\pi_1(X^{[2l]})$ can be lifted to a transposition in $\pi_1(H_{reg})$. The following theorem implies that the image $\rho_*(\pi_1(H_{reg}))$ is generated by these transpositions.

\begin{thm}[Theorem 1.1, Theorem 1.2 in \cite{Walk}]\label{502}
When the genus $g \geq 2$, we have
\begin{align*}
\rho_* \pi_1(H_{reg})=\text{ker}(\nu),
\end{align*}
and the kernel is generated by transpositions of the points.
\end{thm}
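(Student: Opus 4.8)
The plan is to prove the equality $\rho_*\pi_1(H_{reg})=\ker\nu$ by two inclusions; the second assertion of the theorem is then just Lemma \ref{501}. The inclusion $\rho_*\pi_1(H_{reg})\subseteq\ker\nu$ is immediate: every $a\in H_{reg}$ is a section of $K(D)^2$, so $\mathcal{O}_X(\rho(a))\cong K(D)^2$ and hence $\mathcal{A}(\rho(a))=\mathcal{O}_X$ does not depend on $a$; thus $\mathcal{A}\circ\rho$ is constant, $(\mathcal{A}\circ\rho)_*=0$ on $\pi_1$, and since $\nu$ is $\mathcal{A}_*$ followed by $\pi_1(\mathrm{Jac}(X))\cong H_1(X,\mathbb{Z})\hookrightarrow H_1(X/D,\mathbb{Z})$, we get $\nu\circ\rho_*=0$.

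For the reverse inclusion I would proceed through a chain of identifications. Set $W:=\rho(H_{reg})\subseteq X^{[2l]}$, the locus of reduced divisors $D'$ with $\mathcal{O}_X(D')\cong K(D)^2$ and $D'\cap D=\emptyset$; this is a Zariski-open, hence connected, subset of the linear system $|K(D)^2|=\mathbb{P}\bigl(H^0(X,K(D)^2)\bigr)$, obtained from the discriminant-complement $F:=|K(D)^2|\setminus\{\text{non-reduced divisors}\}$ by deleting the $s$ hyperplane sections $H_i=\{D'\mid x_i\in\mathrm{Supp}(D')\}$ (these are genuine hyperplanes since $K(D)^2$ is base-point-free for $g\ge1$). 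Over $D'\in W$ the fibre of $\rho$ is $H^0(K(D)^2(-D'))\setminus0=H^0(\mathcal{O}_X)\setminus0\cong\mathbb{C}^*$, so $\rho\colon H_{reg}\to W$ is the restriction to $W$ of the tautological $\mathbb{C}^*$-bundle on $|K(D)^2|$; being a fibre bundle with connected fibre it is $\pi_1$-surjective, whence $\rho_*\pi_1(H_{reg})=\iota_*\pi_1(W)$ for $\iota\colon W\hookrightarrow X^{[2l]}$. Since $W$ is obtained from $F$ by removing finitely many complex hypersurfaces, i.e.\ subsets of real codimension two, general position gives $\pi_1(W)\twoheadrightarrow\pi_1(F)$, so $\iota_*\pi_1(W)=\iota_*\pi_1(F)$. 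Finally $F$ is the fibre of the Abel--Jacobi map over $[K(D)^2]$, and --- in the degree range $2l=2(2g-2+s)\ge 2g-1$ which holds for $g\ge2$ --- the homotopy long exact sequence of this map (using $\pi_2(\mathrm{Jac}(X))=0$ and connectedness of $F$) reads $0\to\pi_1(F)\to\pi_1(X^{[2l]})\to\pi_1(\mathrm{Jac}(X))\to0$, so $\iota_*\pi_1(F)=\ker\mathcal{A}_*$; and $\ker\mathcal{A}_*=\ker\nu$ because the post-composition defining $\nu$ is injective. Concatenating: $\rho_*\pi_1(H_{reg})=\iota_*\pi_1(W)=\iota_*\pi_1(F)=\ker\mathcal{A}_*=\ker\nu$.

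The step I expect to be the real obstacle is the last one, namely that the Abel--Jacobi map, restricted to the locus $X^{[2l]}$ of reduced divisors, is an honest fibration, so that its homotopy long exact sequence is available. Ehresmann's theorem does not apply directly (the map is not proper), and on the Chow compactification $\mathrm{Sym}^{2l}X\to\mathrm{Pic}^{2l}X$ --- which \emph{is} a $\mathbb{P}^{2l-g}$-bundle in this degree range --- the restriction to the deeper diagonal strata fails to be a submersion onto $\mathrm{Pic}^{2l}X$ for dimension reasons when $g\ge2$, so a naive stratified-isotopy argument does not suffice; this is exactly the issue settled in \cite{Cope} (combinatorially, over hyperelliptic curves) and in \cite{Walk} (for an arbitrary curve of genus $g\ge2$), and I would simply invoke it. Concretely it amounts to showing that each swap $s_{\gamma_{ij}}$ is homotopic in $X^{[2l]}$ to a loop of \emph{reduced} divisors in $|K(D)^2|$, which --- as one checks --- cannot keep the other $2l-2$ zeros fixed but must move all of them collectively, using the positivity of $K(D)$. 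The one feature genuinely new to the parabolic setting is the extra condition $D'\cap D=\emptyset$, and it costs nothing beyond the real-codimension-two remark $\pi_1(W)\twoheadrightarrow\pi_1(F)$; correspondingly, the replacement of $H_1(X,\mathbb{Z})$ by $H_1(X/D,\mathbb{Z})$ in the definition of $\nu$ is harmless precisely because that inclusion is injective.
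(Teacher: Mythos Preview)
Your proposal is correct and in fact more detailed than what the paper provides: the paper gives no proof of this statement at all, treating it as a direct citation of Walker's Theorems~1.1 and~1.2 and merely remarking that ``we shall need the version of Walker's theorem in our case.'' Your write-up does the work the paper omits. You establish the easy inclusion $\rho_*\pi_1(H_{reg})\subseteq\ker\nu$ from constancy of $\mathcal{A}\circ\rho$; you identify $\rho\colon H_{reg}\to W$ as a $\mathbb{C}^*$-bundle so that $\rho_*\pi_1(H_{reg})=\iota_*\pi_1(W)$; and, most importantly, you make explicit the reduction from the parabolic regular locus (which imposes $B\cap D=\emptyset$) to the non-parabolic one via the real-codimension-two observation $\pi_1(W)\twoheadrightarrow\pi_1(F)$. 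This last point is exactly the passage that justifies applying Walker's non-parabolic result in the present parabolic setting, and the paper leaves it implicit.

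Your self-diagnosis of the one genuine obstacle is also accurate: the equality $\iota_*\pi_1(F)=\ker\mathcal{A}_*$ would follow from a long exact sequence if $\mathcal{A}\colon X^{[2l]}\to\mathrm{Jac}(X)$ were a fibration, but the restriction of the $\mathbb{P}^{2l-g}$-bundle $\mathrm{Sym}^{2l}X\to\mathrm{Pic}^{2l}X$ to the reduced locus is not obviously one, and this is precisely the content of Walker's argument. Since the theorem you are asked to prove \emph{is} Walker's theorem (slightly extended), invoking Walker here is not circular but rather the correct division of labour: your contribution is the clean reduction of the parabolic statement to his non-parabolic one, together with the verification that the injection $H_1(X,\mathbb{Z})\hookrightarrow H_1(X/D,\mathbb{Z})$ makes $\ker\mathcal{A}_*=\ker\nu$.
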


J. Copeland in \cite{Cope} proved the above theorem in the case of hyperelliptic curves with quadratic differentials. K. Walker in \cite{Walk} generalized Copeland's idea and proved the result for any Riemann surface for both smooth fibers and singular fibers. We shall need the version of Walker's theorem in our case.

Although the image of the fundamental group $\rho_* \pi_1(H_{reg})$ is the kernel of $\nu$ and is generated by transpositions, the transpositions are not the only generators of the fundamental group $ \pi_1(H_{reg})$. Indeed, the fundamental group $\pi_1(H_{reg})$ is generated by both transpositions and the $\mathbb{C}^*$-action on the loop with respect to the point $a_o$.

\begin{defn}\label{503}
Let $\gamma : [0,1] \rightarrow H_{reg}$ be a loop in $H_{reg}$. We say that $\gamma$ is \emph{generated by the $\mathbb{C}^*$-action} with respect to a base point $a_0$, if $\gamma(t)=e^{2 \pi i t}a_0$.
\end{defn}

\begin{lem}[Proposition 4.4 in \cite{BarScha}]\label{504}
The fundamental group $\pi_1(H_{reg})$ is generated by the loop given by the $\mathbb{C}^*$-action on $H_{reg}$ together with transpositions.
\end{lem}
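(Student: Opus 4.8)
The plan is to show that the two types of loops already identified---the $\mathbb{C}^*$-action loop and the transpositions---together generate $\pi_1(H_{reg})$, by exploiting the fibration-type exact sequence coming from the map $\rho: H_{reg} \to X^{[2l]}$. First I would analyze the fibers of $\rho$. A point of $X^{[2l]}$ determines a reduced effective divisor $D'$ of degree $2l = \deg K(D)^2$, and $\rho^{-1}(D')$ consists of all sections of $K(D)^2$ with exactly that zero divisor; any two such sections differ by a nonvanishing scalar, so each nonempty fiber is a single $\mathbb{C}^*$-orbit, i.e. a copy of $\mathbb{C}^*$. (The fiber is nonempty precisely when $\mathcal{O}(D') \cong K(D)^2$, which is a codimension-$g$ condition, so $\rho$ is not surjective, but it is a fiber bundle onto its image with fiber $\mathbb{C}^*$.) The key point is that $\rho$ restricted to its image is a locally trivial $\mathbb{C}^*$-bundle, so there is an exact sequence of fundamental groups
\begin{align*}
\pi_1(\mathbb{C}^*) \longrightarrow \pi_1(H_{reg}) \xrightarrow{\ \rho_*\ } \pi_1(\rho(H_{reg})) \longrightarrow 1.
\end{align*}

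Next I would identify the pieces. The image of $\pi_1(\mathbb{C}^*) = \mathbb{Z}$ in $\pi_1(H_{reg})$ is exactly the subgroup generated by the $\mathbb{C}^*$-action loop of Definition 5.3, since a generator of $\pi_1$ of the fiber $\mathbb{C}^*$ over $a_o$ is precisely the loop $t \mapsto e^{2\pi i t}a_o$. Meanwhile, by Theorem 5.2 (Walker) the image $\rho_*\pi_1(H_{reg}) = \ker(\nu)$ is generated by transpositions (swaps) of the points of $b_o$, and each such transposition in $\pi_1(X^{[2l]})$, lying in the image of $\rho_*$, lifts to a loop in $\pi_1(H_{reg})$. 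Fix once and for all such lifts $\widetilde{s}_{\gamma_{ij}} \in \pi_1(H_{reg})$ of the generating swaps. Given any $\sigma \in \pi_1(H_{reg})$, its image $\rho_*(\sigma)$ is a word in the swaps, so $\sigma$ differs from the corresponding word in the $\widetilde{s}_{\gamma_{ij}}$ by an element of $\ker(\rho_*)$; but by exactness $\ker(\rho_*)$ is the image of $\pi_1(\mathbb{C}^*)$, which is generated by the $\mathbb{C}^*$-action loop. Hence $\sigma$ is a product of $\mathbb{C}^*$-action loops and (lifted) transpositions, which is the assertion.

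The main obstacle is establishing that $\rho$ is genuinely a locally trivial fiber bundle over its image---i.e. that the image $\rho(H_{reg})$ is an open subset of a smooth locally closed subvariety over which the $\mathbb{C}^*$-bundle structure is locally trivial---rather than just a surjection with $\mathbb{C}^*$-fibers. Here I would argue that $H_{reg}$ is (the restriction to a Zariski-open locus of) the total space of $\mathcal{O}_{X^{[2l]}}$-style line bundle minus its zero section: more precisely, consider the incidence variety $Z = \{(D', a) : a \in H^0(K(D)^2),\ \mathrm{div}(a) = D'\}$ with its projection to $X^{[2l]}$; away from the base locus this is a $\mathbb{C}^*$-bundle associated to a line bundle on the relevant subvariety of $X^{[2l]}$ (essentially $\mathcal{A}^{-1}$ of a point pulled back appropriately), and $H_{reg}$ is the open locus where $D'$ consists of distinct points disjoint from $D$. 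Once local triviality is in hand the homotopy exact sequence applies verbatim. A secondary subtlety, already handled in the excerpt, is that the connecting/lifting step uses that transpositions in $\pi_1(X^{[2l]})$ do lie in $\rho_*\pi_1(H_{reg})$; this is exactly the content of Theorem 5.2, which we are entitled to assume, so no further work is needed there. Note the hypothesis $g \geq 2$ is inherited through the use of Theorem 5.2.
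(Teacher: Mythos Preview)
The paper itself does not prove this lemma; it merely records the statement and cites Proposition~4.4 of \cite{BarScha}. Your proposal reconstructs the natural strategy---factor through the $\mathbb{C}^*$-quotient $H_{reg}\to\mathbb{P}H_{reg}\cong\rho(H_{reg})$, identify the fiber contribution with the $\mathbb{C}^*$-loop, and invoke Theorem~\ref{502} for the image---and this is the correct shape of the argument.

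There is, however, a slip in the last step. Your exact sequence has target $\pi_1(\rho(H_{reg}))=\pi_1(\mathbb{P}H_{reg})$, so exactness only tells you that $\ker\bigl(\pi_1(H_{reg})\to\pi_1(\mathbb{P}H_{reg})\bigr)$ is generated by the $\mathbb{C}^*$-loop. But the swaps $s_{\gamma_{ij}}$ are defined as loops in $X^{[2l]}$, and the swap path $t\mapsto b_o(t)$ does \emph{not} remain inside $\rho(H_{reg})$: moving two points of the divisor along generic paths destroys the linear equivalence $\mathcal{O}(D')\cong K(D)^2$. So when you write ``$\sigma$ differs from the corresponding word in the $\widetilde{s}_{\gamma_{ij}}$ by an element of $\ker(\rho_*)$,'' you are implicitly using $\rho_*$ as the composite map to $\pi_1(X^{[2l]})$, whose kernel could a priori be strictly larger than the image of $\pi_1(\mathbb{C}^*)$. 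The two kernels coincide only if the inclusion $\mathbb{P}H_{reg}\hookrightarrow X^{[2l]}$ is $\pi_1$-injective, and that is an additional statement you have not justified. The obstacle you flag (local triviality of the $\mathbb{C}^*$-bundle) is comparatively minor; the real missing ingredient is this injectivity. One way to close the gap is to work intrinsically in $\mathbb{P}H_{reg}$, which is the complement of the discriminant hypersurface in a projective space: its fundamental group is generated by meridians around the irreducible components of the discriminant, and one checks that the meridian around the generic component maps to a transposition under the embedding into $X^{[2l]}$.
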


\subsection{Lattice and Monodromy Action}
Let $M$ be the $V$-surface defined by the data $(X,D,2)$. Denote by $M_\eta \rightarrow M$ the spectral covering with respect to $\eta \in H_{reg}$. Let $X$ and $X_\eta$ be the underlying space of $M$ and $M_\eta$ respectively. Similarly, there is an induced spectral covering $\pi: X_\eta \rightarrow X$ over the underlying surface. Let $$D_\eta=\{x'_i,x''_i, 1 \leq i \leq s\}$$ the inverse image of the divisor $D \in X$, where $x'_i$ and $x''_i$ are the pre-images of $x_i$. Under the parabolic BNR correspondence \cite{KSZ2}, we know that the fiber is some subvariety of the $V$-Picard group $\text{Pic}_V(M_\eta)$.

For a surface $X$, we follow notation from \cite{BarScha}. Let $\Lambda_X=H^1(X,\mathbb{Z})$ be the lattice of $X$ and let $\Lambda_P$ be the lattice of the Prym variety defined as the kernel of $\Lambda_{X_\eta} \rightarrow \Lambda_{X}$. Similarly, we next define the lattices in terms of a $V$-surface. Define the lattice $\Lambda_S:=H^1_V(S,\mathbb{Z})$ to be the first $V$-cohomology of $S$, where $S$ is a $V$-surface. The covering map $\pi$ induces the push-forward map $\pi_*: \Lambda_{M_\eta} \rightarrow \Lambda_M$ and we define the lattice $\Lambda_{P_V}$ to be the kernel of $\pi_*:\Lambda_{M_\eta}\rightarrow \Lambda_M$. In other words, the lattice $\Lambda_{P_V}$ corresponds to the Prym variety $\text{Prym}(M_\eta,M)$ of the covering $\pi: M_\eta \rightarrow M$.

Let $S$ be a $V$-surface such that the local chart around $x \in D$ is isomorphic to $\mathbb{D}^2/ \mathbb{Z}_{m}$. We have the following presentations of fundamental groups:
\begin{align*}
& \pi_1(S)=\{a_1,\dots,a_g,b_1,\dots,b_g,c_1,\dots,c_s|[a_1,b_1]\dots[a_g,b_g]c_1\dots c_s=1, c_i^{m}=1\},\\
& \pi_1(|S|/D)=\{a_1,\dots,a_g,b_1,\dots,b_g,c_1,\dots,c_s|[a_1,b_1]\dots[a_g,b_g]c_1\dots c_g=1\},
\end{align*}
where $|S|$ is the underlying surface of the $V$-surface $S$ and $D$ is the fixed divisor. There is a natural surjection $\pi_1(|S|\backslash D) \rightarrow \pi_1(S)$, which induces the following map in cohomology
\begin{align*}
H^1_V(S,\mathbb{Z}) \longrightarrow H^1(|S| \backslash D,\mathbb{Z}).
\end{align*}
If $m=2$, these two cohomology groups are isomorphic in $\mathbb{Z}_2$-coefficients. In other words, the $2$-torsion points of the lattice $\Lambda_S$ is isomorphic to the $2$-torsion points of the lattice $\Lambda_{|S|\backslash D}$. Therefore, we can consider $\Lambda_S$ as the lattice for the noncompact surface $|S| \backslash D$ in this special case. This property gives us a way to study the monodromy action in the case of $V$-surfaces. In general, if $m \neq 2$, this property is not apparent.

\vspace{5mm}
\begin{center}
 \includegraphics[width=0.6\linewidth,height=0.6\textheight,keepaspectratio]{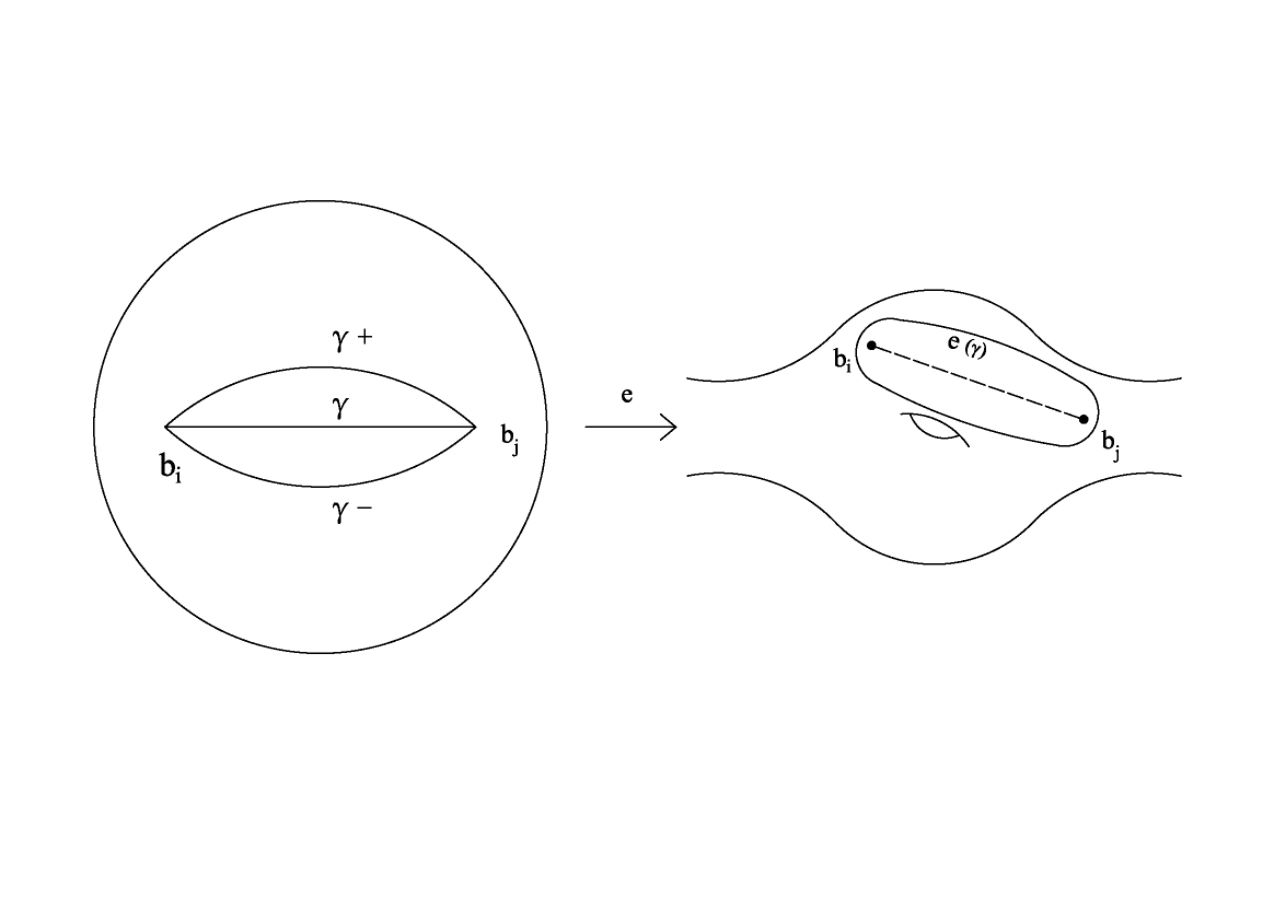}
   \captionof{figure}{A swap between $b_{i}$ and $b_{j}$ along the path $\gamma_{ij}$ for a surface with boundary components.}
\end{center}
\vspace{5mm}

\begin{thm}\label{505}
Let $l_\gamma$ be the embedded loop in $M_\eta \backslash D_\eta$ given by the preimage $\pi^{-1}(\gamma)$ of $\gamma$. The monodromy action $\rho_{*}(s_\gamma)$ gives an automorphism of $H^1(M_\eta \backslash D_\eta,\mathbb{Z})$, which is induced by a Dehn twist of $X_\eta$ around $l_\gamma$.
\end{thm}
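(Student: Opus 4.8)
The plan is to prove the statement by localizing the monodromy in a neighborhood of the swap path and reducing to the classical model of a double cover of a disk branched at two points. Since $\eta\in H_{reg}$, the branch locus $B=\{b_1,\dots,b_{2l}\}$ of $\pi\colon X_\eta\to X$ is disjoint from the divisor $D$; therefore the embedded path $\gamma=\gamma_{ij}$ and the embedded disk $U:=e(D^2)\subset X$ used to build the swap $s_\gamma$ in Lemma~\ref{501} may be chosen so that $\overline U\cap D=\emptyset$. Over $U$ the $V$-structure of $M$ is trivial, so $\pi^{-1}(U)$ is an honest open subsurface of $X_\eta$ contained in $M_\eta\setminus D_\eta$: it is the connected double cover of a disk branched at the two points $b_i,b_j$, hence an annulus $A$, and its core circle is isotopic to $l_\gamma=\pi^{-1}(\gamma)$ (the preimage of an arc joining two branch points closes up into a single embedded loop, since the two sheets are interchanged along $\gamma$).

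First I would realize $s_\gamma$ by an explicit isotopy of $X$ supported in $U$, namely the loop $p_{\gamma_{ij}}(t)=\sum_k b_k(t)$ from the proof of Lemma~\ref{501}, which moves only $b_i,b_j$ along $\gamma^{\pm}_{ij}$ and fixes all other branch points and all of $D$. By Theorem~\ref{502} this loop lifts through $\rho_*$ to a loop in $\pi_1(H_{reg})$, which determines a family of spectral $V$-surfaces over $S^1$; away from $\pi^{-1}(U)$ the branch data, the covering, and the $V$-structure are all constant along the family, so the resulting mapping class of $X_\eta$ is supported inside $A$. On $A$ the monodromy is computed by the standard local model $w^2=(z-b_i(t))(z-b_j(t))$ of the one-parameter family of double covers of $U$ in which $b_i,b_j$ perform a half-twist about one another: as in the proofs of Theorem~7 of \cite{Cope}, Theorem~4.2 of \cite{BarScha} and the hyperelliptic branched-cover correspondence, this half-twist braid lifts to a single Dehn twist about the circle covering the connecting arc, here $l_\gamma$. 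Gluing this local picture to the identity on $X_\eta\setminus A$ gives a diffeomorphism of $X_\eta$ equal to the Dehn twist about $l_\gamma$; since $A\cap D_\eta=\emptyset$ it restricts to $M_\eta\setminus D_\eta$ and induces the claimed automorphism $\rho_*(s_\gamma)$ of $H^1(M_\eta\setminus D_\eta,\mathbb{Z})$.

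The principal obstacle is the monodromy computation on the annulus: one must check that the half-twist of the two branch points yields \emph{exactly one} Dehn twist about $l_\gamma$, with the correct orientation and not a square or its inverse; this is cleanest with the explicit local model above together with the Birman--Hilden correspondence between the hyperelliptic mapping class group of the double cover of the disk and the braid group on $\{b_i,b_j\}$. A secondary point is the compatibility of the chosen lift of $s_\gamma$ to $\pi_1(H_{reg})$ with the local picture: by Lemma~\ref{504} the ambiguity in such a lift is carried by the $\mathbb{C}^{*}$-loop, and one should either use the lift supplied by Walker's theorem and verify directly that it is supported over $U$, or observe that the $\mathbb{C}^{*}$-loop's contribution to the action on $H^1$ of the fiber is independent of, and commutes with, the swap contribution; either way it does not affect the conclusion. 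The remaining ingredients — connectedness of $\pi^{-1}(U)$, the identification $\pi^{-1}(\gamma)\simeq S^1$, triviality of the family outside $U$, and the fact that the glued diffeomorphism preserves $D_\eta$ — are routine once the hypothesis $B\cap D=\emptyset$ is in force.
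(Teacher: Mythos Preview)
Your proposal is correct and follows essentially the same strategy as the paper: localize the swap in an embedded disk, observe that the family of branched double covers is constant outside, and identify the monodromy inside with a Dehn twist about $l_\gamma$ via the standard local model / Picard--Lefschetz theory. The paper's proof is terser, citing Picard--Lefschetz directly and referring to the analogous non-parabolic argument in \cite{BarScha}, whereas you spell out the annulus picture and the Birman--Hilden half-twist explicitly; the only cosmetic difference is that the paper takes a single large disk $e(D^2)$ containing \emph{all} branch points and punctures, while you take a small disk containing only $b_i,b_j$ and disjoint from $D$, which makes the ``preimage is an annulus with core $l_\gamma$'' step more transparent. Your discussion of the lifting ambiguity through $\rho_*$ and the $\mathbb{C}^*$-loop is more careful than the paper's, which simply uses the lift without comment.
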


\begin{proof}
The proof of this theorem is similar to the proof for Theorem 4.8 in \cite{BarScha}. The only difference is that we are working with the $V$-surface, in other words, a non-compact surface.\\
By construction of the $V$-surface $M_\eta$, a loop $\gamma$ on $M \backslash D$ is also a loop on its underlying surface $X$ with $s$-many punctures. Hence, we reduce the structure of the $V$-surface $M_\eta$ to a non-compact surface $X_\eta \backslash D_\eta$.\\
Let $\gamma=\gamma_{ij}$ for some $1 \leq i,j \leq 2l$. Recall that $\gamma_{ij}$ is an embedded path in $X \backslash D$ joining branch points $b_i,b_j$ and avoiding all other branch points and punctures. The transposition $p_{\gamma_{ij}}(t)$ associated to $\gamma_{ij}$ provides a loop $b_{\gamma_{ij}}(t)$ based at $b_o$. We choose an embedding $e: D^2 \rightarrow X$ of the unit disc $D^2$ into $X$ containing all branch points $b_1,\dots,b_{2l}$, all punctures and the loop $b_{\gamma{ij}}$. Let $\pi_t : X_t \rightarrow X$, $t \in[0,1]$ be the resulting family of branched double coverings of $X$. Since we make no change to the double cover outside the image $e(D^2)$, we only have to understand the family $S_t|_{\pi^{-1}_t (e(D^2))}$. It is well-known from Picard-Lefschetz theory \cite{DHM} that the monodromy is given by a Dehn twist of $S_t|_{\pi^{-1}_t (e(D^2))}$ around the cycle $l_{\gamma_{ij}}$.\\
More precisely, let $[h_\gamma]$ be the associated homology class in $H_1(M_\eta \backslash D_{\eta},\mathbb{Z})$. Clearly, $\pi_*([h_\gamma])=0$. Let $c_\gamma \in H^1(M_\eta \backslash D_\eta,\mathbb{Z})$ be its Poincar\'{e} dual class. Then, $c_\gamma \in \Lambda_P$. The Dehn twist of $X_\eta$ around $h_\gamma$ acts on $H^1(X_\eta \backslash D_\eta,\mathbb{Z})$ as a Picard-Lefschetz transformation. Thus the monodromy action of the loop associated to $\gamma$ is
\begin{align*}
\gamma \cdot x= \rho_*(s_\gamma) x = x+\langle c_\gamma, x \rangle c_\gamma.
\end{align*}
This completes the proof of the theorem.
\end{proof}

\subsection{Monodromy Action on $2$-Torsion Points on Lattices}
For a closed Riemann surface $X$, let $\Lambda_X[2]$ be the set of $2$-torsion points in $\Lambda_X$. An element in the Prym variety $\Lambda_P [2]$ of the covering $X_\eta \rightarrow X$ corresponds to a line bundle $L$ on $X_\eta$ such that $L^2=\mathcal{O}$ and $\tau^* L \cong L^{-1}$, where $\tau : X_\eta \rightarrow X_\eta$ is the involution map. We define the lattice $\widetilde{\Lambda}_P[2]$ to be the set of pairs $(L,\widetilde{\tau})$ such that $L \in \text{Prym}(X_\eta,X)$ satisfies $L^2 \cong \mathcal{O}$ and $\widetilde{\tau}:L \rightarrow L$ is an involution of the line bundle with respect to the involution of the spectral covering $\tau$. The lattice $\widetilde{\Lambda}_P[2]$ can be considered (see \cite{BarScha} for details) as a $\mathbb{Z}_2$-extension of $\Lambda_P[2]$
\begin{align*}
0 \longrightarrow \mathbb{Z}_2 \longrightarrow \widetilde{\Lambda}_{P}[2] \longrightarrow \Lambda_{P}[2] \longrightarrow 0.
\end{align*}

Now we move to the case of a $V$-surface. Let $\Lambda_{S}[2]=\Lambda_S \otimes_{\mathbb{Z}}\mathbb{Z}_2$ be the set of $2$-torsion points in $\Lambda_S$, where $S$ is a $V$-surface. Similarly, $\Lambda_{P_V}[2]$ is the $2$-torsion points of the Prym variety $\Lambda_{P_V}$ of the covering $\pi:M_\eta \rightarrow M$, which consists line bundles $L \in \text{Jac}(M_\eta)$ satisfying $L^2 \cong \mathcal{O}$ and $\tau^*(L) \cong L^{-1}$, where $\tau:M_\eta \rightarrow M_\eta$ is the involution of the $V$-surface $M_\eta$. We use the same notation $\tau$ for the involutions of $X_\eta$ and $M_\eta$. Similarly, The lattice $\widetilde{\Lambda}_{P_V}[2]$ is defined as the pairs $(L,\widetilde{\tau})$ such that $L$ is a $2$-torsion point in $\Lambda_{P_V}$ and $\widetilde{\tau}$ is an involution of $L$ with respect to the involution $\tau: M_\eta \rightarrow M$ of the covering. The lattice $\widetilde{\Lambda}_{P_V}[2]$ can be then considered as a $\mathbb{Z}_2$-extension of $\Lambda_{P_V}[2]$
\begin{align*}
0 \longrightarrow \mathbb{Z}_2 \longrightarrow \widetilde{\Lambda}_{P_V}[2] \longrightarrow \Lambda_{P_V}[2] \longrightarrow 0.
\end{align*}

Recall that $l$ denotes the degree of $K(D)$. Therefore the covering has $2l$ many branch points (ramification points) and denote by $B=\{b_1,...,b_{2l}\}$ the set of branch points. Since there is a one-to-one correspondence between branch points and ramification points, we keep the same notation $B$ for the ramifications. Note that $B \cap D = \emptyset$ by the definition of regular locus and also $B \cap D_\eta = \emptyset$.

Now let us consider the line bundle $L$ over $M_\eta$ as a $\mathbb{Z}_2$-local system together with an isomorphism $\tau^*: L \rightarrow L$. If $b_{i}$ is the ramification point, then $\tau^*$ maps $L_{b_i}$ to itself, acting either as $1$ or $-1$. Let $\varepsilon$ be the number such that $\tau^*$ acts on $L_{b_i}$ by $(-1)^{\varepsilon}$. Repeating this procedure for points $x \in D_\eta$, then the pair $(L,\widetilde{\tau})$ gives us an element
\begin{align*}
\varepsilon(L,\widetilde{\tau}^*)=\sum\limits_{i=1}^{l}\varepsilon_i b_i + \sum\limits_{i=1}^{l}\varepsilon'_i (x'_i+x''_i) \in \mathbb{Z}_2 (B+D_\eta).
\end{align*}
Note that the coefficients of $x'_i$ and $x''_i$ are the same by the condition $\tau^*L=L$. The restriction of $L$ to $(M_\eta \backslash D_\eta) \backslash B$ descends to a local system $L'$ on $(M \backslash D) \backslash B$. Clearly, $$\varepsilon(L,-\widetilde{\tau})=\varepsilon(L,\widetilde{\tau})+b_o+x_o,$$ where $b_o=b_1+\dots+b_{2l}$ and $x_0=x_1+\dots+x_s$. This implies a map
\begin{align*}
\varepsilon: \Lambda_{P_V}[2] \longrightarrow (\mathbb{Z}_2 B)^{ev},
\end{align*}
where $\mathbb{Z}_2 B$ is the $\mathbb{Z}_2$-coefficient vector space with basis $B=\{b_1,\dots, b_l\}$ and $(\mathbb{Z}_2 B)^{ev}$ denotes the subspace of $\mathbb{Z}_2 B$ with elements $\sum\limits_{i=1}^l \epsilon_i b_i$ such that $\sum\limits_{i=1}^l \epsilon_i \text{ mod } 2 =0$.

\begin{lem}\label{506}
The following three sequences for the lattice $\Lambda_{P_V}$ are exact:
\begin{align*}
& 0 \longrightarrow \underbrace{\Lambda_{M}[2]}_{2g+s-1} \longrightarrow \underbrace{\Lambda_{P_V} [2]}_{(2g+s-1)+(2l-1)} \xrightarrow{\varepsilon}  (\mathbb{Z}_2 B)^{ev} \longrightarrow 0,\\
& 0 \longrightarrow \underbrace{\Lambda_{P}[2]}_{2g-2+2l} \longrightarrow \underbrace{\Lambda_{P_V} [2]}_{(2g-2+2l)+s} \rightarrow  \bigoplus_{i=1}^s \mathbb{Z}_2 \longrightarrow 0,\\
& 0 \longrightarrow \underbrace{\Lambda_{M}[2]}_{2g+s-1} \longrightarrow \underbrace{\Lambda_{M_\eta} [2]}_{2g_\eta +2s-1} \longrightarrow  \underbrace{\Lambda_{P_V}[2]}_{2g-2+s+2l} \rightarrow 0,
\end{align*}
where the number below each lattice resembles its rank and $g_\eta=2g-1+l$ is the genus of the underlying surface $X_\eta$.
\end{lem}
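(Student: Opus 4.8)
The plan is to recast the whole statement in terms of $\mathbb{Z}_2$-cohomology of the \emph{open} surfaces $X\setminus D$ and $X_\eta\setminus D_\eta$, where the computations reduce to elementary topology of the branched double cover $\pi:X_\eta\setminus D_\eta\to X\setminus D$. By the identification recalled in \S 5.2 (valid because $m=2$) one has $\Lambda_M[2]\cong H^1(X\setminus D,\mathbb{Z}_2)$ and $\Lambda_{M_\eta}[2]\cong H^1(X_\eta\setminus D_\eta,\mathbb{Z}_2)$, and under these identifications $\Lambda_{P_V}[2]$ is the ``Prym part'' of $H^1(X_\eta\setminus D_\eta,\mathbb{Z}_2)$ attached to $\pi$ (with $\Lambda_P[2]$ its analogue for the compact covering $X_\eta\to X$ from \cite{BarScha}). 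All groups in play are finite-dimensional $\mathbb{Z}_2$-vector spaces, so I would first dispatch the rank assertions via Euler characteristics: $\dim_{\mathbb{Z}_2}H^1(\Sigma\setminus F,\mathbb{Z}_2)=2g_\Sigma-1+|F|$ for a closed connected surface $\Sigma$ of genus $g_\Sigma$ and nonempty finite $F$, applied to $(X,D)$ and $(X_\eta,D_\eta)$ with $g_{X_\eta}=4g-3+s=2g-1+l$ and $|D_\eta|=2s$, together with $\chi(X_\eta)=2\chi(X)-2l$ for the $2l$ branch points; since the three rank identities in the statement are mutually consistent, only one of them is an independent check.

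The heart is the \textbf{first} sequence. Its left-hand map is $\pi^*$ and $\varepsilon$ is exactly the invariant defined just above the lemma (using the normalization $L=L_0\otimes\pi^*K(D)^{\frac12}$ of \S 3.3 to fix the sign ambiguity of $\widetilde\tau$, so that the total sign is constrained and $\varepsilon$ lands in the even subspace). I would establish three things. First, $\pi^*$ is injective: deleting the branch locus $B$ downstairs and its preimage $\pi^{-1}B$ upstairs makes $\pi$ an \emph{unramified} double cover, whose Gysin sequence shows $\ker\bigl(\pi^*:H^1(X\setminus(D\cup B),\mathbb{Z}_2)\to H^1(X_\eta\setminus(D_\eta\cup\pi^{-1}B),\mathbb{Z}_2)\bigr)$ is spanned by the class $w$ classifying the cover; but $w$ has nontrivial monodromy around every branch point, hence lies outside the (injective) image of $H^1(X\setminus D,\mathbb{Z}_2)$, so $\pi^*$ is injective on $\Lambda_M[2]$. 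Second, $\ker\varepsilon=\mathrm{im}\,\pi^*$ by descent: $\varepsilon(\pi^*c)=0$ since a pulled-back local system carries the obvious trivial $\widetilde\tau$ on all ramification fibres, and conversely $\varepsilon(L,\widetilde\tau)=0$ forces $(L,\widetilde\tau)$ to descend to $X\setminus D$. Third, $\varepsilon$ is onto $(\mathbb{Z}_2 B)^{ev}$: following \cite{BarScha}, for an embedded arc in $X\setminus D$ joining two branch points $b_i,b_j$ and avoiding all others and all of $D$, a ``half-twist'' local system along a tubular neighbourhood of the arc produces a Prym $2$-torsion bundle with $\varepsilon=b_i+b_j$, and such classes generate $(\mathbb{Z}_2 B)^{ev}$.

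The \textbf{third} sequence I would deduce from the first: again $\pi^*:\Lambda_M[2]\to\Lambda_{M_\eta}[2]$ is injective by the same branch-point argument (the relevant nontrivial monodromy being now around the ramification points of $X_\eta\setminus D_\eta$), its cokernel has the dimension of $\Lambda_{P_V}[2]$ by the rank bookkeeping, and one identifies the cokernel with $\Lambda_{P_V}[2]$ through the map $\Lambda_{M_\eta}[2]\xrightarrow{\,1+\tau^*\,}\ker\pi_*=\Lambda_{P_V}[2]$, whose kernel is the $\tau^*$-invariant subgroup and, by the descent analysis of the first sequence (accounting for the $\varepsilon$-data), equals $\pi^*\Lambda_M[2]$, and whose image is all of $\Lambda_{P_V}[2]$ by a dimension count. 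The \textbf{second} sequence is then formal: the surjection $\Lambda_{P_V}[2]\to\bigoplus_{i=1}^s\mathbb{Z}_2$ sends a class to its monodromies around the pairs $\{x_i',x_i''\}\subset D_\eta$ (which coincide by $\tau$-invariance); its kernel consists of classes extending over $D_\eta$, i.e.\ Prym $2$-torsion classes on the compact $X_\eta$, which is $\Lambda_P[2]$ by \cite{BarScha}; surjectivity is immediate since $\pi^*\Lambda_M[2]\subseteq\Lambda_{P_V}[2]$ already maps onto $\bigoplus_{i=1}^s\mathbb{Z}_2$.

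I expect the main obstacle to be the bookkeeping forced by $\mathbb{Z}_2$-coefficients, where the $\pm 1$ eigenspaces of $\tau^*$ collapse: a $\tau^*$-invariant $2$-torsion class on $X_\eta\setminus D_\eta$ is \emph{not} in general a pullback from $X\setminus D$, the obstruction being precisely $\varepsilon$; and moreover the lattices $\Lambda_M,\Lambda_{M_\eta},\Lambda_{P_V}$ themselves carry $2$-torsion coming from the marked points, which feeds into $\Lambda_S[2]=\Lambda_S\otimes_{\mathbb{Z}}\mathbb{Z}_2$. Keeping the four relevant subgroups --- $\pi^*\Lambda_M[2]$, the $\tau^*$-invariants, $\ker\pi_*$, and $\mathrm{im}(1+\tau^*)$ --- apart, verifying that the identifications of \S 5.2 intertwine $\pi^*$, $\pi_*$ and $\tau^*$ with the corresponding topological operations, and proving the two surjectivity statements (for $\varepsilon$ and for the residue map) by explicit local constructions rather than by diagram-chasing, is where the genuine work lies; once these are settled, exactness of all three sequences and the rank counts follow.
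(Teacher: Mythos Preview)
Your overall strategy---recasting the $V$-lattices as $\mathbb{Z}_2$-cohomology of the open surfaces $X\setminus D$ and $X_\eta\setminus D_\eta$ and running the Prym/local-system analysis there---is exactly the framework the paper uses (see the paragraph in \S 5.2 identifying $\Lambda_S[2]$ with $\Lambda_{|S|\setminus D}[2]$ when $m=2$). Your treatment of the first sequence is a fleshed-out version of what the paper means by ``follows from the discussion for the local system above'', and your argument for the third via $1+\tau^*$ is in the spirit of the Baraglia--Schaposnik propositions the paper cites. So the route is the same; you are simply supplying details the paper omits.

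There is, however, a genuine gap in your surjectivity argument for the \emph{second} sequence. You claim that $\pi^*\Lambda_M[2]\subseteq\Lambda_{P_V}[2]$ already surjects onto $\bigoplus_{i=1}^s\mathbb{Z}_2$ under the residue map. It does not: a class $c\in\Lambda_M[2]\cong H^1(X\setminus D,\mathbb{Z}_2)$ has puncture monodromies $(\epsilon_1,\ldots,\epsilon_s)$ subject to the relation $\sum_i\epsilon_i=0$ coming from $c_1\cdots c_s=1$ in $\pi_1^V(M)$, so the image of $\pi^*\Lambda_M[2]$ is only the \emph{even} hyperplane $(\bigoplus_i\mathbb{Z}_2)^{ev}$, of rank $s-1$. (This is why Remark~\ref{507} writes $\Lambda_M[2]\cong(\bigoplus_{i=1}^{s-1}\mathbb{Z}_2)\oplus\Lambda_X[2]$.) You are missing one dimension. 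The paper sidesteps this by deriving the second sequence directly from the $V$-Picard sequences
\[
0\to\mathrm{Pic}(X)\to\mathrm{Pic}_V(M)\to\bigoplus_{i=1}^s\mathbb{Z}_2\to 0,\qquad 0\to\mathrm{Pic}(X_\eta)\to\mathrm{Pic}_V(M_\eta)\to\bigoplus_{i=1}^{2s}\mathbb{Z}_2\to 0,
\]
together with the Prym sequence of \S 3.4, where surjectivity is built in. In your setup the fix is easy: either invoke the rank count you already did, or observe that the extra generator comes from a class in $(\mathbb{Z}_2B)^{ev}$ with an \emph{odd} number of branch points on each sheet near a single $x_i$---but you should say so explicitly rather than appeal to $\pi^*\Lambda_M[2]$.
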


\begin{proof}
The genus of the spectral curve $X_\eta$ was computed in \S 3.1 to be $g_\eta=4g-3+s$. The first exact sequence follows from the discussion for the local system above. The second exact sequence comes from the exact sequences of the $V$-Picard group
\begin{align*}
& 0 \longrightarrow \text{Pic}(X) \longrightarrow \text{Pic}_V(M) \longrightarrow \bigoplus_{i=1}^s \mathbb{Z}_2 \longrightarrow 0,\\
& 0 \longrightarrow \text{Pic}(X_\eta) \longrightarrow \text{Pic}_V(M_\eta) \longrightarrow \bigoplus_{i=1}^{2s} \mathbb{Z}_2 \longrightarrow 0.\\
\end{align*}
Details can be found in \S 7 of \cite{KSZ2}. The proof of the third exact sequence is similar to Proposition 4.12 and 4.15 in \cite{BarScha}.
\end{proof}

\begin{rem}\label{507}
Based on the first exact sequence and the discussion of the local system above, we have the following decomposition of $\Lambda_M[2]$
\[\Lambda_{M}[2] \cong \left(\bigoplus_{i=1}^{s-1} \mathbb{Z}_2 \right) \oplus \Lambda_X[2]\]
and the following decomposition of $\Lambda_{P_V}[2]$
\begin{align*}
& \Lambda_{P_V}[2] \cong \left(\bigoplus_{i=1}^{s-1} \mathbb{Z}_2 \right) \oplus \Lambda_X[2] \oplus  (\mathbb{Z}_2 B)^{ev}.
\end{align*}
\end{rem}

\begin{lem}\label{508}
Let $M$ be the $V$-surface defined by the data $(X, D, 2)$. Let $\gamma$ be a path joining distinct branch points $b_i,b_j$ in the covering $M$ and $c_\gamma \in \Lambda_{P_V} [2]$ the corresponding cycle in $M_\eta$. Then,
\begin{align*}
\varepsilon(c_\gamma)=b_i+b_j.
\end{align*}
Conversely if $c$ is any element of $\Lambda_{P_V} [2]$ such that $\varepsilon(c)=b_i+b_j$, then there exists an embedded path $\gamma$ from $b_i$ to $b_j$ for which $c=c_\gamma$.
\end{lem}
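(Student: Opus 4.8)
The plan is to transplant the proof of Proposition 4.12 of \cite{BarScha} to the $V$-surface setting; the one genuinely new feature is the divisor $D$, over which the covering $\pi\colon M_\eta\to M$ is unramified by the definition of $H_{reg}$.

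\emph{First assertion.} Recall from the proof of Theorem \ref{505} that $h_\gamma=\pi^{-1}(\gamma)\subset M_\eta$ is an embedded circle passing through the two ramification points over $b_i$ and $b_j$, meeting no other ramification point and disjoint from $D_\eta$, and that $c_\gamma=\mathrm{PD}[h_\gamma]\in\Lambda_{P_V}[2]$ is the (nonzero) associated $2$-torsion class. Let $(L_\gamma,\widetilde\tau)$ be the corresponding square-trivial line $V$-bundle with involution; restricting it to $(M_\eta\setminus D_\eta)\setminus B$ and pushing down by $\widetilde\tau$ produces a $\mathbb{Z}_2$-local system $L_\gamma'$ on $(M\setminus D)\setminus B$, and by the definition of $\varepsilon$ one has $\varepsilon_k=1$ exactly when $L_\gamma'$ has $-1$ monodromy around $b_k$. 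This can be read off in the local model of the double cover at a branch point ($\Delta\ni 0$ below, $\widetilde\Delta\to\Delta$ given by $w\mapsto w^2$, $\tau\colon w\mapsto -w$ above), in which a local section of $L_\gamma'$ is a $\widetilde\tau$-equivariant section of $L_\gamma$, hence an even or an odd function of $w$ according as $\widetilde\tau$ acts on $L_{\gamma,b_k}$ by $+1$ or $-1$. Choosing $\gamma$ to avoid small discs about each $b_k$ with $k\neq i,j$ and about each $x\in D$, the circle $h_\gamma$ misses those discs, $L_\gamma$ is trivial there, and one gets $\varepsilon_k=0$ for $k\neq i,j$ together with the vanishing of $\varepsilon$ in the $D_\eta$-directions; the remaining values $\varepsilon_i,\varepsilon_j$ are pinned down, as in \cite{BarScha}, by the parity constraint $\varepsilon(c_\gamma)\in(\mathbb{Z}_2 B)^{ev}$ together with the fact that $c_\gamma$ does not lie in $\ker\varepsilon$, forcing $\varepsilon_i=\varepsilon_j=1$. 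Hence $\varepsilon(c_\gamma)=b_i+b_j$.

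\emph{Converse.} Fix an embedded path $\gamma_0$ from $b_i$ to $b_j$ avoiding $D$ and $B\setminus\{b_i,b_j\}$. By the first exact sequence of Lemma \ref{506} and Remark \ref{507}, $\ker\varepsilon$ is the image of the pullback $\pi^*\colon\Lambda_M[2]\to\Lambda_{P_V}[2]$, with $\Lambda_M[2]\cong(\bigoplus_{i=1}^{s-1}\mathbb{Z}_2)\oplus\Lambda_X[2]$. Given $c$ with $\varepsilon(c)=b_i+b_j$, the first assertion gives $\varepsilon(c+c_{\gamma_0})=0$, so $c+c_{\gamma_0}=\pi^*(d)$ for some $d\in\Lambda_M[2]$. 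One then produces the required path: a finger move of $\gamma_0$ along a loop $\ell\subset X\setminus(D\cup B)$, followed by a small perturbation, yields an embedded arc $\gamma$ from $b_i$ to $b_j$ with $\gamma+\gamma_0=\ell$ as $\mathbb{Z}_2$-chains, and if $\ell$ is \emph{even} (trivial double-cover monodromy) then $\pi^{-1}(\ell)=\pi^{-1}(\gamma)+\pi^{-1}(\gamma_0)$ is a $\tau$-invariant cycle and $c_\gamma+c_{\gamma_0}=\pi^*\mathrm{PD}_X[\ell]$. It remains to check that the classes $\pi^*\mathrm{PD}_X[\ell]$, for $\ell$ ranging over even loops, exhaust $\ker\varepsilon=\pi^*\Lambda_M[2]$: the handle part $\Lambda_X[2]$ is realised by handle curves chosen to avoid $B$, which are automatically even, and the remaining $s-1$ generators of $\Lambda_M[2]$ are realised by the loops around the punctures $x\in D$, which are even precisely because $B\cap D=\emptyset$. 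Choosing $\ell$ with $\pi^*\mathrm{PD}_X[\ell]=\pi^*(d)$ then gives $c_\gamma=c$.

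\emph{Main obstacle.} The delicate step in the first assertion is the sign bookkeeping — relating $\varepsilon_k$ to the monodromy of the descended local system $L_\gamma'$ while keeping track of the \emph{global} involution $\widetilde\tau$, and ruling out $\varepsilon_i=\varepsilon_j=0$ — which one imports from \cite{BarScha}. In the converse the crux is the spanning statement, and in particular that the $s-1$ puncture classes of $\Lambda_M[2]$ are hit; this is exactly where the hypothesis $B\cap D=\emptyset$ built into the definition of $H_{reg}$ enters essentially, and it is where the parabolic argument genuinely departs from the nonparabolic one.
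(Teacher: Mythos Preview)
Your proposal is correct and takes essentially the same approach as the paper: both transplant the Baraglia--Schaposnik argument (the paper cites it as Proposition~4.13 rather than~4.12) to the noncompact setting $X\setminus D$, with the paper's own proof being a one-line reference that merely notes this adaptation. Your write-up is considerably more detailed, and you correctly isolate $B\cap D=\emptyset$ as the hypothesis that makes the puncture-loop generators of $\Lambda_M[2]$ even; one notational quibble is that $\mathrm{PD}_X[\ell]$ should really be the Poincar\'e dual taken in the punctured surface (or equivalently the $V$-surface $M$), not in the closed surface $X$, since a small loop around a point of $D$ is null-homologous in $X$.
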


\begin{proof}
Similar to the proof of Proposition 4.13 in \cite{BarScha}. The only difference is that we are working on the noncompact surface $X \backslash D$.
\end{proof}

Define a non-degenerate symmetric bilinear form $\left( \cdot ,\cdot  \right):\mathbb{Z}_2 B \otimes \mathbb{Z}_2 B \rightarrow \mathbb{Z}_2$ as
\begin{align*}
(b_i,b_j)=\begin{cases}
0 & \text{ if } i \neq j\\
1 & \text{ otherwise }.
\end{cases}
\end{align*}
Clearly, we have $(\mathbb{Z}_2 B)^{ev}=(b_o)^{\perp}$, where $(b_o)^{\perp}$ is the orthogonal complement of $b_o$. Hence, the restriction of the bilinear form $\left( \cdot ,\cdot  \right)$ to $(\mathbb{Z}_2 B)^{ev} / (b_o)= (b_o)^{\perp}/(b_o)$ is non-degenerate. Note that $(x,x)=(x,b_o)$ for any $x \in \mathbb{Z}_2 B$ and $(x,x)=0$ for any $x \in (\mathbb{Z}_2 B)^{ev}$. Denote by $\langle , \rangle$ the intersection form given by the pullback of $\left( \cdot ,\cdot  \right)$ under the map $\Lambda_{P_V}[2] \rightarrow (\mathbb{Z}_2 B)^{ev}$. In other words, the bilinear form $\langle ,\rangle$ is defined by
\begin{align*}
\langle (w_1,x_1,y_1),(w_2,x_2,y_2) \rangle=(y_1,y_2),
\end{align*}
for all $(w_1,x_1,y_1),(w_2,x_2,y_2) \in \Lambda_{P_V}[2] \cong \left(\bigoplus\limits_{i=1}^{s-1} \mathbb{Z}_2 \right) \oplus \Lambda_X[2] \oplus  (\mathbb{Z}_2 B)^{ev}$. The same approach can be applied to the decomposition $\Lambda_{P_V}[2] \cong \left( \bigoplus\limits_{i=1}^{s} \mathbb{Z}_2 \right) \oplus \Lambda_X[2] \oplus  (\mathbb{Z}_2 B)^{ev}/(b_o)$ under the map $\Lambda_{P_V}[2] \rightarrow (\mathbb{Z}_2 B)^{ev}/(b_o)$. We use the same notation $\langle ,\rangle$ for the bilinear form with respect to the decomposition of $\Lambda_{P_V}[2]$ from Remark \ref{507}.

Recall that there is natural intersection form $\langle ,\rangle$ on the Riemann surface $X$. This intersection form can be naturally extended to the punctured Riemann surface $M \backslash D \cong X \backslash D$, thus providing a well-defined intersection form $\langle , \rangle: \Lambda_M[2] \times \Lambda_M[2] \rightarrow \mathbb{Z}$. The existence of the intersection form $\langle , \rangle$ on $\Lambda_{M_\eta}[2]$ is now proved similarly as in Proposition 4.15 in \cite{BarScha} using our Lemma \ref{506}:

\begin{prop}\label{509}
There exists a decomposition of $\Lambda_{M_\eta}$
\begin{align*}
\Lambda_{M_\eta}[2] & \cong \Lambda_{M}[2] \oplus (\mathbb{Z}_2 B)^{ev} \oplus \Lambda_M[2] \\
& \cong \left(\bigoplus_{i=1}^{s-1} \mathbb{Z}_2 \right) \oplus \Lambda_{X}[2] \oplus (\mathbb{Z}_2 B)^{ev} \oplus \left(\bigoplus_{i=1}^{s-1} \mathbb{Z}_2 \right) \oplus \Lambda_{X}[2].
\end{align*}
The intersection form $\langle , \rangle$ on $\Lambda_{M_\eta}[2]$ is given by
\begin{align*}
\langle (x'_1,x''_1,y_1,z'_1,z''_1),(x'_2,x''_2,y_2,z'_2,z''_2) \rangle=\langle x''_1,z''_2 \rangle + (y_1,y_2)+\langle z''_1,x''_2 \rangle,
\end{align*}
for all $x'_1,x'_2,z'_1,z'_2 \in \left(\bigoplus\limits_{i=1}^{s-1} \mathbb{Z}_2 \right)$, $x''_1,x''_2,z''_1,z''_2 \in \Lambda_X[2]$, and $y_1,y_2 \in (\mathbb{Z}_2 B)^{ev}$.
\end{prop}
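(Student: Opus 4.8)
The plan is to obtain the decomposition of $\Lambda_{M_\eta}[2]$ by splitting the third exact sequence from Lemma \ref{506}, namely
\begin{align*}
0 \longrightarrow \Lambda_{M}[2] \longrightarrow \Lambda_{M_\eta}[2] \longrightarrow \Lambda_{P_V}[2] \longrightarrow 0,
\end{align*}
and then to read off the intersection form. First I would use the covering involution $\tau: M_\eta \to M_\eta$, which acts on $\Lambda_{M_\eta}[2]$ with $\pi^* \Lambda_M[2]$ contained in the $+1$-eigenspace. Since we are working with $\mathbb{Z}_2$-coefficients, the decomposition into $\pm 1$ eigenspaces is not automatic, so the splitting has to be produced by hand: the injection $\Lambda_M[2] \hookrightarrow \Lambda_{M_\eta}[2]$ is $\pi^*$, and a section of the quotient map can be built from a choice of lifts of the generators of $\Lambda_{P_V}[2]$ in the decomposition of Remark \ref{507}, i.e. from lifts of the classes dual to cycles $h_\gamma$ (via Lemma \ref{508}) and lifts of the $\Lambda_X[2]$ and $(\bigoplus_{i=1}^{s-1}\mathbb{Z}_2)$ parts coming from the underlying surface. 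This is exactly the place where one invokes that the parabolic/$V$-picture reduces, in $\mathbb{Z}_2$-coefficients, to the noncompact surface $X_\eta \backslash D_\eta$ (the isomorphism $\Lambda_S[2] \cong \Lambda_{|S|\backslash D}[2]$ of \S 5.2), so that the construction in Proposition 4.15 of \cite{BarScha} applies essentially verbatim with $X_\eta$ replaced by $X_\eta \backslash D_\eta$.

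Having fixed the splitting, the second step is bookkeeping with ranks: the first summand $\Lambda_M[2] \cong (\bigoplus_{i=1}^{s-1}\mathbb{Z}_2) \oplus \Lambda_X[2]$ has rank $2g+s-1$, the middle $(\mathbb{Z}_2 B)^{ev}$ has rank $2l-1$, and the complementary copy of $\Lambda_M[2]$ (the image of a chosen section) again has rank $2g+s-1$; adding gives $2(2g+s-1) + (2l-1) = 2g_\eta + 2s - 1$ using $g_\eta = 2g-1+l$, which matches the rank of $\Lambda_{M_\eta}[2]$ stated in Lemma \ref{506}. This confirms the splitting is a genuine direct sum decomposition and is not merely a filtration.

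The third step is to compute the intersection form $\langle\,,\,\rangle$ on $\Lambda_{M_\eta}[2]$ in the chosen basis. For the $(\mathbb{Z}_2 B)^{ev}$-part this is Lemma \ref{508} together with the Picard--Lefschetz formula of Theorem \ref{505}: two vanishing cycles $h_\gamma, h_{\gamma'}$ intersect according to how the paths $\gamma,\gamma'$ meet the branch points, which is exactly the form $(\cdot,\cdot)$ on $\mathbb{Z}_2 B$ restricted to $(\mathbb{Z}_2 B)^{ev}$. For the two copies of $\Lambda_M[2]$: a class pulled back by $\pi^*$ from $M$ and another such class have zero intersection on $M_\eta$ (the covering being degree two and the classes being $\tau$-invariant, their cup product pulls back and is $2\times$ something, hence $0$ mod $2$ — here again one uses the reduction to $X_\eta\backslash D_\eta$); whereas a $\pi^*$-class in the first copy pairs with a section-class in the second copy precisely via the intersection form $\langle\,,\,\rangle$ of the base surface $M$, restricted along the $\Lambda_X[2]$ factor. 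The discrete $(\bigoplus_{i=1}^{s-1}\mathbb{Z}_2)$ factors pair trivially with everything since they are supported near the punctures where the relevant cycles can be pushed away. Assembling these contributions gives the displayed formula
\begin{align*}
\langle (x'_1,x''_1,y_1,z'_1,z''_1),(x'_2,x''_2,y_2,z'_2,z''_2)\rangle = \langle x''_1,z''_2\rangle + (y_1,y_2) + \langle z''_1,x''_2\rangle.
\end{align*}

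\textbf{Main obstacle.}
The delicate point is the construction of the splitting of the third exact sequence in $\mathbb{Z}_2$-coefficients and, relatedly, verifying that the mixed pairing between the two copies of $\Lambda_M[2]$ genuinely gives $\langle\,,\,\rangle$ while the self-pairing of each copy vanishes. In characteristic $2$ one cannot simply diagonalize by the $\tau$-action, so one must argue geometrically with explicit cycle representatives on $X_\eta \backslash D_\eta$: lifting each base cycle to an arc downstairs and tracking its two preimages, checking their mutual intersection numbers, and confirming that the choices are consistent (independent of the lift up to the relations already built into $\Lambda_{P_V}[2]$). This is where Lemma \ref{506} and Lemma \ref{508} do the real work, and it is the step that most faithfully mirrors — and must be checked against — Proposition 4.12, 4.13 and 4.15 of \cite{BarScha}, now in the noncompact (punctured) setting dictated by the $V$-surface formalism.
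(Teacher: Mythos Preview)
Your proposal is correct and follows the same approach the paper indicates: the paper does not give a detailed proof of this proposition but simply states that it is ``proved similarly as in Proposition 4.15 in \cite{BarScha} using our Lemma \ref{506}'', together with the observation that the intersection form on $X$ extends to the punctured surface $X\backslash D$. Your write-up is precisely an unpacking of that reference---splitting the third exact sequence of Lemma \ref{506}, invoking the $\mathbb{Z}_2$-identification $\Lambda_S[2]\cong\Lambda_{|S|\backslash D}[2]$ from \S 5.2 to reduce to the noncompact setting, and then tracking the pairing on each summand via Lemma \ref{508} and the Picard--Lefschetz description---so there is no discrepancy.
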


\begin{rem}\label{510}
The decomposition for $\Lambda_{M_\eta}$ can be written as
\begin{align*}
\Lambda_{M_\eta}[2] & \cong \Lambda_M[2] \oplus (\mathbb{Z}_2 B)^{ev} \oplus \Lambda_M[2] \\
& \cong \left(\bigoplus_{i=1}^{s-1} \mathbb{Z}_2 \right) \oplus \Lambda_{X}[2] \oplus (\mathbb{Z}_2 B)^{ev} \oplus \Lambda_{M}[2].
\end{align*}
We shall use the isomorphism $\Lambda_M[2] \cong \left(\bigoplus\limits_{i=1}^{s-1} \mathbb{Z}_2 \right) \oplus \Lambda_{X}[2]$ to highlight the first part $\left(\bigoplus\limits_{i=1}^{s-1} \mathbb{Z}_2 \right)$ when we discuss the monodromy action.
\end{rem}

For the rest of the section, we will study the monodromy action on $\Lambda_{M_\eta}[2]$ and $\Lambda_{P_V}[2]$ based on the decomposition of $\Lambda_{M_\eta}[2]$ and $\Lambda_{P_V}[2]$. Let $c$ be an element in the lattice $\Lambda_{M_\eta}[2]$. The corresponding Picard-Lefschetz transformation is
\begin{align*}
s_c(x)=x+ \langle c,x \rangle c.
\end{align*}
By Theorem \ref{505}, the monodromy action on $\Lambda_{M_\eta}[2]$ is generated by the involution $\tau$ together with $s_c$, where $c \in \Lambda_P[2]$ is of the form $c=(a,b_{ij},0)$, where $a \in \Lambda_{M}[2]$ and $b_{ij}=b_i+b_j \in (\mathbb{Z}_2 B)^{ev}$. Given a permutation $\omega \in S_{2l}$, there is a natural action on $B=\{b_1,...,b_{2l}\}$ by $\omega(b_i)=b_{\omega(i)}$. This action can be extended to $(\mathbb{Z}_2 B)^{ev}$. To simplify notation, set $s_{ij}:=s_{(0,b_{ij},0)}$. Under the monodromy action, $s_{ij}$ can be written in the form
\begin{align*}
s_{ij}=\begin{pmatrix}
I_{s-1} & 0 & 0 & 0 & 0\\
0 & I_{2g} & 0 & 0 & 0\\
0 & 0 & \sigma_{ij} & 0 & 0\\
0 & 0 & 0 & I_{2g} & 0\\
0 & 0 & 0 & 0 & I_{s-1}
\end{pmatrix},
\end{align*}
where $\sigma_{ij} \in S_{2l}$ is the transposition of $i$ and $j$. Now fix $x \in  \Lambda_X[2]$ and define another linear transformation $A_{ij}^x:=s_{b_{ij}} s_{b_{ij}+x}$. As a matrix, $A_{ij}^x$ can be written in the following form
\begin{align*}
A_{ij}^x=\begin{pmatrix}
I_{s-1} & 0 & 0 & 0 & 0\\
0 & I_{2g} & L_{ij}^x & S^x & 0\\
0 & 0 & I & (L_{ij}^x)^t & 0\\
0 & 0 & 0 & I_{2g} & 0\\
0 & 0 & 0 & 0 & I_{s-1}
\end{pmatrix},
\end{align*}
where $L_{ij}^x : (\mathbb{Z}_2 B)^{ev} \rightarrow \Lambda_{X}[2]$ is defined as $L_{ij}^x (b)=((b_{ij},b))x$. Here, $(L_{ij}^x)^t: \Lambda_X[2] \rightarrow \Lambda_M[2] \rightarrow (\mathbb{Z}_2 B)^{ev}$ is the adjoint map, and $S^x : \Lambda_X[2] \rightarrow \Lambda_X[2]$ is defined as $S^x(a)=\langle x,a \rangle x$. The monodromy action on $\Lambda_{M_\eta}[2]$ is then generated by $s_{ij}$ and $A_{ij}^x$.

\begin{rem}\label{511}
Based on the action defined above, we have $I_{2g+s-1}=\begin{pmatrix}I_{2g} & 0 \\ 0 & I_{s-1}
\end{pmatrix}$.
To ease notation, we may write the monodromy matrix $A_{ij}^x$ as \begin{align*}
A_{ij}^x=\begin{pmatrix}
I_{s-1} & 0 & 0 & 0 \\
0 & I_{2g} & L_{ij}^x & S^x \\
0 & 0 & I & (L_{ij}^x)^t  \\
0 & 0 & 0 & I_{2g+s-1} \\
\end{pmatrix}, \text{ or even, } A_{ij}^x=\begin{pmatrix}
I_{2g+s-1} & L_{ij}^x & S^x \\
0 & I & (L_{ij}^x)^t  \\
0 & 0 & I_{2g+s-1} \\
\end{pmatrix}.
\end{align*}
In fact, $A_{ij}^x$ is the matrix for the monodromy action for the parabolic $\text{GL}(2,\mathbb{R})$-Hitchin fibration as we shall prove next in Proposition \ref{512}. The upper left $3 \times 3$ block
\begin{align*}
\begin{pmatrix}
I_{s-1} & 0 & 0\\
0 & I_{2g} & (L_{ij}^x)\\
0 & 0 & I
\end{pmatrix}
\end{align*}
corresponds to the monodromy action on the parabolic $\text{SL}(2,\mathbb{R})$-Hitchin fibration (see Corollary \ref{513} and \ref{514}). On the other hand, the lower right $3 \times 3$ block
\begin{align*}
\begin{pmatrix}
I & 0 & 0\\
0 & I_{2g} & (L_{ij}^x)^t\\
0 & 0 & I_{s-1}
\end{pmatrix}
\end{align*}
corresponds to the $\text{PGL}(2,\mathbb{R})$-Hitchin fibration. The above monodromy action can be  naturally extended to the complex Lie groups $\text{GL}(2,\mathbb{R})$, $\text{SL}(2,\mathbb{R})$ and $\text{PGL}(2,\mathbb{R})$. Lastly, note that the monodromy action for the parabolic $\text{SL}(2,\mathbb{R})$-Hitchin fibration has a block $L_{ij}^x$, while its transpose matrix appears in the case of $\text{PGL}(2,\mathbb{R})$. Thus, there seems to be a ``duality" property undermining the monodromy action in the case of these two groups; we shall comment again on this phenomenon in Remark \ref{707} later on.
\end{rem}

\begin{prop}\label{512}
Let $G \subseteq \text{GL}(\Lambda_{M_\eta}[2])$ be the group generated by the monodromy action on $\Lambda_{M_\eta}[2]$.  Then $G$ is isomorphic to a semi-direct product $G \cong S_{2l} \ltimes H$, where $S_{2l}$ is isomorphic to the symmetric group generated by the transpositions $s_{ij}$ and $H$ is generated by the transformations $A_{ij}^x$, $x \in \Lambda_X[2]$, $1 \leq i<j \leq 2l$, for $l=2g-2+s$. The action of $\omega \in S_{2l}$ on $H$ is defined as
\begin{align*}
\omega(A_{ij}^x)=A^x_{\omega(i)\omega(j)}.
\end{align*}
\end{prop}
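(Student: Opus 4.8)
The plan is to analyze the group $G \subseteq \mathrm{GL}(\Lambda_{M_\eta}[2])$ generated by the monodromy matrices $s_{ij}$ and $A_{ij}^x$ in three stages: first identify the subgroup generated by the $s_{ij}$, then identify a normal subgroup $H$ containing the ``unipotent'' part, and finally check the semidirect product structure. First I would observe that the $s_{ij}$, by the explicit block-diagonal description given just before the statement, act on $\Lambda_{M_\eta}[2]$ only through the middle block $(\mathbb{Z}_2 B)^{ev}$ via the transposition $\sigma_{ij}$ of the basis vectors $b_i, b_j$. Since the transpositions $\sigma_{ij}$ generate the full symmetric group $S_{2l}$ acting on $B = \{b_1,\dots,b_{2l}\}$, and this action descends faithfully to $(\mathbb{Z}_2 B)^{ev}$ (the permutation representation on $2l \geq 4$ points restricted to the even sublattice has trivial kernel, as one checks directly), the subgroup generated by the $s_{ij}$ is isomorphic to $S_{2l}$. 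I would record that $l = 2g - 2 + s \geq 2$ here (using $g \geq 2$, or otherwise $s$ large enough) so that $2l \geq 4$ and the standard facts about $S_{2l}$ apply.

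Next I would let $H \trianglelefteq G$ be the normal subgroup generated by all conjugates of the $A_{ij}^x$ under $G$, and show that in fact the $A_{ij}^x$ themselves (for all $i<j$ and all $x \in \Lambda_X[2]$) already form a subgroup stable under conjugation by $S_{2l}$. The conjugation formula $\omega(A_{ij}^x) = A_{\omega(i)\omega(j)}^x$ would be verified by a direct matrix computation: conjugating by $s_\omega$ permutes the $B$-basis, hence replaces $b_{ij} = b_i + b_j$ by $b_{\omega(i)\omega(j)}$ inside the definitions of $L_{ij}^x(b) = ((b_{ij},b))x$ and its adjoint, while leaving $S^x$ (which does not involve $B$ at all) unchanged; since the bilinear form $(\cdot,\cdot)$ on $\mathbb{Z}_2 B$ is $S_{2l}$-invariant this gives exactly $L_{\omega(i)\omega(j)}^x$. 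Then I would check that the $A_{ij}^x$ are closed under multiplication up to elements of the same form: because $A_{ij}^x$ is upper-unitriangular with respect to the three-block filtration $\Lambda_M[2] \oplus (\mathbb{Z}_2 B)^{ev} \oplus \Lambda_M[2]$, the product $A_{ij}^x A_{kl}^y$ lands in the group of such unitriangular matrices, and one reads off from the recipe $A_{ij}^x = s_{b_{ij}} s_{b_{ij} + x}$ (a product of two Picard–Lefschetz transformations, each lying in $G$) that $H$ is precisely the subgroup of $G$ generated by these elements; in particular $H \subseteq \mathrm{SL}$, acts trivially on the associated graded of the filtration, and is normal in $G$ since it is generated by products of Picard–Lefschetz reflections $s_c$ with $c \in \Lambda_P[2]$, a set carried into itself by the whole monodromy group.

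Finally I would assemble the semidirect product. The subgroup $S_{2l} = \langle s_{ij}\rangle$ meets $H$ trivially: every nonidentity element of $\langle s_{ij}\rangle$ acts nontrivially on the associated graded (indeed nontrivially on the middle block $(\mathbb{Z}_2 B)^{ev}$), whereas every element of $H$ acts trivially there; and $G = \langle s_{ij}\rangle \cdot H$ because $G$ is generated by $s_{ij}$ and the $A_{ij}^x \in H$. Together with normality of $H$ this yields $G \cong S_{2l} \ltimes H$ with the stated action of $\omega \in S_{2l}$ on $H$. The main obstacle I anticipate is the second step: verifying that the $A_{ij}^x$ genuinely generate a group normal in $G$ and not merely a conjugation-stable generating set — this requires either showing that the subgroup of unitriangular matrices of the indicated block shape is exactly hit, or invoking the characterization of $H$ as generated by all $s_c$ with $c \in \Lambda_{P_V}[2]$ lying over the $(\mathbb{Z}_2 B)^{ev}$-component and carefully tracking how the involution $\tau$ and the $s_{ij}$ interact with these reflections. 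The rest is bookkeeping with $\mathbb{Z}_2$-matrices in the explicit basis of Proposition~\ref{509}.
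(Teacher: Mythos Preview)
Your proposal is correct and follows the standard line of argument; the paper itself does not give a proof but simply refers to Proposition~4.19 in \cite{BarScha} and Theorem~14 in \cite{Schap}, whose arguments are precisely the three-step analysis you outline (identify the $S_{2l}$ acting faithfully on the middle block, verify the conjugation formula $s_\omega A_{ij}^x s_\omega^{-1}=A_{\omega(i)\omega(j)}^x$ by direct computation, and use the block-unitriangular shape of the $A_{ij}^x$ to see that $\langle s_{ij}\rangle \cap H=\{1\}$). One small clarification: you should say that the \emph{subgroup generated by} the $A_{ij}^x$ is $S_{2l}$-stable, not that the set of generators forms a subgroup; since $G=\langle s_{ij}\rangle\cdot H$ and $H$ is normalized by the $s_{ij}$, normality of $H$ in $G$ follows immediately and no separate argument about ``all $s_c$ with $c\in\Lambda_{P_V}[2]$'' is needed.
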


\begin{proof}
The proof is similar to that of Proposition 4.19 in \cite{BarScha} and Theorem 14 in \cite{Schap}.
\end{proof}

\begin{cor}\label{513}
Let $G_P \subseteq \text{GL}(\Lambda_{P_V}[2])$ be the group generated by the monodromy action on $\Lambda_{P_V}[2]$. Then the element in $G_P$ can be written in the form
\begin{align*}
\begin{pmatrix}
I_{s-1} & 0 & 0\\
0 & I_{2g} & A\\
0 & 0 & \omega
\end{pmatrix},
\end{align*}
where $\omega \in S_{2l}$ is the permutation and $A$ is an endomorphism $(\mathbb{Z}_2 B)^{ev} \rightarrow \Lambda_X[2]$.
\end{cor}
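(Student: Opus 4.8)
The plan is to obtain the statement as a corollary of Proposition \ref{512} by restricting the monodromy to the Prym part of the lattice. First I would record that, with respect to the decomposition
\begin{align*}
\Lambda_{P_V}[2] \cong \left(\bigoplus_{i=1}^{s-1}\mathbb{Z}_2\right) \oplus \Lambda_X[2] \oplus (\mathbb{Z}_2 B)^{ev}
\end{align*}
of Remark \ref{507}, the lattice $\Lambda_{P_V}[2]$ is realized inside $\Lambda_{M_\eta}[2]$ (Proposition \ref{509}, Remark \ref{510}) as the span of the three distinguished summands $\left(\bigoplus_{i=1}^{s-1}\mathbb{Z}_2\right)$, $\Lambda_X[2]$ and $(\mathbb{Z}_2 B)^{ev}$. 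Inspecting the block forms of $s_{ij}$ and $A^x_{ij}$ written down just before Proposition \ref{512}, this subspace is invariant under each of these generators, and the restrictions are
\begin{align*}
s_{ij}\big|_{\Lambda_{P_V}[2]} = \begin{pmatrix} I_{s-1} & 0 & 0 \\ 0 & I_{2g} & 0 \\ 0 & 0 & \sigma_{ij} \end{pmatrix}, \qquad A^x_{ij}\big|_{\Lambda_{P_V}[2]} = \begin{pmatrix} I_{s-1} & 0 & 0 \\ 0 & I_{2g} & L^x_{ij} \\ 0 & 0 & I \end{pmatrix}.
\end{align*}
Moreover the deck involution $\tau$ acts trivially on $\Lambda_{P_V}[2]$, since on $2$-torsion line $V$-bundles the defining relation $\tau^*L\cong L^{-1}$ becomes $\tau^*L\cong L$. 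Hence $G_P$ is exactly the subgroup of $\text{GL}(\Lambda_{P_V}[2])$ generated by these two families of block matrices.

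Next I would check that the set $\mathcal{G}$ of all matrices of the form
\begin{align*}
\begin{pmatrix} I_{s-1} & 0 & 0 \\ 0 & I_{2g} & A \\ 0 & 0 & \omega \end{pmatrix}, \qquad \omega \in S_{2l},\quad A \in \mathrm{Hom}_{\mathbb{Z}_2}\!\left((\mathbb{Z}_2 B)^{ev},\Lambda_X[2]\right),
\end{align*}
is a subgroup of $\text{GL}(\Lambda_{P_V}[2])$. This is a short block computation: the product of two such matrices again has its two upper-left diagonal blocks equal to $I_{s-1}$ and $I_{2g}$, its lower-right block the product $\omega_1\omega_2 \in S_{2l}$, and its upper-right block $A_2 + A_1\omega_2$, which is again $\mathbb{Z}_2$-linear; the inverse is obtained by replacing $(\omega,A)$ by $(\omega^{-1}, A\omega^{-1})$, using $A\omega^{-1}+A\omega^{-1}=0$ over $\mathbb{Z}_2$. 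Since the generators $s_{ij}\big|_{\Lambda_{P_V}[2]}$ and $A^x_{ij}\big|_{\Lambda_{P_V}[2]}$ all lie in $\mathcal{G}$, we get $G_P \subseteq \mathcal{G}$, which is precisely the assertion.

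The step I expect to be the real work is the first one, namely the bookkeeping that pins down $\Lambda_{P_V}[2]$ inside $\Lambda_{M_\eta}[2]$ as the invariant subspace on which $s_{ij}$ and $A^x_{ij}$ restrict to the displayed blocks — in particular that nothing from the $(\mathbb{Z}_2 B)^{ev}$-summand ever feeds into the $\left(\bigoplus_{i=1}^{s-1}\mathbb{Z}_2\right)$-summand under the monodromy, which is what forces the first block-row to remain $(I_{s-1}\ 0\ 0)$; this reflects the fact that the monodromy braids the branch points in $B$ while leaving the punctures $D$ fixed. Once the generators are identified in block form, closure of $\mathcal{G}$ and the containment $G_P\subseteq\mathcal{G}$ are routine, exactly along the lines of the proof of Proposition \ref{512} (compare Proposition 4.19 in \cite{BarScha}).
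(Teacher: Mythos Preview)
Your proposal is correct and follows essentially the same approach as the paper: the corollary is stated without an explicit proof, being an immediate consequence of Proposition \ref{512} together with the block structure already displayed (and highlighted in Remark \ref{511}) by restricting the generators $s_{ij}$ and $A^x_{ij}$ to the Prym summand $\Lambda_{P_V}[2]\subset\Lambda_{M_\eta}[2]$. You have simply made explicit the bookkeeping (invariance of the subspace, triviality of $\tau$ on $2$-torsion Prym classes, closure of the block-triangular set under multiplication and inversion) that the paper leaves to the reader.
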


Note that the lattice $\widetilde{\Lambda}_{P_V}[2]$ is a $\mathbb{Z}_2$-extension of $\Lambda_{P_V}[2]$. As vector spaces, it holds that $\widetilde{\Lambda}_{P_V}[2] \cong \Lambda_{P_V}[2] \oplus \Lambda_{P_V}[2]$. Therefore the monodromy action on $\widetilde{\Lambda}_{P_V}[2]$ is the same as the monodromy action on $\Lambda_{P_V}[2]$ from Corollary \ref{513} when restricted to each summand.

\begin{cor}\label{514}
Let $G_P \subseteq \text{GL}(\widetilde{\Lambda}_{P_V}[2])$ be the group generated by the monodromy action on $\widetilde{\Lambda}_{P_V}[2]$. Let $G_{P_1}$ and $G_{P_2}$ be the groups generated by the monodromy action on the first and second summand of $\widetilde{\Lambda}_{P_V}[2] \cong \Lambda_{P_V}[2] \oplus \Lambda_{P_V}[2]$ respectively. Thus, the monodromy group $G_P$ is generated by $G_{P_1}$ and $G_{P_2}$.
\end{cor}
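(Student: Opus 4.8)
The corollary follows by combining the description of the monodromy group on $\Lambda_{P_V}[2]$ from Corollary \ref{513} with the vector-space splitting $\widetilde{\Lambda}_{P_V}[2]\cong\Lambda_{P_V}[2]\oplus\Lambda_{P_V}[2]$ attached to the $\mathbb{Z}_2$-extension $0\to\mathbb{Z}_2\to\widetilde{\Lambda}_{P_V}[2]\to\Lambda_{P_V}[2]\to 0$; write $\widetilde{\Lambda}_{P_V}[2]=V_1\oplus V_2$ with $V_1\cong V_2\cong\Lambda_{P_V}[2]$. First I would recall, via Lemma \ref{504} and Theorem \ref{505}, that the monodromy group on $\widetilde{\Lambda}_{P_V}[2]$ is generated by the transformations induced by the swaps $s_\gamma$---for $\gamma$ an embedded path joining two branch points, with associated cycle $c_\gamma\in\Lambda_{P_V}[2]$ having $\varepsilon(c_\gamma)=b_i+b_j$ by Lemma \ref{508}---together with the transformation of Definition \ref{503} induced by the $\mathbb{C}^*$-loop. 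Each of these covers the corresponding transformation of $\Lambda_{P_V}[2]$, and since the kernel of $\widetilde{\Lambda}_{P_V}[2]\to\Lambda_{P_V}[2]$ is $\mathbb{Z}_2$, such a lift is determined only up to composition with an element of $\mathrm{Hom}(\Lambda_{P_V}[2],\mathbb{Z}_2)$.

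The key step is to show that, in the splitting $V_1\oplus V_2$, every monodromy generator factors as $t_1 t_2$, where $t_1$ acts on $V_1$ as the corresponding generator of the Corollary \ref{513} group and fixes $V_2$ pointwise, $t_2$ acts on $V_2$ as that same generator and fixes $V_1$ pointwise, and both $t_1$ and $t_2$ are themselves monodromy transformations; that is, the monodromy acts ``one summand at a time''. Granting this, $G_{P_1}$ and $G_{P_2}$, the groups generated by the $t_1$'s respectively the $t_2$'s, are each the Corollary \ref{513} group on the relevant copy of $\Lambda_{P_V}[2]$, both sit inside $G_P$, and the generating set $\{t_1 t_2\}$ of $G_P$ lies in $\langle G_{P_1},G_{P_2}\rangle$; hence $G_P=\langle G_{P_1},G_{P_2}\rangle$, which is the assertion.

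The main obstacle is the factorization claim, which is the parabolic / $V$-surface counterpart of the analysis of the $\mathbb{Z}_2$-extension $\widetilde{\Lambda}_P[2]$ in the non-parabolic $\mathrm{SL}(2,\mathbb{R})$ case of \cite{BarScha}. I would prove it by tracking a pair $(L,\widetilde{\tau})$ through the Dehn twist of Theorem \ref{505}: using the exact sequences of Lemma \ref{506} and the block decompositions of Remark \ref{507} one isolates the $\bigoplus_{i=1}^{s-1}\mathbb{Z}_2$, $\Lambda_X[2]$ and $(\mathbb{Z}_2 B)^{ev}$ summands, checks that the punctual factor $\bigoplus_{i=1}^{s-1}\mathbb{Z}_2$ is inert, and then reads off the induced action on the lift $\widetilde{\tau}$ to see that the two available lifts of each generator differ exactly by swapping which summand is moved, yielding the desired $t_1$ and $t_2$. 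This is where the argument genuinely uses the $V$-surface construction rather than merely quoting \cite{BarScha}.
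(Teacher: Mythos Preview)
The paper does not give a separate proof of this corollary: its entire justification is the short paragraph immediately preceding the statement, namely that $\widetilde{\Lambda}_{P_V}[2]$ is a $\mathbb{Z}_2$-extension of $\Lambda_{P_V}[2]$, that ``as vector spaces'' $\widetilde{\Lambda}_{P_V}[2]\cong\Lambda_{P_V}[2]\oplus\Lambda_{P_V}[2]$, and that therefore the monodromy restricted to each summand coincides with the action of Corollary~\ref{513}. In other words, the paper simply observes that the monodromy preserves each of the two ``sheets'' of the $\mathbb{Z}_2$-extension and acts on each by the already-known $\Lambda_{P_V}[2]$-action; nothing more is argued.

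Your proposal is considerably more elaborate and aims at a stronger statement than the paper uses. You want every monodromy generator on $\widetilde{\Lambda}_{P_V}[2]$ to factor as $t_1t_2$ with $t_1$ moving only $V_1$, $t_2$ moving only $V_2$, and \emph{both $t_1$ and $t_2$ themselves monodromy transformations}. That last requirement is not asserted in the paper and is not needed for the way Corollary~\ref{514} is applied later (in Proposition~\ref{602} the only consequence used is that the orbit count on $\widetilde{\Lambda}_{P_V}[2]$ is twice the orbit count on $\Lambda_{P_V}[2]$). Your proposed mechanism for producing $t_1$ and $t_2$---that ``the two available lifts of each generator differ exactly by swapping which summand is moved''---is also not quite right: the two lifts of a transformation of $\Lambda_{P_V}[2]$ to $\widetilde{\Lambda}_{P_V}[2]$ differ by an element of $\mathrm{Hom}(\Lambda_{P_V}[2],\mathbb{Z}_2)$, not by something that exchanges the roles of $V_1$ and $V_2$. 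So the factorization step as you describe it does not go through.

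A second point worth flagging is the meaning of the ``direct sum''. A $\mathbb{Z}_2$-extension of $\Lambda_{P_V}[2]$ has rank $\dim\Lambda_{P_V}[2]+1$, not $2\dim\Lambda_{P_V}[2]$; the paper's $\Lambda_{P_V}[2]\oplus\Lambda_{P_V}[2]$ should be read as the set-theoretic decomposition into the two cosets of the $\mathbb{Z}_2$ kernel (the two choices of lift $\widetilde{\tau}$), not as an honest internal direct sum of subspaces. Once read that way, the paper's one-line argument is adequate: the monodromy fixes the distinguished $\mathbb{Z}_2$, hence permutes each coset within itself, and the induced action on each coset is---after identifying the coset with $\Lambda_{P_V}[2]$---the action of Corollary~\ref{513}. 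Your more intricate Dehn-twist tracking is unnecessary for this.
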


\section{Monodromy and parabolic $\text{SL}(2,\mathbb{R})$-Higgs Bundles}
In this section we apply the calculations for the monodromy described in the previous section in order to find an exact number of connected components for the moduli space of polystable parabolic $\text{SL}(2,\mathbb{R})$-Higgs bundles ${\mathsf{\mathcal{M}}}_{par}\left( \text{SL}\left( 2,\mathbb{R} \right) \right)$. Since there is a one-to-one correspondence between parabolic Higgs bundles and Higgs $V$-bundles, all parabolic Higgs bundles we mention in the proofs of this section should be viewed as their Higgs $V$-bundle counterparts.

\begin{prop}\label{601}
The fiber of the parabolic Hitchin map ${{\mathsf{\mathcal{M}}}_{par}}\left( \text{SL}\left( 2,\mathbb{R} \right) \right)\to \mathsf{\mathcal{H}}$ with respect to a point $a_0$ in the regular locus $H_{reg}$ is the space $\widetilde{\Lambda}_{P_V}[2]$.
\end{prop}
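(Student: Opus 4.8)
The plan is to identify the fiber by combining the description of $\mathcal{M}_{par}(\text{SL}(2,\mathbb{R}))$ as a fixed-point locus with the parabolic BNR correspondence and the Prym variety machinery of \S 3.3--3.4. By Remark \ref{210}(3) a parabolic $\text{SL}(2,\mathbb{R})$-Higgs bundle over $a_0=\eta\in H_{reg}$ is exactly a parabolic $\text{SL}(2,\mathbb{C})$-Higgs bundle in the fiber $h^{-1}(a_0)$ that is fixed by the transpose involution $\Theta_{\text{SL}(2,\mathbb{R})}$, together with the symmetric nondegenerate pairing realizing this fixed-point structure. First I would recall, via Proposition \ref{301} and the $V$-bundle reformulation of \S 3.3, that after the normalization $L=L_0\otimes\pi^{*}K(D)^{1/2}$ the fiber of the $\text{SL}(2,\mathbb{C})$-Hitchin map over $\eta$ is identified with $\text{Prym}(M_\eta,M)=\{L_0\in\text{Pic}_V(M_\eta):\tau^{*}L_0\cong L_0^{-1}\}$; the condition that the parabolic determinant is trivial, which holds since $E=N\oplus N^{\vee}$, translates under this normalization precisely into the Prym condition, and the Higgs field is recovered from the tautological section as usual.

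Next I would transport $\Theta_{\text{SL}(2,\mathbb{R})}\colon(E,\Phi)\mapsto(E^{*},\Phi^{t})$ to the spectral side. Using relative duality for $\pi\colon M_\eta\to M$ together with the identification $\omega_\pi\cong\pi^{*}K(D)$ of the relative dualizing sheaf (which follows from the adjunction computation in \S 3.1, the tautological section cutting out the reduced ramification divisor), one gets $E^{*}\cong\pi_*(L^{\vee}\otimes\omega_\pi)$, and with $L=L_0\otimes\pi^{*}K(D)^{1/2}$ this corresponds to $L_0^{-1}$. Hence on $\text{Prym}(M_\eta,M)$ the involution $\Theta_{\text{SL}(2,\mathbb{R})}$ acts by $L_0\mapsto L_0^{-1}$, so an isomorphism class lies in the fixed locus exactly when $L_0^{2}\cong\mathcal{O}$; combined with $\tau^{*}L_0\cong L_0^{-1}$ this gives $\tau^{*}L_0\cong L_0$, and the set of $\Theta$-fixed isomorphism classes in $h^{-1}(a_0)$ is $\text{Prym}(M_\eta,M)[2]=\Lambda_{P_V}[2]$.

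Finally I would promote this to the $\mathbb{Z}_2$-extension. A point of $\mathcal{M}_{par}(\text{SL}(2,\mathbb{R}))$ over $a_0$ carries not only a $\Theta$-fixed pair $(E,\Phi)$ but also the reduction of structure group, namely a symmetric pairing $\langle,\rangle\colon E\otimes E\to\mathcal{O}$ compatible with the fixed trivialization of $\wedge^{2}E$; under $E=\pi_*L$ such a pairing is equivalent to an involution $\widetilde{\tau}\colon L\to L$ covering $\tau$, since it is built from integrating the fiberwise pairing $L\otimes\tau^{*}L\to\omega_\pi$, which requires exactly the datum identifying $\tau^{*}L$ with $L^{\vee}\otimes\omega_\pi$. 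The set of pairs $(L,\widetilde{\tau})$ with $L\in\text{Prym}(M_\eta,M)$, $L^{2}\cong\mathcal{O}$, and $\widetilde{\tau}$ such an involution is, by the construction in \S 5.3, the lattice $\widetilde{\Lambda}_{P_V}[2]$, sitting in the exact sequence $0\to\mathbb{Z}_2\to\widetilde{\Lambda}_{P_V}[2]\to\Lambda_{P_V}[2]\to 0$; so it remains to match the fiber with this set of pairs. The two points needing care --- and the main obstacle --- are that (i) each $L_0\in\Lambda_{P_V}[2]$ admits a compatible involution $\widetilde{\tau}$, i.e. surjectivity of $\widetilde{\Lambda}_{P_V}[2]\to\Lambda_{P_V}[2]$, which is the content of the exact sequence above established as in \cite{BarScha}, and (ii) the two involutions $\pm\widetilde{\tau}$ over a fixed $L_0$ yield non-isomorphic polystable parabolic $\text{SL}(2,\mathbb{R})$-Higgs bundles: an isomorphism between them would be an automorphism $g\in\text{Aut}(E,\Phi)=\mathbb{C}^{*}$ with $g^{2}=-1$, and such $g=\pm i$ fails to preserve the trivialization of $\wedge^{2}E$, so no such isomorphism exists --- in contrast with the $\text{GL}(2,\mathbb{R})$-case, where this $\mathbb{Z}_2$ collapses. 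Polystability of all resulting Higgs bundles is automatic since $X_\eta$ is smooth for $\eta\in H_{reg}$, making $\pi_*L$ a stable $\text{SL}(2,\mathbb{C})$-Higgs bundle; following \cite{BarScha} adapted to the $V$-surface $M_\eta$ then completes the identification $h^{-1}(a_0)\cong\widetilde{\Lambda}_{P_V}[2]$.
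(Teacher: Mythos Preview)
Your argument is correct and follows essentially the same route as the paper's proof: both identify the fiber via the normalized BNR correspondence $L\mapsto L_0=L\otimes\pi^*K(D)^{-1/2}$ landing in $\mathrm{Prym}(M_\eta,M)$, then pick out the $2$-torsion together with a lift $\widetilde{\tau}$ of the covering involution. The only cosmetic difference is that you phrase the $2$-torsion condition as the fixed locus of $\Theta_{\mathrm{SL}(2,\mathbb R)}$ transported to the spectral side via relative duality, whereas the paper argues directly by decomposing $E=\pi_*L$ into $\widetilde{\tau}$-eigenspaces $N_+\oplus N_-$ to produce the triple $(N,\beta,\gamma)$ and then checks that this construction and its inverse are mutually inverse bijections.
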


\begin{proof}
Let $(E,\Phi)$ be a parabolic $\text{SL}(2,\mathbb{C})$-Higgs bundle with parabolic structure $\alpha$. Let $M_\eta\rightarrow M$ be the corresponding spectral covering of the $V$-surface $M$, and let $\tau : M_\eta \rightarrow M_\eta$ be the involution of the covering $V$-surface $M_\eta$. Under the BNR correspondence for $V$-surfaces (see \cite{KSZ2}), denote by $L$ the corresponding line $V$-bundle over $M_\eta$, i.e. $\pi_*(L)=E$. Because this version of the BNR correspondence over $V$-surfaces may not be a one-to-one correspondence, the choice of $L$ is in general not unique. Fixing a square root $K(D)^{\frac{1}{2}}$ of $K(D)$, we write $L=L_0 \otimes \pi^*(K(D)^{\frac{1}{2}})$. Clearly, we have $\tau^* L_0 \cong L_0^{\vee}$. This means that $L_0$ is a well-defined element in the Prym variety $\text{Prym}(M_\eta,M)$. Also, note that the condition $\tau^* L_0 \cong L_0^{\vee}$ implies that the parabolic structures of $L_0$ over the pre-images of $x \in D$ should be the same. In other words, the parabolic structure of $E$ over $x \in D$ is a trivial filtration with weight $0$ or $\frac{1}{2}$. In this case, the parabolic BNR correspondence is in fact a one-to-one correspondence.\\
Let $(L_0,\widetilde{\tau})$ be a $2$-torsion point in $\widetilde{\Lambda}_{P_V}[2]$. Let $(E,\Phi)$ be the corresponding parabolic Higgs bundle defined as above. Since the parabolic structure $\alpha$ of $E$ could only be a trivial filtration with weight $0$ or $\frac{1}{2}$ and the parabolic BNR correspondence over a $V$-surface is unique for a trivial filtration, thus the choice of $L$ defined above is unique. As a $2$-torsion point, we have $L_0^2=\mathcal{O}$. Therefore $\tau^*L_0 \cong L_0$. The condition $L_0^2=\mathcal{O}$ also determines an orthogonal structure on $E$. The involution $\widetilde{\tau}:L_0 \rightarrow L_0$ determines an involution of $E$. We use the same notation $\widetilde{\tau}$ for the involution of $E$. With respect to the involution $\widetilde{\tau}$, we can decompose $E=N_+ \oplus N_-$ into eigenspaces with eigenvalue $+1$ and $-1$ respectively. Together with the orthogonal structure on $E$, $N_+$ and $N_-$ are dual to each other. The $V$-Higgs field $\Phi$ is induced by the tautological section $\lambda: L \rightarrow L \otimes \pi^*(K(D))$. Since $\tau^*(\lambda)=-\lambda$, then $\Phi$ is a skew-symmetric matrix, which can be written as
$
\begin{pmatrix}
0 & \beta \\
\gamma & 0
\end{pmatrix}
$, where $\beta \in \text{Hom}(N_-,N_+ \otimes K(D))$, $\gamma \in \text{Hom}(N_+,N_- \otimes K(D))$. The discussion above therefore provides a parabolic $\text{SL}(2,\mathbb{R})$-Higgs bundle $(N=N_+,\beta,\gamma)$.\\
Conversely, given a parabolic $\text{SL}(2,\mathbb{R})$-Higgs bundle $(N=N_+,\beta,\gamma)$, let $E:=N \oplus N^{\vee}$ and let $L_0$ be the associated parabolic line bundle as defined above. The parabolic structure of $N$ uniquely determines the parabolic structure of $L_0$. The orthogonal structure on $E$ implies that $L_0^2=\mathcal{O}$ and so $L_0$ is a $2$-torsion point. Also, the decomposition $E=N \oplus N^{\vee}$ determines an involution $\widetilde{\tau}$ of $E$ such that $N$ and $N^{\vee}$ are eigenspaces with eigenvalues $1$ and $-1$ respectively. This involution $\widetilde{\tau}$ induces an involution of the line bundle $L_0$, and we use the same notation $\widetilde{\tau}$ for the involution of $L_0$. In conclusion, a parabolic $\text{SL}(2,\mathbb{R})$-Higgs bundle $(N=N_+,\beta,\gamma)$ gives a well-defined element $(L_0,\widetilde{\tau})$ in $\widetilde{\Lambda}_{P_V}[2]$.
\end{proof}

\begin{prop}\label{602}
The number of orbits under the monodromy action on the Hitchin fibration $${{\mathsf{\mathcal{M}}}_{par}}\left( \text{SL}\left( 2,\mathbb{R} \right) \right)\to \mathsf{\mathcal{H}}$$ is $2^{2g+s}+2^s(l-1)$, where $l = par\deg K(D) = 2g-2+s$. It is also the maximal number of connected components of the moduli space ${{\mathsf{\mathcal{M}}}_{par}}\left( \text{SL}\left( 2,\mathbb{R} \right) \right)$.
\end{prop}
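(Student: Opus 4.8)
The plan is to reduce the statement to a combinatorial count of orbits of the monodromy group on the lattice $\widetilde{\Lambda}_{P_V}[2]$. By Proposition~\ref{601} the fibre of ${{\mathsf{\mathcal{M}}}_{par}}\left( \text{SL}\left( 2,\mathbb{R} \right) \right)\to \mathsf{\mathcal{H}}$ over a point of $H_{reg}$ is precisely $\widetilde{\Lambda}_{P_V}[2]$, so the number of components of the moduli space meeting $h^{-1}(H_{reg})$ equals the number of orbits of $\pi_1(H_{reg})$ on $\widetilde{\Lambda}_{P_V}[2]$. By Corollaries~\ref{513} and \ref{514} this action is generated (on the $\mathbb{Z}_2$-extension $\widetilde{\Lambda}_{P_V}[2]$ of $\Lambda_{P_V}[2]$) by the branch-point transpositions $s_{ij}$ and the transvections $A_{ij}^{x}=s_{b_{ij}}s_{b_{ij}+x}$; in the decomposition $\Lambda_{P_V}[2]\cong\left(\bigoplus_{i=1}^{s-1}\mathbb{Z}_2\right)\oplus\Lambda_X[2]\oplus(\mathbb{Z}_2B)^{ev}$ of Remark~\ref{507} these are block-triangular as recorded in Remark~\ref{511}. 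In particular the first summand $\bigoplus_{i=1}^{s-1}\mathbb{Z}_2$ — the ``puncture part'' of $\Lambda_M[2]$ — is fixed pointwise, so I would begin by splitting it off, together with the extra $\mathbb{Z}_2$ carried by the extension (the sign of $\widetilde{\tau}$), and organise the count around these invariant factors, which will account for the global factor $2^{s}$.

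The main mechanism is to stratify $\widetilde{\Lambda}_{P_V}[2]$ by the value of the map $\varepsilon$ of \S5.3. Since $\varepsilon$ is equivariant for the permutation action of $S_{2l}$ on the set of branch points $B=\{b_1,\dots,b_{2l}\}$, each monodromy orbit lies over a single $S_{2l}$-orbit of $\varepsilon$-values, and those are classified by the cardinality $k$ of the support — geometrically $k$ records the distribution of the zeros of $\eta_2$ between $\beta$ and $\gamma$, i.e. the parabolic Toledo invariant. Over a fixed $\varepsilon$ with $|\mathrm{supp}|=k$, the $\varepsilon$-fibre is a coset of $\Lambda_M[2]$, and, following Lemma~\ref{508} and the explicit form of $A_{ij}^{x}$, the crucial dichotomy is: if $0<k<2l$ one may choose $b_i$ in the support and $b_j$ outside it, so that $L_{ij}^{x}(\varepsilon)=x$ for every $x\in\Lambda_X[2]$, the transvections translate the $\Lambda_X[2]$-coordinate arbitrarily, and the whole stratum collapses to a single orbit (after restoring the fixed factors); whereas in the two extremal strata every $L_{ij}^{x}$ annihilates $\varepsilon$, the $\Lambda_X[2]$-coordinate is never moved, and the stratum breaks into $|\Lambda_M[2]|$ one-point orbits, matching the square roots of $K(D)$ as a line $V$-bundle, i.e. the maximal-Toledo (Hitchin-type) loci.

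Summing up: the two extremal strata contribute $2^{2g+s}$ fixed orbits in total, and — after using the sign-of-$\widetilde{\tau}$ identification together with the parity restrictions on $k$ imposed by the degrees of the relevant line $V$-bundles — the non-extremal strata contribute exactly $l-1$ classes, each carrying one orbit for each of the $2^{s}$ admissible invariant (parabolic/puncture and sign) data, hence $2^{s}(l-1)$ orbits; this gives the claimed total $2^{2g+s}+2^{s}(l-1)$. For the last sentence of the statement, $h^{-1}(H_{reg})\to H_{reg}$ is a fibre bundle over the connected base $H_{reg}$, so its components are in bijection with these orbits; since $H_{reg}$ is open and dense in $\mathsf{\mathcal{H}}$, the set $h^{-1}(H_{reg})$ is dense in ${{\mathsf{\mathcal{M}}}_{par}}\left( \text{SL}\left( 2,\mathbb{R} \right) \right)$ and meets every component of the moduli space, so the number of components is at most the number of orbits — the asserted maximal bound, which together with Proposition~\ref{401} pins down the exact count in the main theorem.

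The step I expect to be the main obstacle is precisely this endpoint bookkeeping: determining exactly which support-cardinalities $k$ occur (the parity conditions are subtler over a $V$-surface, where line $V$-bundles carry fractional degrees), pinning down how the sign of $\widetilde{\tau}$ and the $\mathbb{Z}_2$-extension interact with the stratification and with the $2^{s}$ invariant factors, and verifying rigorously that every non-extremal stratum collapses to one orbit while neither extremal stratum does. The transitivity argument inside the intermediate strata is a direct transcription of the Baraglia–Schaposnik and Schaposnik computations once the $V$-surface lattice calculus of \S5 is in hand; it is the combinatorics at the two ends that genuinely requires the orbifold-specific input.
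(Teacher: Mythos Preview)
Your approach is essentially the same as the paper's. Both identify the fibre with $\widetilde{\Lambda}_{P_V}[2]$ via Proposition~\ref{601}, use the decomposition of Remark~\ref{507} and the block--triangular monodromy generators of Corollary~\ref{513}, and split the count according to whether the $(\mathbb{Z}_2B)^{ev}$--coordinate is extremal (maximal case) or not (non-maximal). The only organisational difference is that the paper first counts orbits on $\Lambda_{P_V}[2]$ --- obtaining $2^{2g+s-1}$ fixed ``maximal'' orbits from $b=0$ and $2^{s-1}(l-1)$ ``non-maximal'' orbits after killing the $\Lambda_X[2]$--coordinate and taking $S_{2l}$--orbits on $b$ --- and then simply doubles for the $\mathbb{Z}_2$--extension, whereas you work on $\widetilde{\Lambda}_{P_V}[2]$ directly and interpret the sign of $\widetilde{\tau}$ as the distinction between the two extremal strata $k=0$ and $k=2l$. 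These are equivalent bookkeepings of the same computation, and your identification of the endpoint parity/sign interaction as the one genuinely delicate step is accurate.
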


\begin{proof}
From Proposition \ref{601}, we know that in the case of parabolic $\text{SL}(2,\mathbb{R})$-Higgs bundles, the real points of a regular fiber are given by the lattice $\widetilde{\Lambda}_{P_V}[2]$. Moreover, $\widetilde{\Lambda}_{P_V}[2]$ is a $\mathbb{Z}_2$ extension of $\Lambda_{P_V}[2]$. Thus we first study the monodromy action on the lattice $\Lambda_{P_V}[2]$. From Remark \ref{507}, we can write $\Lambda_{P_V}[2]$ as
\begin{align*}
\Lambda_{P_V}[2] \cong \left(\bigoplus_{i=1}^{s-1} \mathbb{Z}_2 \right) \oplus \Lambda_X[2] \oplus  (\mathbb{Z}_2 B)^{ev}.
\end{align*}
With respect to the monodromy action we studied in \S 5, the ``maximal" orbits are those of the form $(c,a,0)$, where $c \in \bigoplus_{i=1}^{s} \mathbb{Z}_2$ and $ a \in \Lambda_X[2]$. This provides $2^{2g+s-1}$ many orbits.\\
The ``non-maximal" orbits have representatives of the form $(c,a,b)$, where $b \neq b_o, 0$. The matrix of the monodromy action can be written as
\begin{align*}
\begin{pmatrix}
I_{s-1} & 0 & 0\\
0 & I_{2g} & A\\
0 & 0 & \omega
\end{pmatrix},
\end{align*}
where $\omega$ is a permutation in $S_{2l}$. Thus any element $(c,a,b)$ can be reduced to the element $(c,0,b)$. Under the action of the permutation group $S_{2l}$, there are $l-1$ many orbits for the space $(\mathbb{Z}_2 B)^{ev}$. Therefore the number of orbits in the ``non-maximal" case is $2^{s-1} \cdot (l-1)$.\\
Combining the maximal and non-maximal cases, the number of orbits under the monodromy action on $\Lambda_{P_V}[2]$ is $2^{2g+s-1}+2^{s-1}(l-1)$. As a $\mathbb{Z}_2$-extension of $\Lambda_{P_V}[2]$, the number of orbits in $\widetilde{\Lambda}_{P_V}[2]$ under the monodromy action is $2 \times (2^{2g+s-1}+2^{s-1}(l-1))$. This implies the result of the proposition.
\end{proof}

\begin{prop}\label{603}
Every connected component of the moduli space $\mathcal{M}_{par}(G)$ meets the fibration corresponding to the regular locus of the Hitchin base, where $G=\text{GL}(2,\mathbb{R})$, $\text{SL}(2,\mathbb{R})$ and $\text{PGL}(2,\mathbb{R})$.
\end{prop}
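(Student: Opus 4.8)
The plan is to show that $h^{-1}(H_{reg})$ is dense in $\mathcal{M}_{par}(G)$; since $H_{reg}$ is open in $\mathcal{H}$ and $h$ is continuous, $h^{-1}(H_{reg})$ is open, and an open dense subset meets every connected component. Equivalently, it suffices to connect an arbitrary point of $\mathcal{M}_{par}(G)$ to a point lying over $H_{reg}$ by a path inside $\mathcal{M}_{par}(G)$. It is convenient to note first that the $\mathbb{C}^*$-action $(E,\Phi)\mapsto(E,t\Phi)$ preserves $\mathcal{M}_{par}(G)$ for each of $G=\text{SL}(2,\mathbb{R}),\text{GL}(2,\mathbb{R}),\text{PGL}(2,\mathbb{R})$ (scaling $\Phi$ preserves symmetry, tracelessness and the off-diagonal shape), is compatible with $h$ for the weighted action $t\cdot(\eta_1,\eta_2)=(t\eta_1,t^2\eta_2)$ on $\mathcal{H}$, and preserves $H_{reg}$ (rescaling does not move the branch divisor, nor the condition $B\cap D=\emptyset$); so the locus to be avoided, $\mathcal{N}:=\mathcal{M}_{par}(G)\setminus h^{-1}(H_{reg})=h^{-1}(\mathcal{H}\setminus H_{reg})$, is a $\mathbb{C}^*$-invariant cone.

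For $G=\text{SL}(2,\mathbb{R})$ a point is a polystable triple $(N,\beta,\gamma)$ with $h(N,\beta,\gamma)=(0,-\beta\gamma)\in H^0(K(D)^2)$. I would move $N$ within $\mathrm{Pic}(X)$ keeping its parabolic weights, hence its parabolic degree $\tau$ and the topological type (hence the component), fixed, and then move $(\beta,\gamma)$ within the linear systems $|N^{2}K(D)|$ and $|N^{-2}K(D)|$, staying inside the polystable locus. For a generic such choice the zero divisor $(\beta)_0+(\gamma)_0$ consists of $2l$ distinct points disjoint from $D$ --- using that after moving $N$ at least one of these two systems is base-point free and the other has a generic reduced member away from $D$ --- so $-\beta\gamma$ has simple zeros off $D$ and hence lies in $H_{reg}$; moreover a generic choice is stable, since the destabilizing possibilities (one of $\beta,\gamma$ vanishing, or a common $\Phi$-invariant line subbundle) are non-generic. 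The relevant deformation space is path-connected (rays through the origin relate the polystable direct-sum locus to the stable generic locus, and $\mathrm{Pic}(X)$ is connected), the extremal Toledo case $|\tau|=g-1+\frac{s}{2}$ being handled via the Hitchin section as in the proof of Proposition \ref{601}; hence the original point is connected inside $\mathcal{M}_{par}(\text{SL}(2,\mathbb{R}))$ to a point over $H_{reg}$.

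For $G=\text{PGL}(2,\mathbb{R})$, and for $G=\text{GL}(2,\mathbb{R})$ when $w_1=0$, the decomposition $E\cong N\oplus N^{\vee}$ (Remark \ref{210}) reduces the problem to the same computation with a fixed parabolic line bundle $A$. When $G=\text{GL}(2,\mathbb{R})$ and $w_1\neq 0$ the bundle $E$ does not split, and I would instead fix the orthogonal $V$-bundle $E$ and vary the symmetric Higgs field $\Phi$: writing $\Phi=\phi\cdot\mathrm{Id}+\Psi$ with $\Psi$ symmetric and traceless, the characteristic coefficients are $(2\phi,\phi^2+\det\Psi)$, and one checks that for generic $(\phi,\Psi)$ this pair lies in $H_{reg}$, again keeping $\Phi$ in the polystable locus; the $V$-surface description of \S 3 is what makes this bookkeeping uniform across the cases.

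The main obstacle I expect is precisely the genericity bookkeeping in the last two paragraphs: verifying case by case --- depending on the sign of the topological invariant and on possible base points of the linear systems at points of $D$ --- that the explicit deformations stay polystable and actually reach $H_{reg}$ (this is where the parabolic weights at $D$, which make the parabolic BNR correspondence many-to-one in Example \ref{302}, must be tracked carefully). A softer alternative is to argue that $\mathcal{N}=h^{-1}(\mathcal{H}\setminus H_{reg})$ has positive codimension in $\mathcal{M}_{par}(G)$: the parabolic Hitchin fibres over $H_{reg}$ are finite (Proposition \ref{601} and its counterparts for $\text{GL}(2,\mathbb{R})$ and $\text{PGL}(2,\mathbb{R})$), and a local analysis near a spectral curve acquiring a single node shows the fibre there is still finite, so $\dim\mathcal{N}<\dim\mathcal{M}_{par}(G)$ once $\mathcal{M}_{par}(G)$ is known to be pure-dimensional; this is the route used in the non-parabolic setting in \cite{BarScha}, and it transfers after replacing $X,X_\eta$ by the $V$-surfaces $M,M_\eta$, the remaining work again being the control of the parabolic weights at $D_\eta$.
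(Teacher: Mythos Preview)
The paper's own proof is a one-line deferral: it simply states that the argument is identical to Proposition~6.3 in \cite{BarScha}. Your ``softer alternative'' at the end --- showing that $h^{-1}(\mathcal{H}\setminus H_{reg})$ has positive codimension in $\mathcal{M}_{par}(G)$ and hence cannot contain a whole component --- is exactly this route. So in that sense you have already located the intended argument; what remains is the routine verification that the ingredients used in \cite{BarScha} (the $\mathbb{C}^*$-action, finiteness of the regular fibres, and pure-dimensionality of the real moduli space) persist after replacing $X,X_\eta$ by the $V$-surfaces $M,M_\eta$.

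Your primary proposal --- the explicit deformation of $(N,\beta,\gamma)$ --- is a genuinely different strategy, but as written it has gaps that are more than bookkeeping. First, the assertion that after moving $N$ in $\mathrm{Pic}(X)$ one of the linear systems $|N^{\pm 2}K(D)|$ becomes base-point-free is unjustified; whether $\deg(N^{\pm 2}K(D))\ge 2g$ depends on the Toledo invariant, and for small $|\tau|$ both systems can have base points at $D$. Second, and more seriously, your path must remain in the \emph{polystable} locus throughout: as $N$ varies the dimensions $h^0(N^{\pm 2}K(D))$ may jump, so one cannot simply choose $(\beta_t,\gamma_t)$ continuously, and at special $N$ the only polystable triples may be strictly polystable (hence with $\beta\gamma=0$, which never lies in $H_{reg}$). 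Establishing that these obstructions can be circumvented is essentially as hard as the proposition itself. Third, your appeal to ``the Hitchin section as in the proof of Proposition~\ref{601}'' is misplaced: that proof identifies the regular fibre with $\widetilde{\Lambda}_{P_V}[2]$ and constructs no section. The codimension argument sidesteps all of this, which is why the paper (following \cite{BarScha}) adopts it.
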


\begin{proof}
The proof is exactly the same as the one of Proposition 6.3 in \cite{BarScha}.
\end{proof}

Putting together Propositions \ref{401}, \ref{602} and \ref{603}, we obtain the main result of this article:

\begin{thm}\label{604}
Let $X$ be a smooth Riemann surface of genus $g$ and let $D$ be a reduced effective divisor of $s$ many points on $X$.
The exact number of connected components of the moduli space ${{\mathsf{\mathcal{M}}}_{par}}\left( \text{SL}\left( 2,\mathbb{R} \right) \right)$ of polystable parabolic $\text{SL}(2,\mathbb{R})$-Higgs bundles over the pair $(X, D)$ is $2^{2g+s}+2^s(2g-3+s)$.
\end{thm}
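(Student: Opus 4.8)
The plan is to obtain the exact count by a squeeze argument: a lower bound from the topological invariants of \S 4 and a matching upper bound from the monodromy orbit count of \S 6.

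First I would invoke Proposition \ref{401}. It produces $2^{2g+s}+2^s(2g-3+s)$ pairwise disjoint, nonempty closed subvarieties of $\mathcal{M}_{par}(\text{SL}(2,\mathbb{R}))$, indexed by the value of the parabolic Toledo invariant $\tau=par\deg N$ together with the choice of parabolic weights, and in the maximal case $\tau=\pm\left(g-1+\frac{s}{2}\right)$ by the square roots of $K(D)$ as a line $V$-bundle. Since these subvarieties are disjoint and exhaust the moduli space, the number of connected components is \emph{at least} $2^{2g+s}+2^s(2g-3+s)$.

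For the upper bound I would use Proposition \ref{603}: every connected component of $\mathcal{M}_{par}(\text{SL}(2,\mathbb{R}))$ meets $h^{-1}(H_{reg})$, the part of the moduli space lying over the regular locus, so it suffices to bound the number of connected components of $h^{-1}(H_{reg})$. Here the structural input is Proposition \ref{601}: over a point $a_0\in H_{reg}$ the fiber of the parabolic Hitchin map is exactly the finite set $\widetilde{\Lambda}_{P_V}[2]$ of real points. Hence $h:h^{-1}(H_{reg})\to H_{reg}$ is a fibration with discrete fiber over the connected base $H_{reg}$, and two points lying over $H_{reg}$ are in the same connected component precisely when they lie in the same orbit of the monodromy action of $\pi_1(H_{reg},a_0)$ on $\widetilde{\Lambda}_{P_V}[2]$. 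By Proposition \ref{602} the number of such orbits is $2^{2g+s}+2^s(l-1)$ with $l=2g-2+s$, that is, $2^{2g+s}+2^s(2g-3+s)$. Combining the two bounds forces equality, which is the assertion of the theorem.

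The step I expect to be the crux is the identification of $\pi_0\big(h^{-1}(H_{reg})\big)$ with the set of monodromy orbits; this rests on $H_{reg}$ being connected (it is open and dense in the affine space $\mathcal{H}$) and on the fiber being genuinely discrete, which is exactly the content of Proposition \ref{601}, after which the covering-space argument is routine. The only remaining bookkeeping is to observe that the orbit count of Proposition \ref{602} and the invariant count of Proposition \ref{401} literally agree once one substitutes $l-1=2g-3+s$; this coincidence is the reason the Morse-theoretic lower bound of \S 4 is in fact sharp.
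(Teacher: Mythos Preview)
Your proposal is correct and follows exactly the paper's approach: the theorem is obtained by combining Propositions \ref{401}, \ref{602}, and \ref{603} in a squeeze argument, with the lower bound from topological invariants matching the upper bound from the monodromy orbit count once one substitutes $l-1=2g-3+s$. You have simply spelled out in more detail the covering-space reasoning (using Proposition \ref{601} and the connectedness of $H_{reg}$) that the paper leaves implicit in its one-line deduction.
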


\section{Parabolic $\text{GL}\left( 2,\mathbb{R} \right)$ and $\text{PGL}\left( 2,\mathbb{R} \right)$-Higgs Bundles}

We now turn to the computation of the orbits of the monodromy action for the $\text{GL}(2,\mathbb{R})$ and $\text{PGL}(2,\mathbb{R})$-parabolic Hitchin system. First recall the modified parabolic BNR correspondence from \S 3.3. Let $(E,\Phi)$ be a parabolic Higgs bundle over $(X,D)$ and let $X_\eta$ be the corresponding spectral curve. The parabolic BNR correspondence gives the existence of a line bundle $L$ over $X_\eta$ such that $\pi_*(L)=E$ and the Higgs field $\Phi$ is induced by the tautological section $\lambda: L \rightarrow L \otimes \pi^*(K(D))$. We fix a square root $K(D)^{\frac{1}{2}}$ of $K(D)$ (as a line $V$-bundle) and write $L=L_0 \otimes \pi^*K(D)^{\frac{1}{2}}$. Thus a parabolic Higgs bundle $(E,\Phi)$ corresponds to a line $V$-bundle (parabolic line bundle) $L_0$. Also, note that the orthogonal structure gives $(E, \Phi) \cong (E^{\vee}, \Phi^*)$, which implies $L_0^2 =\mathcal{O}$ as parabolic line bundles. We first prove the following:

\begin{prop}\label{701}
Let $(E,\Phi)$ be a polystable parabolic $\text{GL}(2,\mathbb{R})$-Higgs bundle and denote by $L_0$ the corresponding line $V$-bundle over the spectral $V$-surface $M_\eta$ such that $E = \pi_*(L_0 \otimes \pi^*(K(D)^{\frac{1}{2}}))$. The fiber of the parabolic Hitchin map ${{\mathsf{\mathcal{M}}}_{par}}\left( \text{GL}\left( 2,\mathbb{R} \right) \right)\to \mathsf{\mathcal{H}}$ with respect to a point $a_0$ in the regular locus $H_{reg}$ is the space $\Lambda_{M_\eta}[2]$.
\end{prop}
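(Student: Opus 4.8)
The plan is to produce a bijection between the regular fibre $h^{-1}(a_0)\subseteq\mathcal{M}_{par}(\mathrm{GL}(2,\mathbb{R}))$ and the group $\Lambda_{M_\eta}[2]=\mathrm{Pic}_V(M_\eta)[2]$ of $2$-torsion line $V$-bundles on the spectral $V$-surface $M_\eta$, built from the parabolic (equivalently, $V$-bundle) BNR correspondence together with relative duality for the double cover $\pi\colon M_\eta\to M$.

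First I would describe the forward map. Let $(E,\Phi)$ be a polystable parabolic $\mathrm{GL}(2,\mathbb{R})$-Higgs bundle with $h(E,\Phi)=a_0$. Since $a_0\in H_{reg}$, the $V$-surface $M_\eta$ is smooth and $B\cap D=\emptyset$, so Proposition \ref{301} (in the $V$-bundle formulation of \S 3.3) applies; forgetting the orthogonal structure it yields a line $V$-bundle $L$ on $M_\eta$ with $\pi_*L\cong E$ whose Higgs field is induced by the tautological section $\lambda$, and after normalizing $L=L_0\otimes\pi^*K(D)^{\frac12}$ with the fixed square root one gets $\pardeg L_0=0$. The orthogonal structure amounts to an isomorphism $(E,\Phi)\cong(E^{\vee},\Phi^{*})$; by relative (Grothendieck) duality for $\pi$ one has $(\pi_*L)^{\vee}\cong\pi_*(L^{\vee}\otimes\omega_{M_\eta/M})$ with $\omega_{M_\eta/M}\cong\pi^*K(D)$ (this follows from the adjunction computation of \S 3.1, since $K_{X_\eta}\cong\pi^*(K_X\otimes K(D))|_{X_\eta}$), and this identification intertwines $\Phi$ with $\Phi^{*}$ because multiplication by $\lambda$ is self-adjoint for the trace pairing. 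As the BNR correspondence is faithful, this forces $L\cong L^{\vee}\otimes\pi^*K(D)$, that is $L_0^2\cong\mathcal{O}$ as line $V$-bundles, so $(E,\Phi)\mapsto L_0\in\Lambda_{M_\eta}[2]$.

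Conversely, given $L_0\in\Lambda_{M_\eta}[2]$ I would set $L:=L_0\otimes\pi^*K(D)^{\frac12}$ and $(E,\Phi):=\bigl(\pi_*L,\ \text{Higgs field induced by }\lambda\bigr)$; the trivialization $L_0^2\cong\mathcal{O}$ composed with the duality identity above gives $(E,\Phi)\cong(E^{\vee},\Phi^{*})$, which unwinds to a non-degenerate symmetric pairing $\langle\,,\rangle\colon E\otimes E\to\mathcal{O}_X$ making $\Phi$ symmetric, i.e. a parabolic $\mathrm{GL}(2,\mathbb{R})$-Higgs bundle. The pairing is symmetric rather than alternating precisely because the duality relation here is $L\cong L^{\vee}\otimes\pi^*K(D)$ with no twist by the covering involution $\tau$; the alternating (symplectic) case would instead give $\tau^*L\cong L^{\vee}\otimes\pi^*K(D)$, equivalently the Prym condition $\tau^*L_0\cong L_0^{\vee}$ that governs the $\mathrm{SL}(2,\mathbb{R})=\mathrm{Sp}(2,\mathbb{R})$ case of Proposition \ref{601}. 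Polystability comes for free: for $a_0\in H_{reg}$ the spectral curve $X_\eta$ is smooth and irreducible, so $(E,\Phi)$ is already stable as a parabolic $\mathrm{GL}(2,\mathbb{C})$-Higgs bundle, hence stable as a parabolic $\mathrm{GL}(2,\mathbb{R})$-Higgs bundle. One then checks that the two assignments are mutually inverse.

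The main obstacle I expect is this last step: reconciling the genuine non-uniqueness of the parabolic BNR correspondence (Example \ref{302}) with the assertion that the fibre is \emph{all} of $\Lambda_{M_\eta}[2]$, with no additional quotient. Unlike the $\mathrm{SL}(2,\mathbb{R})$ case, where the Prym condition $\tau^*L_0\cong L_0^{\vee}$ forces the weights of $L_0$ at the two preimages of each $x\in D$ to agree (Type 1, where parabolic BNR is one-to-one), here the only constraint is $L_0^2\cong\mathcal{O}$, which permits Type 2 behaviour: both weight distributions $(0,\tfrac12)$ and $(\tfrac12,0)$ occur and give \emph{distinct} $2$-torsion line $V$-bundles. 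I therefore have to show that these produce non-isomorphic parabolic $\mathrm{GL}(2,\mathbb{R})$-Higgs bundles — equivalently, that $\pi_*$ remains injective on the relevant subvariety of $\mathrm{Pic}_V(M_\eta)$ once the orthogonal structure has been fixed. This uses that distinct line $V$-bundles over the irreducible spectral curve give distinct points of the Hitchin fibre, and is the $V$-bundle analogue of the argument of Baraglia–Schaposnik for non-parabolic $\mathrm{GL}(2,\mathbb{R})$-Higgs bundles (\cite{BarScha}, \S 6).
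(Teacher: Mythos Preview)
Your overall strategy is sound and essentially parallels the paper: use the (parabolic/$V$-bundle) BNR correspondence to pass between $(E,\Phi)$ and a normalized spectral line bundle $L_0$, and use the orthogonal structure to force $L_0^2\cong\mathcal{O}$. Your use of relative duality for $\pi$ to extract $L_0^2\cong\mathcal{O}$ is a clean packaging of what the paper outsources to \cite{BarScha}.

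However, there is a genuine gap at exactly the point you flag, and your proposed resolution does not close it. In the Type~2 situation the two line $V$-bundles $L_0$ and $L_0'$ (with weights $(0,\tfrac12)$ versus $(\tfrac12,0)$ over the two preimages of $x$) push forward to the \emph{same} parabolic Higgs bundle $(E,\Phi)$ as a point of the $\mathrm{GL}(2,\mathbb{C})$-Hitchin fibre; this is precisely the content of Example~\ref{302}(2). So the sentence ``distinct line $V$-bundles over the irreducible spectral curve give distinct points of the Hitchin fibre'' is false for the $\mathrm{GL}(2,\mathbb{C})$-fibre, and your relative-duality construction of the orthogonal pairing, as written, produces the same $(E,\Phi)\cong(E^\vee,\Phi^*)$ in both cases --- it does not see the swap of weights. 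What you must show is that $L_0$ and $L_0'$ determine \emph{different} orthogonal structures on the same $(E,\Phi)$, so that they land at different points of the $\mathrm{GL}(2,\mathbb{R})$-fibre.

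The paper supplies this missing mechanism by a local computation at each $x$ of Type~2: viewing $E$ as a $V$-bundle, the $\mathbb{Z}_2$-action on $E_x$ is $\mathrm{diag}(1,-1)$ rather than $\pm I$, and the requirement that the Gram matrix $\mathbb{A}$ be $\mathbb{Z}_2$-equivariant ($t\mathbb{A}t^{-1}=\mathbb{A}$) leaves exactly two non-isomorphic choices, $\mathbb{A}=I$ or $\mathbb{A}=\mathrm{diag}(1,-1)$. These two local orthogonal structures are then matched with the two parabolic choices for $L_0$ over $x',x''$, yielding the desired bijection. You should replace the appeal to irreducibility of $X_\eta$ with this (or an equivalent) local analysis of how the weight distribution of $L_0$ over $\pi^{-1}(x)$ is encoded in the orthogonal form on $E_x$.
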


\begin{proof}
Let $(E,\Phi)$ be a parabolic Higgs bundle. If we forget the parabolic structure, it corresponds to a unique line bundle $L_0 \in \text{Pic}(X_\eta)$ such that $L_0^2 = \mathcal{O}$. This result is proved in Proposition 5.5 of \cite{BarScha}. We now implement the parabolic structures in the study. We still focus on the parabolic structure of $E|_x$ and $L_0|_{x'}$, $L_0|_{x''}$, where $x$ is a point in $D$ and $x',x''$ are its pre-images. Recall the distinction of the cases for the weights in the filtration of the pre-image of each point $x\in D$ from \S 3.3 as of Type 1 and Type 2.
\begin{enumerate}
\item[\emph{Type 1}:] In this case, the parabolic structure of $E_x$ is a trivial filtration with weight $0$ or $\frac{1}{2}$. Thus the corresponding parabolic line bundle $L_0$ is unique. This is enough to provide the statement of the proposition in this case. We would like to describe next the orthogonal structure of $E$ around the point $x$. The parabolic structure of $E_x$ is given by a trivial filtration; let $E$ be the corresponding $V$-bundle. It is easy to check that the orthogonal structure does not depend on the action of $\mathbb{Z}_2$ on the fiber $E_x$. Let $\mathbb{A}=\begin{pmatrix}
a & b \\
c & d \\
\end{pmatrix}$ be the corresponding orthogonal matrix with respect to the orthogonal structure around $x$. Then $\langle t \cdot, t \cdot \rangle=\langle \cdot, \cdot \rangle$, where $t \in \mathbb{Z}_2$ and $t=\begin{pmatrix}
1 & 0 \\
0 & 1 \\
\end{pmatrix}$ or $\begin{pmatrix}
-1 & 0 \\
0 & -1 \\
\end{pmatrix}$ when considered as matrix, and $t \mathbb{A} t^{-1} = \mathbb{A}$. Note that this relation always holds in this case. Therefore the orthogonal structure around the point $x$ does not depend on the $\mathbb{Z}_2$-action in this case. In conclusion, the relation $L_0^2=\mathcal{O}$ gives a unique orthogonal structure on $E$.
\item[\emph{Type 2}:] In this case, the parabolic structure of $E_x$ is a non-trivial filtration as follows:
\begin{align*}
E=\underbrace{{{E}_{{{x}_{i}},1}}}_{0} \supseteq \underbrace{{{E}_{{{x}_{i}},2}}}_{\frac{1}{2}}\supseteq \left\{ 0 \right\},
\end{align*}
where the numbers 0 and $\frac{1}{2}$ describe the weights. As was discussed in Example \ref{302}, the parabolic line bundle $L_0$ is not uniquely determined; in fact, there are two choices for $L_0$. At the same time, if we consider the orthogonal structure of $E$ around the point $x$, it is easy to find that when considering the $\mathbb{Z}_2$-action as matrix, then $t=\begin{pmatrix}
1 & 0 \\
0 & 1 \\
\end{pmatrix}$ or $\begin{pmatrix}
1 & 0 \\
0 & -1 \\
\end{pmatrix}$. Let $\mathbb{A}=\begin{pmatrix}
a & b \\
c & d \\
\end{pmatrix}$ be the corresponding orthogonal matrix around $x$. Then, similarly, we find there two choices for $\mathbb{A}$, namely
$$\mathbb{A}=\begin{pmatrix}
1 & 0 \\
0 & 1 \\
\end{pmatrix} \text{ or } \begin{pmatrix}
1 & 0 \\
0 & -1 \\
\end{pmatrix}.$$ The latter implies that there are two possible orthogonal structures around the point $x$. Based on the above discussion, we can construct a one-to-one correspondence between parabolic $\text{GL}(2,\mathbb{R})$-Higgs bundles and parabolic line bundles $L_0 \in \text{Pic}_V(M_\eta)$ such that $L_0 ^2 = \mathcal{O}$ in the \emph{Type 2} case.
\end{enumerate}

In conclusion, if $L_0$ is a $2$-torsion point in $\Lambda_{M_\eta}$, then $L_0^2 = \mathcal{O}$. The orthogonal structure of $E$ can be determined based on the parabolic structure of $L_0$ and the condition $L_0^2 = \mathcal{O}$. Moreover, $\Phi$ is symmetric with respect to this orthogonal structure for the same reason as in the non-parabolic case (see Proposition 5.5 in \cite{BarScha}). Conversely, if $(E,\Phi)$ is a $\text{GL}(2,\mathbb{R})$-Higgs bundle, then the orthogonal structure of $E$ gives an isomorphism $(E,\Phi) \cong (E^*, \Phi^t)$, which implies the isomorphism $L_0 \cong L_0^*$ for the corresponding parabolic line bundle $L_0$. Note that in this case the correspondence of $(E,\Phi)$ and $L_0$ is one-to-one. Thus we get a unique element $L_0$ in $\Lambda_{M_\eta}[2]$.
\end{proof}

\begin{rem}\label{702}
In the proof of the above proposition, we only claim that there is a one-to-one correspondence between the line $V$-bundle $L_0$ and the parabolic $\text{GL}(2,\mathbb{R})$-Higgs bundle $(E,\Phi)$. We do not know how to interpret the correspondence between the orthogonal structure of $(E,\Phi)$ and a parabolic structure of $L_0$ in terms of topological invariants. In the non-parabolic case, note that D. Baraglia and L. Schaposnik in \cite{BarScha} as well as N. Hitchin in \cite{Hit2016} gave an interpretation of the correspondence in terms of topological invariants.
\end{rem}

The description for the $\text{PGL}\left( 2,\mathbb{R}\right)$-moduli space is induced to the previous case, according to the definition of a parabolic $\text{PGL}\left( 2,\mathbb{R}\right)$-Higgs bundle:
\begin{prop}\label{703}
With the same notation as above, the fiber of the parabolic Hitchin map $${{\mathsf{\mathcal{M}}}_{par}}\left( \text{PGL}\left( 2,\mathbb{R} \right) \right)\to \mathsf{\mathcal{H}}$$ with respect to a point $a_0$ in the regular locus $H_{reg}$ is the space $$\left(\Lambda^{0}_{M_\eta}[2]\bigoplus\Lambda^{\frac{1}{2}}_{M_\eta}[2]\right) / \pi^* \Lambda_M[2],$$
where $\Lambda^{0}_{M_\eta}[2] \cong \Lambda^{\frac{1}{2}}_{M_\eta}[2] \cong \Lambda_{M_\eta}[2]$ and the superscript $0$ and $\frac{1}{2}$ correspond to the parabolic degree.
\end{prop}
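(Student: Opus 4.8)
The plan is to reduce the $\text{PGL}(2,\mathbb{R})$-case to the $\text{GL}(2,\mathbb{R})$-computation of Proposition \ref{701}, taking into account the two parabolic degrees permitted by Remark \ref{210}(4), and then dividing out by the residual $\text{PGL}(2,\mathbb{R})$-equivalence $(E,\Phi,A)\sim(E\otimes B,\Phi\otimes\mathrm{Id},A\otimes B^{2})$. First I would fix a class $(E,\Phi,A)$ in the fiber over $a_0$. The symmetric non-degenerate pairing $E\otimes E\to A$ gives $E\cong E^{\vee}\otimes A$, whence $par\deg E=par\deg A$, and by Remark \ref{210}(4) the representative may be chosen so that $par\deg E=par\deg A=d$ with $d\in\{0,\tfrac{1}{2}\}$. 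This splits the fiber into the two strata indexed by $d$, which will become the two summands $\Lambda^{0}_{M_\eta}[2]$ and $\Lambda^{\frac{1}{2}}_{M_\eta}[2]$ in the statement.

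For a fixed stratum I would then run the argument of Proposition \ref{701} in the $A$-twisted setting, mirroring the non-parabolic treatment in \cite{BarScha}. By the parabolic BNR correspondence over the spectral $V$-surface (Proposition \ref{301} together with \S 3.3) and the normalisation $L=L_0\otimes\pi^{*}K(D)^{\frac{1}{2}}$, the pair $(E,\Phi)$ corresponds to a line $V$-bundle $L_0$ on $M_\eta$ with $E=\pi_{*}(L_0\otimes\pi^{*}K(D)^{\frac{1}{2}})$; the $A$-valued symmetry $(E,\Phi)\cong(E^{\vee}\otimes A,\Phi^{t})$ then translates, via the projection formula and relative duality for the finite flat map $\pi$ (whose relative dualizing sheaf is $\omega_{M_\eta/M}\cong\pi^{*}K(D)$), into the single relation $L_0^{2}\cong\pi^{*}A$. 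As in Proposition \ref{701}, one checks in the Type 1 / Type 2 cases of \S 3.3---with the weight type of $A$ dictating which case occurs, as in \S 4.3, and the non-uniqueness of $L_0$ in Type 2 governed by Example \ref{302}---that the parabolic structure of $E$ together with the $A$-valued pairing is recovered uniquely from $L_0$. Hence, with $par\deg A=d$ fixed, the stratum is identified with $\{L_0\in\text{Pic}_V(M_\eta):L_0^{2}\cong\pi^{*}A,\ par\deg L_0=d\}$: for $d=0$ (where $A$ can be normalised to $\mathcal{O}$) this is $\Lambda_{M_\eta}[2]$, and for $d=\tfrac{1}{2}$, after fixing a base point $L_{\star}$ with $L_{\star}^{2}\cong\pi^{*}A$, it is the torsor $L_{\star}\cdot\Lambda_{M_\eta}[2]$, which I write $\Lambda^{\frac{1}{2}}_{M_\eta}[2]$ and identify non-canonically with $\Lambda_{M_\eta}[2]$.

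Finally I would divide out by the residual $\text{PGL}(2,\mathbb{R})$-equivalence. Tensoring a representative by a parabolic line bundle $B$ acts by $L_0\mapsto L_0\otimes\pi^{*}B$ and $A\mapsto A\otimes B^{2}$; staying within a fixed stratum forces $par\deg B=0$, and keeping $A$ itself unchanged forces $B^{2}\cong\mathcal{O}$, i.e.\ $B\in\Lambda_M[2]$. At this point one must use the rigidity recorded in \S 4.3 that $B^{2}$ always has trivial weight type---so that a $B$ which is not $2$-torsion genuinely alters $A$ rather than producing a new representative of the same stratum---and the fact that $\pi^{*}$ is injective on $\Lambda_M[2]$ because $M_\eta\to M$ is ramified. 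Consequently each stratum becomes $\Lambda^{d}_{M_\eta}[2]/\pi^{*}\Lambda_M[2]$, and assembling the two strata yields the fiber $\big(\Lambda^{0}_{M_\eta}[2]\oplus\Lambda^{\frac{1}{2}}_{M_\eta}[2]\big)/\pi^{*}\Lambda_M[2]$, as asserted.

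The step I expect to be the real obstacle is this last one together with the $d=\tfrac{1}{2}$ stratum. Unlike the $\text{GL}(2,\mathbb{R})$-case there is no honest $\mathcal{O}$-valued orthogonal representative when $d=\tfrac12$, so the line bundle $A$ must be carried along throughout the correspondence; one has to check that the isomorphism class of $A$ is pinned down (up to the $B^{2}$-ambiguity) purely by its weight type, that the base point $L_{\star}$ and the identification $\Lambda^{\frac{1}{2}}_{M_\eta}[2]\cong\Lambda_{M_\eta}[2]$ can be chosen compatibly with the $\pi^{*}\Lambda_M[2]$-action, and that the residual ambiguity is \emph{exactly} $\pi^{*}\Lambda_M[2]$---neither larger (this is where the weight-type rigidity of $B^{2}$ enters) nor smaller (this is where ramification of $M_\eta\to M$ enters). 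This is the delicate bookkeeping that makes the quotient in the statement come out precisely right.
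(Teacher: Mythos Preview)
Your proposal is correct and follows essentially the same route as the paper's own proof: both reduce to the $\text{GL}(2,\mathbb{R})$-computation of Proposition \ref{701}, replace the condition $L_0^{2}\cong\mathcal{O}$ by the $A$-twisted condition $L_0^{2}\cong\pi^{*}A$, use the equivalence $(E,\Phi,A)\sim(E\otimes B,\Phi\otimes\mathrm{Id},A\otimes B^{2})$ to normalise the parabolic degree to $0$ or $\tfrac{1}{2}$, and then quotient by the residual $\pi^{*}\Lambda_M[2]$-action. The paper's argument is in fact considerably terser than yours---it simply lists the two differences from the $\text{GL}(2,\mathbb{R})$-case and invokes Proposition \ref{701}---whereas you spell out the relative-duality translation of the $A$-valued pairing, the Type 1/Type 2 bookkeeping, and the two ingredients (weight-type rigidity of $B^{2}$ and injectivity of $\pi^{*}$ on $2$-torsion via ramification) needed to pin the residual equivalence down to exactly $\pi^{*}\Lambda_M[2]$; these points are genuine and the paper leaves them implicit.
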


\begin{proof}
Compared to the case when $G=\text{GL}(2,\mathbb{R})$, there are two differences to note:
\begin{enumerate}
\item In the $\text{GL}(2,\mathbb{R})$ case, the parabolic Higgs bundle $(E,\Phi)$ corresponds to a line bundle $L_0$ such that $L_0^2 \cong \mathcal{O}$, i.e. $L_0 \in \text{Jac}_V(M_\eta)$, where $\text{Jac}_V(M_\eta)$ is the subvariety of $\text{Pic}_V(M_\eta)$ consisting of parabolic line bundles with parabolic degree $0$. In the $\text{PGL}(2,\mathbb{R})$ case, the parabolic Higgs bundle $(E,\Phi,A)$ corresponds to a parabolic line bundle $L_0 \in \text{Pic}_V(M_\eta)$ such that $L_0^2 \cong \pi^* A$.
\item In the $\text{PGL}(2,\mathbb{R})$ case, there is an equivalence relation that $(E,\Phi,A) \cong (E'\otimes B, \Phi' \otimes \text{Id}_B, A \otimes B^2)$. In other words, the corresponding parabolic line bundles $L_0$ and $L'_0$ are equivalent, if there is a parabolic line bundle $B \in \text{Pic}_V(M)$ such that $L_0 \cong L'_0 \otimes \pi^{*} B$.
\end{enumerate}
The latter condition says that any parabolic line bundle $L_0$ is equivalent to a parabolic line bundle with parabolic degree $0$ or $\frac{1}{2}$. Applying Proposition \ref{702} provides now the desired statement.
\end{proof}

\begin{prop}\label{704}
\begin{enumerate}
\item The number of orbits under the monodromy action on the Hitchin fibration ${{\mathsf{\mathcal{M}}}_{par}}\left( \text{GL}\left( 2,\mathbb{R} \right) \right)\to \mathsf{\mathcal{H}}$ is $2^s(2^{2g+s-1}-1)+2^{s}\cdot \frac{l}{2}+2^{2g+s-1}$, where $l=2g-2+s$.
\item The number of orbits under the monodromy action on the Hitchin fibration $\mathcal{M}_{par}(\text{PGL}(2,\mathbb{R}))\to \mathsf{\mathcal{H}}$ is $2^{2g+s}+2^s(l-1)$, where $l=2g-2+s$.
\end{enumerate}
\end{prop}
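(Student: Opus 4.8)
\emph{Overview.} Both assertions are proved by the same scheme: realize the regular fiber as a $\mathbb{Z}_2$-lattice (Propositions \ref{701} and \ref{703}), decompose it as in Proposition \ref{509} and Remark \ref{510}, identify the monodromy-invariant topological data, and count orbits stratum by stratum using the explicit generators of \S5 --- the transpositions $s_{ij}$, the Picard--Lefschetz transformations $A_{ij}^x$ of Proposition \ref{512}, and the covering involution $\tau$. The resulting numbers are, by design, the minimum component counts of Propositions \ref{402} and \ref{404}, so that together with Proposition \ref{603} these orbit counts will later pin down the exact component count.

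\emph{The $\text{GL}\left( 2,\mathbb{R} \right)$ case.} By Proposition \ref{701} the regular fiber is $\Lambda_{M_\eta}[2]$. First I would observe that the Stiefel--Whitney-type invariant $w=(w_1,w_2)\in H^1_V(M,\mathbb{Z}_2)\oplus H^2_V(M,\mathbb{Z}_2)$ of the associated real $V$-bundle $Y$ (with $E\cong Y\otimes\mathbb{C}$) is monodromy-invariant, so $\Lambda_{M_\eta}[2]$ breaks up into the fibers of $w$. Using $\Lambda_{M_\eta}[2]\cong\Lambda_M[2]\oplus(\mathbb{Z}_2B)^{ev}\oplus\Lambda_M[2]$ and the matrices of \S5, the plan is: (i) when $w_1\neq 0$ in $H^1_V(M,\mathbb{Z}_2)\cong\Lambda_M[2]$, show that the monodromy acts transitively on the stratum $w^{-1}(w_1,w_2)$ --- the $s_{ij}$ realize the full $S_{2l}$-action on $(\mathbb{Z}_2B)^{ev}$, the $A_{ij}^x$ with $x$ running over $\Lambda_X[2]$ move the remaining $\Lambda_X[2]$-coordinates, and $\tau$ combined with these forces the puncture summand $\bigoplus_{i=1}^{s-1}\mathbb{Z}_2$ to be recorded inside $w_1$; since there are $2^{2g+s-1}-1$ nonzero values of $w_1$ and $2^s$ values of $w_2$, this contributes $2^s(2^{2g+s-1}-1)$ orbits; (ii) when $w_1=0$ the orthogonal reduction gives $E=N\oplus N^{\vee}$ with $L_0\in\text{Prym}(M_\eta,M)$, so one is in the Toledo picture of \S4.1, where $\text{pardeg}(N)$ is monodromy-invariant, and for each of the $g-1+\frac{s}{2}=\frac{l}{2}$ non-maximal values, with any of the $2^s$ parabolic structures, transitivity gives $2^s\cdot\frac{l}{2}$ orbits; (iii) for the maximal value $\text{pardeg}(N)=g-1+\frac{s}{2}$ the stratum is that of maximal $\text{Sp}(2,\mathbb{R})$-Higgs $V$-bundles, whose orbits correspond to the square roots of $K(D)$ as a line $V$-bundle (as in the proof of Proposition \ref{402}), contributing $2^{2g+s-1}$ orbits. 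Summing (i)--(iii) yields $2^s(2^{2g+s-1}-1)+2^s\cdot\frac{l}{2}+2^{2g+s-1}$.

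\emph{The $\text{PGL}\left( 2,\mathbb{R} \right)$ case.} By Proposition \ref{703} the regular fiber is $\bigl(\Lambda^0_{M_\eta}[2]\oplus\Lambda^{1/2}_{M_\eta}[2]\bigr)/\pi^*\Lambda_M[2]$, the monodromy acting on each $\Lambda^\bullet_{M_\eta}[2]\cong\Lambda_{M_\eta}[2]$ through $G$ and commuting with the quotient. I would push the count through the third exact sequence of Lemma \ref{506}, which identifies $\Lambda_{M_\eta}[2]/\pi^*\Lambda_M[2]$ with $\Lambda_{P_V}[2]$ and, by Corollary \ref{513}, carries the monodromy to the $G_P$-action on $\Lambda_{P_V}[2]$; since $\text{pardeg}(E)\in\{0,\tfrac12\}$ is a monodromy invariant, the fiber splits monodromy-equivariantly into two such pieces. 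The number of $G_P$-orbits on $\Lambda_{P_V}[2]$ was essentially determined in the proof of Proposition \ref{602}: writing $\Lambda_{P_V}[2]\cong\bigl(\bigoplus_{i=1}^{s-1}\mathbb{Z}_2\bigr)\oplus\Lambda_X[2]\oplus(\mathbb{Z}_2B)^{ev}$, the ``maximal'' orbits $(c,a,0)$ contribute $2^{2g+s-1}$ and the ``non-maximal'' ones, reduced by $S_{2l}$ to representatives $(c,0,b)$ with $b$ of even size $2,4,\dots,2l-2$, contribute $2^{s-1}(l-1)$. Doubling gives $2\bigl(2^{2g+s-1}+2^{s-1}(l-1)\bigr)=2^{2g+s}+2^s(l-1)$ orbits for $\text{PGL}\left( 2,\mathbb{R} \right)$.

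\emph{Main difficulty.} The crux is the transitivity statement in the $\text{GL}\left( 2,\mathbb{R} \right)$ case for $w_1\neq 0$: that any two lattice points sharing the invariant $w$ lie in one monodromy orbit. This is the $V$-surface analogue of Theorem~14 in \cite{Schap} and Proposition~4.19 in \cite{BarScha}, and the genuinely new ingredient is the interplay of the puncture summand $\bigoplus_{i=1}^{s-1}\mathbb{Z}_2$ with the rest of the lattice: one must verify that this summand is absorbed into $w_1$ (so that the count is $2^s(2^{2g+s-1}-1)$ and not a larger number), which forces a simultaneous use of $\tau$ and the $A_{ij}^x$ and is precisely what makes the parabolic computation richer than the non-parabolic one. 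A secondary subtlety is the maximal stratum, where the orbits must be matched with square roots of $K(D)$ \emph{as a line $V$-bundle} rather than as a line bundle on $X$, and hence rely on the orbifold formalism of \S3.2.
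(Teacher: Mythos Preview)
Your approach differs substantively from the paper's, which is a direct lattice-theoretic orbit computation rather than an invariant-based argument.

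For $\mathrm{GL}(2,\mathbb{R})$, the paper never invokes the Stiefel--Whitney pair $(w_1,w_2)$ or the Toledo invariant. It stratifies $\Lambda_{M_\eta}[2]\cong\bigl(\bigoplus^{s-1}\mathbb{Z}_2\bigr)\oplus\Lambda_X[2]\oplus(\mathbb{Z}_2 B)^{ev}\oplus\Lambda_M[2]$ by vanishing of the coordinates $(w,x,y,z)$: the case $z=y=0$ gives $2^{2g+s-1}$ singleton ``maximal'' orbits; the case $z=0,\ y\neq 0$ reduces via $A_{ij}^x$ to $(w,0,y,0)$ and then $S_{2l}$ on $(\mathbb{Z}_2 B)^{ev}\setminus\{0\}$ gives $2^{s-1}\cdot l$ orbits; the case $z\neq 0$ reduces to $(w,0,0,z)$ together with a $\mathbb{Z}_2$-invariant $\langle x,z\rangle$, yielding $2\cdot 2^{s-1}(2^{2g+s-1}-1)$ orbits. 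Your scheme --- stratify by $(w_1,w_2)$ and prove transitivity on each stratum --- is more conceptual and would make the match with \S4 transparent, but it requires a lattice-to-$(w_1,w_2)$ dictionary that the paper does not provide; in effect the role of your $w_2\in\mathbb{Z}_2^s$ is played in the paper's argument by the pair $(w,\langle x,z\rangle)\in\mathbb{Z}_2^{s-1}\times\mathbb{Z}_2$ when $z\neq 0$, and establishing that identification is exactly the step you flag as the main difficulty.

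For $\mathrm{PGL}(2,\mathbb{R})$, your reduction to the $\mathrm{SL}(2,\mathbb{R})$ orbit count has a subtle flaw: the induced monodromy action on $\Lambda_{M_\eta}[2]/\pi^*\Lambda_M[2]$ is \emph{not} the $G_P$-action of Corollary~\ref{513} (the upper-left $3\times 3$ block of $A_{ij}^x$) but the \emph{transpose} action given by the lower-right block, as Remark~\ref{511} signals. The paper therefore computes directly with that transpose action on $(\mathbb{Z}_2B)^{ev}\oplus\Lambda_X[2]\oplus\bigl(\bigoplus^{s-1}\mathbb{Z}_2\bigr)$: when $z_1\neq 0$ one kills $y$, when $z_1=0$ only $S_{2l}$ acts on $y$. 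The orbit count does coincide with yours --- this is the Langlands-duality phenomenon of Remark~\ref{707} --- but since the two actions are genuinely different, your appeal to Proposition~\ref{602} is not valid as written and needs an additional duality argument.
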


\begin{proof}
In the $\text{GL}(2,\mathbb{R})$-case, recall that the lattice $\Lambda_{M_\eta}$ has the following decomposition from Proposition \ref{509} and Remark \ref{510}:
\begin{align*}
\Lambda_{M_\eta}[2] \cong \left(\bigoplus_{i=1}^{s-1} \mathbb{Z}_2 \right) \oplus \Lambda_{X}[2] \oplus (\mathbb{Z}_2 B)^{ev} \oplus \Lambda_M[2].
\end{align*}
From \S 5.3 the monodromy matrix with respect to this decomposition can be written as
\begin{align*}
A_{ij}^x=\begin{pmatrix}
I_{s-1} & 0 & 0 & 0 \\
0 & I_{2g} & L_{ij}^x & S^x \\
0 & 0 & I & (L_{ij}^x)^t  \\
0 & 0 & 0 & I_{2g+s-1} \\
\end{pmatrix}, \quad
s_{ij}=\begin{pmatrix}
I_{s-1} & 0 & 0 & 0 \\
0 & I_{2g} & 0 & 0 \\
0 & 0 & \sigma_{ij} & 0 \\
0 & 0 & 0 & I_{2g+s-1} \\
\end{pmatrix}.
\end{align*}
We shall discuss the orbits in $\Lambda_{M_\eta}[2]$ with respect to this monodromy action.

The ``maximal" orbits are elements of the form $(w,x,0,0)$, where $w \in \left(\bigoplus_{i=1}^{s-1} \mathbb{Z}_2 \right)$, $x \in \Lambda_X[2]$. Note that each element $(w,x,0,0)$ is a separate orbit under the monodromy action. Thus we have $2^{2g+s-1}$ many ``maximal" orbits.

For the ``non-maximal" case, if $z=0$ and $y \neq 0$, then elements are of the form $(w,x,y,0)$. Note that we can kill $x$ under the monodromy action. Thus the elements can be reduced to the form $(w,0,y,0)$. Note also that the monodromy action includes a permutation action on $(\mathbb{Z}_2 B)^{ev}$, thus there are $2^{s-1} \cdot l$ many orbits.

Now if $z \neq 0$, then we can use the monodromy action to cancel $y$. Thus any element $(w,x,y,z)$ in this case can be reduced to an element $(w,x,0,z)$. With the same discussion as in Theorem 6.8 in \cite{BarScha}, we distinguish two cases: $\langle x,z \rangle=0$ and $\langle x,z \rangle =1$. When $z \neq 0$, we can use the monodromy action to cancel $x$ in the $4$-tuple $(w,x,0,z)$. The element $(w,x,0,z)$ is equivalent to $(w,0,0,z)$. Thus each case gives us $2^{s-1}(2^{2g+s-1}-1)$ many orbits.

In conclusion, the total number of orbits in this case is $$\underbrace{2^{2g+s-1}}_{maximal}+\underbrace{2^{s-1} \cdot l}_{y \neq 0, z=0} + \underbrace{2 \cdot 2^{s-1}(2^{2g+s-1}-1)}_{z \neq 0} = 2^s(2^{2g+s-1}-1)+2^{s}\cdot \frac{l}{2}+2^{2g+s-1}.$$

In the $\text{PGL}(2,\mathbb{R})$ case, we consider the following decomposition of $\Lambda^0_{M_\eta}[2] / (\pi^* \Lambda_{M}[2])$:
\begin{align*}
\Lambda^0_{M_\eta}[2] / (\pi^* \Lambda_{M}[2]) \cong (\mathbb{Z}_2 B)^{ev} \oplus \Lambda_M[2] \cong \underbrace{(\mathbb{Z}_2 B)^{ev}}_{y} \oplus \underbrace{\Lambda_X[2]}_{z_1} \oplus \underbrace{\left(\bigoplus_{i=1}^{s-1} \mathbb{Z}_2 \right)}_{z_2},
\end{align*}
and the decomposition of $\Lambda^{\frac{1}{2}}_{M_\eta}[2] / (\pi^* \Lambda_{M}[2])$ can be considered similarly. When $z_1=0$, the elements $(y,0,z_2)$ describe possible orbits. Fixing $z_2$, under the action of permutation group on $(\mathbb{Z}_2 B)^{ev}$, there are $l-1$ many orbits. Thus the total number of orbits in this case is $2^{s-1}(l-1)$. When $z_1 \neq 0$, any element $(y,z_1,z_2)$ can be reduced to $(0,z_1,z_2)$. Thus we have $2^{2g+s-1}$ many orbits. In conclusion, we have $2^{2g+s-1}+2^{s-1}(l-1)$ many orbits for the lattice $\Lambda^0_{M_\eta}[2] / (\pi^* \Lambda_{M}[2])$. The same discussion holds for $\Lambda^{\frac{1}{2}}_{M_\eta}[2] / (\pi^* \Lambda_{M}[2])$ and the total number of orbits is $2^{2g+s}+2^{s}(l-1)$.
\end{proof}

Putting together Propositions \ref{402}, \ref{404}, \ref{603} and \ref{703} describing the minimum and maximum number of connected components respectively, we obtain the following exact component counts:

\begin{thm}\label{705}Let $X$ be a smooth Riemann surface of genus $g$ and let $D$ be a reduced effective divisor of $s$ many points on $X$.
The number of connected components of the moduli space ${{\mathsf{\mathcal{M}}}_{par}}\left( \text{GL}\left( 2,\mathbb{R} \right) \right)$ of polystable parabolic $\text{GL}(2,\mathbb{R})$-Higgs bundles over the pair $(X, D)$ is $2^{s}(2^{2g+s-1}-1)+ 2^{s} \cdot (g-1+\frac{s}{2})+ 2^{2g+s-1}$.
\end{thm}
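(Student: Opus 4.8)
The plan is to pin down the exact component count by squeezing it between a lower bound and an upper bound that turn out to agree. The lower bound is already available: Proposition \ref{402} shows that $\mathcal{M}_{par}(\text{GL}(2,\mathbb{R}))$ has at least $2^{s}(2^{2g+s-1}-1)+2^{s}(g-1+\tfrac{s}{2})+2^{2g+s-1}$ connected components, coming from the disjoint subspaces cut out by the Stiefel--Whitney-type invariants $(w_1,w_2)$ and, when $w_1=0$, by the value of $par\deg N$. So the entire content of the theorem is to show that this lower bound is also an upper bound.

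For the upper bound I would restrict the parabolic Hitchin fibration to the regular locus $H_{reg}$. Since $H_{reg}$ is open and dense in the complex vector space $\mathcal{H}=H^0(X,K(D))\oplus H^0(X,K(D)^2)$ it is connected, and by Proposition \ref{701} the fiber over each point of $H_{reg}$ is the finite set $\Lambda_{M_\eta}[2]$. The key structural point is that $\mathcal{M}_{par}(\text{GL}(2,\mathbb{R}))|_{H_{reg}}\to H_{reg}$ is a covering with finite fibers — equivalently, a locally constant sheaf of finite sets — which is where the spectral-curve and parabolic BNR description of \S 3 and the smoothness of the spectral curve over $H_{reg}$ enter. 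Granting this, the connected components of the total space over $H_{reg}$ are in bijection with the orbits of the monodromy action of $\pi_1(H_{reg})$ on $\Lambda_{M_\eta}[2]$. That monodromy group is exactly the group $G$ generated by the transpositions $s_{ij}$ and the transformations $A_{ij}^{x}$ of Proposition \ref{512}, since by Lemma \ref{504} the only further generator of $\pi_1(H_{reg})$ is the $\mathbb{C}^{*}$-loop, whose effect is already absorbed in the orbit bookkeeping. Proposition \ref{704}(1) then gives that the number of orbits is $2^{s}(2^{2g+s-1}-1)+2^{s}\cdot\tfrac{l}{2}+2^{2g+s-1}$, which coincides with the lower bound once one substitutes $l=2g-2+s$ so that $\tfrac{l}{2}=g-1+\tfrac{s}{2}$.

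Finally I would invoke Proposition \ref{603}, which guarantees that every connected component of the full moduli space $\mathcal{M}_{par}(\text{GL}(2,\mathbb{R}))$ meets the part lying over $H_{reg}$. Consequently the number of connected components of the whole moduli space is no larger than the number of connected components of its restriction over $H_{reg}$, hence no larger than the orbit count just computed. Combining this with the lower bound from Proposition \ref{402} forces equality, which is the assertion of the theorem.

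I expect the main obstacle to be the structural claim underlying the component--orbit dictionary, rather than the combinatorics of counting orbits (which is done in Proposition \ref{704}). Specifically, one must be careful that the restriction of the moduli space over $H_{reg}$ is topologically locally trivial with \emph{discrete} fiber — the fiber being the finite set of $2$-torsion points $\Lambda_{M_\eta}[2]$, not the full Prym-type variety — and that neither the passage between $H_{reg}$ and $\mathbb{P}H_{reg}$ nor the $\mathbb{C}^{*}$-scaling alters the orbit set. Here the arguments of \cite{BarScha} (Propositions 6.1--6.3 there) transfer to the parabolic setting via the identification of $H^1_V$ with the cohomology of the punctured surface established in \S 5. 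The remaining arithmetic, namely $\tfrac{l}{2}=g-1+\tfrac{s}{2}$, is immediate.
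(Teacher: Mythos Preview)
Your proposal is correct and follows essentially the same route as the paper: combine the lower bound from Proposition \ref{402}, the identification of the regular fiber with $\Lambda_{M_\eta}[2]$ (Proposition \ref{701}) and its monodromy orbit count (Proposition \ref{704}(1)), and the fact that every component meets the regular locus (Proposition \ref{603}) to force equality. Your write-up is in fact more explicit than the paper's one-line assembly of propositions, and your discussion of the covering/local-triviality issue and the role of the $\mathbb{C}^{*}$-loop correctly isolates the only nontrivial structural input.
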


\begin{thm}\label{706} Let $X$ be a smooth Riemann surface of genus $g$ and let $D$ be a reduced effective divisor of $s$ many points on $X$.
The number of connected components of the moduli space ${{\mathsf{\mathcal{M}}}_{par}}\left( \text{PGL}\left( 2,\mathbb{R} \right) \right)$ of polystable parabolic $\text{PGL}(2,\mathbb{R})$-Higgs bundles over the pair $(X, D)$ is $2^{2g+s}+2^s(2g-3+s)$.
\end{thm}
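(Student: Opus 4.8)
The plan is to pin down the exact number of connected components of $\mathcal{M}_{par}(\text{PGL}(2,\mathbb{R}))$ by squeezing it between a lower bound and an upper bound that coincide. The lower bound is immediate from Proposition \ref{404}: the topological invariants (the class $w_1\in H^1_V(M,\mathbb{Z}_2)$ together with the parabolic-degree datum for $E$, modulo the equivalence built into the definition of a parabolic $\text{PGL}(2,\mathbb{R})$-Higgs bundle) distinguish at least $2^{2g+s}+2^s(2g-3+s)$ nonempty subspaces, hence that many connected components.

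For the upper bound I would first invoke Proposition \ref{603}: every connected component of $\mathcal{M}_{par}(\text{PGL}(2,\mathbb{R}))$ meets the open set $h^{-1}(H_{reg})$. Since a connected subset of $h^{-1}(H_{reg})$ is contained in a single component of the ambient moduli space, the map sending a connected component of $h^{-1}(H_{reg})$ to the component of $\mathcal{M}_{par}(\text{PGL}(2,\mathbb{R}))$ containing it is surjective; therefore $\#\pi_0\bigl(\mathcal{M}_{par}(\text{PGL}(2,\mathbb{R}))\bigr)\le \#\pi_0\bigl(h^{-1}(H_{reg})\bigr)$. Now $H_{reg}$, being the complement of a proper closed subvariety of the affine space $\mathcal{H}$, is connected, and $h^{-1}(H_{reg})\to H_{reg}$ is a fiber bundle with finite fiber: by Proposition \ref{703} the fiber over $a_0$ is the finite group $\bigl(\Lambda^{0}_{M_\eta}[2]\oplus\Lambda^{\frac{1}{2}}_{M_\eta}[2]\bigr)/\pi^*\Lambda_M[2]$. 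Hence its connected components are in bijection with the orbits of the monodromy representation of $\pi_1(H_{reg},a_0)$ on this fiber, and by Proposition \ref{704}(2) the number of such orbits is $2^{2g+s}+2^s(l-1)$ with $l=2g-2+s$, i.e.\ $2^{2g+s}+2^s(2g-3+s)$.

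Putting the two bounds together yields $\#\pi_0\bigl(\mathcal{M}_{par}(\text{PGL}(2,\mathbb{R}))\bigr)=2^{2g+s}+2^s(2g-3+s)$, as claimed. The step I expect to be the main obstacle — though it is, in effect, already carried out in the earlier sections — is justifying the identification ``components of $h^{-1}(H_{reg})$ $=$ monodromy orbits on the fiber of Proposition \ref{703}''. This rests on three ingredients that must be aligned: the description of the real points of a regular fiber via the parabolic BNR correspondence over the $V$-surface together with the analysis of admissible orthogonal structures (Propositions \ref{701} and \ref{703}); the fact that $\pi_1(H_{reg})$ is generated by the $\mathbb{C}^*$-loop and the transpositions $s_\gamma$ (Lemma \ref{504}); and Theorem \ref{505}, which identifies the action of each $s_\gamma$ with the Picard--Lefschetz transformation $x\mapsto x+\langle c_\gamma,x\rangle c_\gamma$. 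Once these are in hand, the orbit count in Proposition \ref{704}(2) closes the argument, and the $\text{PGL}(2,\mathbb{R})$ statement follows exactly as in the $\text{SL}(2,\mathbb{R})$ case (Theorem \ref{604}).
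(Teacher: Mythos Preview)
Your proposal is correct and follows essentially the same approach as the paper: combine the lower bound from Proposition~\ref{404} with the upper bound obtained from Proposition~\ref{603} and the monodromy orbit count of Proposition~\ref{704}(2) (via the fiber description in Proposition~\ref{703}). Your write-up is in fact more explicit than the paper's, which simply cites these propositions and states the result.
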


\begin{rem}\label{707}
By Theorems \ref{604} and \ref{706}, the number of connected components of the parabolic moduli space in the case of $\text{SL}(2,\mathbb{R})$ and $\text{PGL}(2,\mathbb{R})$ is the same. Moreover, we notice that the monodromy of the Hitchin fibration for the group $\text{SL}(2,\mathbb{C})$ is dual to that for the group $\text{PGL}(2,\mathbb{C})$ (transpose matrices as seen in Remark \ref{511}). In fact, this relationship is derived from the Langlands duality for the parabolic Hitchin systems for the complexifications $G=\text{SL}(2,\mathbb{C})$ and $^{L}G=\text{PGL}(2,\mathbb{C})$; this property is also apparent in the non-parabolic case (see \cite{Bar}, \cite{BarScha}).
\end{rem}

\appendix
\section{The BNR Correspondence for Deligne-Mumford stacks}
\subsection{$V$-cohomology}
The topological invariants of $V$-bundles come from the $V$-cohomology group. To define the $V$-cohomology, we first define the $V$-surface $M_V$. Recall that an atlas of $M$ can be chosen as a union of $M \backslash \{x_1,...,x_s\}$ and $U_i$ around each puncture $x_i$, where $U_i = D_i/ \mathbb{Z}_{\alpha_i}$, $1 \leq i \leq s$. We define $M_V$ as a union of $M \backslash \{x_1,...,x_s\}$ and $\coprod  D_i \times_{\mathbb{Z}_{\alpha_i}}E \mathbb{Z}_{\alpha_i}$, which can be glued naturally.

\begin{defn}\label{901}
The $V$-cohomology group $H^{*}_V(M)$ is then defined as the cohomology
\begin{align*}
H^{*}_V(M)=H^{*}(M_V).
\end{align*}
\end{defn}
We can interpret this $V$-cohomology in terms of the cohomology of a root stack in the following way. The chart $\phi_i:U_i \to D_i/\mathbb{Z}_{\alpha_i}$ around a point $x_i$  is now understood as a map $\varphi_i:D_i\to D_i$, where $D_i$ is a formal disk around $x_i$ and $\varphi_i: z\mapsto z^{\alpha_i}$. So there is a natural $\mu_{\alpha_i}$-action on $D_i$ by sending $z\mapsto \omega_i z$, with $\omega_i $ is the $\alpha_i$'th root of unity. We can glue the quotient stack $[D_i/\mu_{\alpha_i}]$ to $D_i$ via $\varphi_i$ and this forms a smooth Deligne-Mumford stack $\widetilde{M}$. Denote by $X$ the coarse moduli space of $\widetilde{M}$, which is also the underlying space of $M$ in terms of a $V$-surface or orbifold.

We claim that the $V$-cohomology of $M$ is the same as the singular cohomology of this stack $\tilde{M}$. This can be done applying a Mayer-Vietoris argument. We calculate the cohomology of $M_V$ from the cohomology of the disjoint subspaces $A= M\backslash\{x_1,\ldots, x_s\}$ and $B=\cup_i D_i\times_{\mathbb{Z}_{\alpha_i}} E\mathbb{Z}_{\alpha_i}$. We just need to prove that this is the same as gluing $A=M\backslash\{x_1,\ldots, x_s\}$ and $C=\cup [D_i/\mu_{\alpha_i}]$.
This however comes from the definition of root:
\[H^k([D_i/\mu_{\alpha_i}],\mathbb{Z})=H^k(D_{i}\times B\mathbb{Z}_{\alpha_i},\mathbb{Z})=H^k(D_{i}\times_{\mathbb{Z}_{\alpha_i}} E\mathbb{Z}_{\alpha_i},\mathbb{Z}).\]
Moreover, $A\cap D\times_{\mathbb{Z}_{\alpha_i}} E\mathbb{Z}_{\alpha_i}$ is homotopic to $S^1$, which is in turn homotopic to $A\cap [D/\mu_{\alpha_i}]$. The Mayer-Vietoris sequence for orbifolds on $A\cup B= M_V$ and $A\cup C= \tilde{M}$ provides by the five-lemma that $H^i(M_V,\mathbb{Z})\cong H^i(\tilde{M},\mathbb{Z})$:

\[ \begin{tikzcd}[column sep=tiny]
H^{i-1}(A,\mathbb{Z})\oplus H^{i-1}(B,\mathbb{Z})  \arrow{r}{} \arrow[swap]{d}{\cong} & H^{i-1}(A\cap B,\mathbb{Z})\arrow[swap]{d}{\cong} \arrow{r}{}  & H^i(M_V,\mathbb{Z})\arrow{d}{} \arrow{r}{}  & H^{i}(A,\mathbb{Z})\oplus H^{i}(B,\mathbb{Z})  \arrow{r}{} \arrow[swap]{d}{\cong} & H^{i}(A\cap B,\mathbb{Z})\arrow[swap]{d}{\cong} \\
H^{i-1}(A,\mathbb{Z})\oplus H^{i-1}(C,\mathbb{Z})  \arrow{r}{}  & H^{i-1}(A\cap C,\mathbb{Z}) \arrow{r}{}  & H^i(\tilde{M},\mathbb{Z}) \arrow{r}{}  & H^{i}(A,\mathbb{Z})\oplus H^{i}(C,\mathbb{Z})  \arrow{r}{}  & H^{i}(A\cap C,\mathbb{Z})
\end{tikzcd}
\]

The following theorem is the basic tool in order to calculate the cohomology group $H^{*}(M)$:
\begin{thm}[Theorem 2.2 in \cite{FuSt}]\label{902}
We have the following isomorphism for the first $V$-cohomology group
\begin{align*}
H^1_{V}(M,\mathbb{Z}) \cong H^1(M, \mathbb{Z}).
\end{align*}
\end{thm}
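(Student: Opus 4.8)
The plan is to reduce the statement to a computation with first homology. The preceding discussion already identifies $H^1_V(M,\mathbb{Z})\cong H^1(\widetilde{M},\mathbb{Z})$, where $\widetilde{M}$ is the smooth Deligne--Mumford stack constructed above, whose coarse (underlying) space is the genus $g$ surface $X=|M|$. For any path-connected space the universal coefficient theorem together with the Hurewicz isomorphism gives $H^1(-,\mathbb{Z})\cong\mathrm{Hom}(H_1(-,\mathbb{Z}),\mathbb{Z})\cong\mathrm{Hom}(\pi_1(-)^{\mathrm{ab}},\mathbb{Z})$. Since $\pi_1(\widetilde{M})$ is the $V$-fundamental group $\pi_1^V(M)=\langle a_i,b_i,c_j\mid [a_1,b_1]\cdots[a_g,b_g]\,c_1\cdots c_s=1,\ c_j^m=1\rangle$, it therefore suffices to compute $\mathrm{Hom}(\pi_1^V(M)^{\mathrm{ab}},\mathbb{Z})$.

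First I would abelianize this presentation: the relation coming from the product of commutators becomes $c_1+\cdots+c_s=0$ and the relations $c_j^m=1$ become $m\,c_j=0$, so $\pi_1^V(M)^{\mathrm{ab}}\cong \mathbb{Z}^{2g}\oplus T$, where $T=(\mathbb{Z}_m)^s/\langle(1,\dots,1)\rangle$ is a finite group (for $m=2$ one gets $T\cong(\mathbb{Z}_2)^{s-1}$, which is precisely the origin of the extra rank in the $\mathbb{Z}_2$-coefficient computation $H^1_V(M,\mathbb{Z}_2)\cong\mathbb{Z}_2^{2g+s-1}$ recorded earlier in the paper). Because $T$ is torsion while $\mathbb{Z}$ is torsion-free, $\mathrm{Hom}(T,\mathbb{Z})=0$, and hence $H^1_V(M,\mathbb{Z})\cong\mathrm{Hom}(\mathbb{Z}^{2g},\mathbb{Z})\cong\mathbb{Z}^{2g}\cong H^1(X,\mathbb{Z})$. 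To make the isomorphism natural, observe that the coarse moduli (orbit) map $c\colon\widetilde{M}\to X$ kills exactly the classes $c_j$ on fundamental groups, so $c^*\colon H^1(X,\mathbb{Z})\to H^1_V(M,\mathbb{Z})$ is the $\mathrm{Hom}(-,\mathbb{Z})$-dual of the surjection $\pi_1^V(M)^{\mathrm{ab}}\twoheadrightarrow\pi_1(X)^{\mathrm{ab}}\cong\mathbb{Z}^{2g}$ with torsion kernel $T$, which is an isomorphism; this pins down the asserted identification $H^1_V(M,\mathbb{Z})\cong H^1(M,\mathbb{Z})$, where $H^1(M,\mathbb{Z})$ is read as the cohomology of the underlying surface $X$.

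An equivalent route, which dovetails with the Mayer--Vietoris apparatus already set up, is to compute $H^1(\widetilde{M},\mathbb{Z})$ directly from the decomposition $\widetilde{M}=A\cup C$ with $A=X\setminus D$, $C=\coprod_i[D_i/\mu_{\alpha_i}]$ and $A\cap C\simeq\coprod_i S^1$. The only nonstandard input is $H^1([D_i/\mu_{\alpha_i}],\mathbb{Z})\cong H^1(B\mathbb{Z}_{\alpha_i},\mathbb{Z})\cong\mathrm{Hom}(\mathbb{Z}_{\alpha_i},\mathbb{Z})=0$; granting this, the exact segment $H^0(A)\oplus H^0(C)\to H^0(A\cap C)\to H^1(\widetilde{M},\mathbb{Z})\to H^1(A,\mathbb{Z})\to H^1(A\cap C,\mathbb{Z})$ exhibits $H^1(\widetilde{M},\mathbb{Z})$ as the kernel of the restriction (residue) map $H^1(X\setminus D,\mathbb{Z})\to\bigoplus_i H^1(S^1,\mathbb{Z})$, which is the image of $H^1(X,\mathbb{Z})$ inside $H^1(X\setminus D,\mathbb{Z})$, hence $\cong\mathbb{Z}^{2g}$.

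I do not anticipate a serious obstacle: the entire content is the vanishing $\mathrm{Hom}(\mathbb{Z}_m,\mathbb{Z})=0$, equivalently the degree-one cohomological triviality of $B\mathbb{Z}_m$ with $\mathbb{Z}$-coefficients, which is exactly the statement that fails with $\mathbb{Z}_2$-coefficients and explains the discrepancy between the two computations. The only points needing care are bookkeeping: using correctly the identification $H^1_V(M,\mathbb{Z})\cong H^1(\widetilde{M},\mathbb{Z})$ established in the previous paragraphs, checking that the Mayer--Vietoris/five-lemma comparison genuinely applies to the orbifold strata (both $D_i\times_{\mathbb{Z}_{\alpha_i}}E\mathbb{Z}_{\alpha_i}$ and $[D_i/\mu_{\alpha_i}]$ being homotopy equivalent to $B\mathbb{Z}_{\alpha_i}$), and confirming that the relevant restriction map on $X\setminus D$ is the standard one whose kernel is the image of $H^1(X,\mathbb{Z})$.
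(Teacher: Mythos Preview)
Your proposal is correct; both routes you outline (abelianizing the $V$-fundamental group and applying $\mathrm{Hom}(-,\mathbb{Z})$, or running Mayer--Vietoris with the input $H^1(B\mathbb{Z}_{\alpha_i},\mathbb{Z})=0$) are valid and complete. Note, however, that the paper does not supply its own proof of this statement: it is quoted verbatim as Theorem~2.2 of Furuta--Steer and used as a black box, with the surrounding Mayer--Vietoris discussion serving only to identify $H^*_V(M,\mathbb{Z})$ with the stack cohomology $H^*(\widetilde{M},\mathbb{Z})$, not to compute either side. So there is no ``paper's proof'' to compare against; what you have written is a self-contained argument that the paper simply defers to the reference.
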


\subsection{Parabolic Bundle vs. Bundle over Stack}
We now use the language of stacks to reinterpret the correspondence between parabolic bundles and $V$-bundles over orbifolds; more details can be found in \cite{Biswas1},  \cite{Borne}, or \cite{GroePos}.

Let $X$ be a smooth compact Riemann surface and $D$ a reduced divisor $x_1+\ldots+x_s$. We consider a parabolic bundle on $(X,D)$. Suppose that all weights in the parabolic bundle are rational, and all weights over $x_a$ have the same denominator $\alpha_a$. Let $\tilde{X}$ be the root stack defined by gluing $\bigcup_{a=1}^s [D_a/\mu_{\alpha_a}]$ to the neighborhood of $x_a$ by the map $\varphi_i: z\mapsto z^{\alpha_a}$, for all $a\in \{1,2,\ldots,s\}$. The following theorem is due to N. Borne \cite{Borne}:
\begin{thm}[Th\'{e}or\`{e}me 3.13 in \cite{Borne}]
There is an equivalence of categories between parabolic bundles over $(X,D)$ such that all weights over $x_a$ are with denominator $\alpha_a$, and vector bundles on the root stack $\tilde{X}$.
\end{thm}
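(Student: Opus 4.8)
The plan is to prove the equivalence through the $\alpha_a$-th root stack together with a local computation with $\mu_{\alpha_a}$-representations, following Borne's approach. First I would fix notation: for each $a$ write $r_a=\alpha_a$ and let $\pi:\tilde X\to X$ be the iterated root stack $\sqrt[r_\bullet]{D/X}$, carrying for each $x_a$ a tautological line bundle $\mathcal N_a$ together with a section $s_a:\mathcal O_{\tilde X}\to\mathcal N_a$ such that $\mathcal N_a^{\otimes r_a}\cong\pi^*\mathcal O_X(x_a)$ and $s_a^{\otimes r_a}$ is the pullback of the canonical section of $\mathcal O_X(x_a)$. Since both categories in play are sheaves for the étale topology on $X$ and $\pi$ is an isomorphism over $X\setminus D$, the statement localises to a formal or étale neighbourhood of each $x_a$, which is where it has content.

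Next I would define the functor from vector bundles on $\tilde X$ to parabolic bundles. Given $V$ locally free on $\tilde X$, put $V_{j/r_a}:=\pi_*(V\otimes\mathcal N_a^{\otimes(-j)})$ for $0\le j\le r_a$, extended periodically via $V_{(j+r_a)/r_a}=V_{j/r_a}\otimes\mathcal O_X(x_a)$. Multiplication by $s_a$ yields injections $V_{(j+1)/r_a}\hookrightarrow V_{j/r_a}$ with $V_0=\pi_*V=:\mathcal E$ and $V_1=\mathcal E(-x_a)$; restricting this chain to the fibre at $x_a$ recovers the parabolic filtration of $\mathcal E|_{x_a}$, with weight $j/r_a$ attached to $V_{j/r_a}$ (normalised so all weights lie in $[0,1)$) and the multiplicities read off the ranks of the successive quotients. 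Flatness of $V$ and the projection formula show each $V_{j/r_a}$ is locally free and each quotient locally free along $D$, so the output is a parabolic vector bundle, and functoriality in $V$ is immediate.

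Then I would build the inverse functor via the local model. In an étale neighbourhood of $x_a$ the stack $\tilde X$ is $[\operatorname{Spec}A/\mu_{r_a}]$ with $A=\mathcal O_{X,x_a}[t]/(t^{r_a}-z)$, $z$ a uniformiser of $x_a$ and $\mu_{r_a}$ acting on $t$ by the standard character, so a vector bundle on $\tilde X$ is a finite projective $\mu_{r_a}$-equivariant $A$-module $M$. As $r_a$ is invertible, $\mu_{r_a}$-representations are semisimple, whence $M=\bigoplus_{j\in\mathbb Z/r_a}M^{(j)}$ splits into isotypic components with $t$ shifting the grading and $t^{r_a}=z$; the chain $M^{(0)}\supseteq tM^{(1)}\supseteq\cdots\supseteq t^{r_a-1}M^{(r_a-1)}\supseteq zM^{(0)}$ of $\mathcal O_{X,x_a}$-modules is precisely a parabolic filtration with weights in $\tfrac1{r_a}\mathbb Z$. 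Conversely, from a parabolic filtration $\mathcal E=F^0\supseteq F^1\supseteq\cdots\supseteq F^{r_a}=\mathcal E(-x_a)$ one reassembles $M^{(j)}:=t^{-j}F^j$ inside the generic fibre of $\mathcal E$, forms $M=\bigoplus_j M^{(j)}$ with its evident $A$-module and $\mu_{r_a}$-equivariant structure, and checks that $M$ is projective. These assignments are mutually inverse in the local model and patch with the identity over $X\setminus D$, so they glue to a functor from parabolic bundles to bundles on $\tilde X$.

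Finally I would check the two functors are quasi-inverse and that morphisms match up. One composition is the projection formula applied to $\mathcal N_a$; the other is the identities $\pi_*V=M^{(0)}$ and $\pi_*(V\otimes\mathcal N_a^{\otimes(-j)})=t^{-j}F^j$ supplied by the local computation. A morphism of bundles on $\tilde X$ restricts to a parabolic morphism because it commutes with multiplication by $s_a$; conversely a parabolic morphism in the sense of Definition~\ref{202} lifts uniquely, which locally is the elementary fact that a map respecting the filtrations $t^{-j}F^j$ is automatically $\mu_{r_a}$-equivariant on the reassembled modules. The step I expect to be the main obstacle is the bookkeeping at $D$: one must make the reassembly $M^{(j)}=t^{-j}F^j$ canonical and independent of the choice of uniformiser, and match the normalisation (saturation) conditions implicit in the definition of a parabolic bundle with projectivity of $M$ on the stack side, so that no non-locally-free sheaves on $\tilde X$ or degenerate filtrations on $X$ sneak in; the remaining verifications are formal manipulations with $\pi_*$ and the projection formula.
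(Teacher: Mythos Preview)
Your proposal is correct and follows essentially the same approach as the paper's sketch (which in turn follows Borne): the forward functor $V\mapsto \bigl(\pi_*(V\otimes\mathcal N_a^{-j})\bigr)_j$ is identical to the paper's $F_a^i=\pi_*(\tilde E\otimes\mathcal L_a^{-i+1})$, and your inverse via the local $\mu_{r_a}$-equivariant isotypic decomposition is the local incarnation of the paper's global ``gluing of $\mathcal L_a^i\otimes\pi^*E_a^i$'' (the isotypic components are precisely the local twists by powers of $\mathcal L_a$). Your write-up is in fact more detailed and self-contained than the paper's, which defers the inverse construction and the verification that the two functors are quasi-inverse to Borne's article.
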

We sketch the idea of the proof here. We first notice that from a parabolic bundle $q_a:E\to E|_{x_a}$ we can recover the corresponding parabolic structure on $x_a$. On the other hand, given a parabolic structure $\{0=E_{r+1}\subset\ldots \subset E_0=E|_{x_a} \}$, by taking the inverse $q_a$ we can get a filtration
\[E(-x_a)= F_a^{r+1}\subset F_a^{r}\subset\ldots \subset F_a^1=E. \]
This implies that a parabolic structure on a bundle is the same as a bundle filtration like above on a neighborhood of each point in the divisor.

Now we review the correspondence to an orbifold bundle. Let $\pi: \tilde{X}\to X$ mapping a stack to its underlying space; then for the line bundle $\mathcal{O}(x_a)$ we know that there is a bundle $\mathcal{L}_a$ such that $\mathcal{L}_a^{\otimes \alpha_a}\cong \pi^* \mathcal{O}(x_a)$. We also denote $\mathcal{L}_a\cong \mathcal{O}(\tilde{x}_a)$ and $\mathcal{O}(\tilde{D})=\prod_{a=1}^s \mathcal{O}(\tilde{x}_a)$ so that these bundles actually correspond to the fractional divisor line bundle in the classical theory of orbifold.

For every orbifold bundle $\tilde{E}$, we can define the parabolic bundle by $E=\pi_* \tilde{E}$, and locally on $x_a$ we get the parabolic structure on the neighborhood by setting $F_a^i=\pi_*(\tilde{E}\otimes \mathcal{L}_a^{-i+1})$. Since we already know that $\pi_*\mathcal{L}_a^{-\alpha_a}=\pi_* \pi^* \mathcal{O}(-x_a)= \mathcal{O}(-x_a)$, this forms the desired filtration.

On the other hand, if we have a parabolic bundle with $E_a^i=\ker(E|_{x_a}\to E/F_i^a)$, we can construct an orbifold bundle by gluing together $\mathcal{L}_a^i\otimes \pi^* E_a^i$ in a nice way; we refer to N. Borne's article \cite{Borne} for more details on this step. The result by Borne is that these two maps are actually inverse to each other and thus define an equivalence between categories of parabolic bundles and bundles over a root stack. Similarly, this correspondence can be naturally extended to Higgs bundles.

\begin{thm}[Theorem 4.7 in \cite{BiMaWo}]
There is an equivalence of categories of $K(D)$-twisted Higgs bundles on $\widetilde{X}$ and parabolic Higgs bundles on $(X,D)$.
\end{thm}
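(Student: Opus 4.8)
The plan is to bootstrap the statement from N. Borne's equivalence $F:\mathrm{Vect}(\widetilde{X})\xrightarrow{\sim}\mathrm{Par}(X,D)$ recalled above, by checking that $F$ is compatible with the $K(D)$-twisting functors on the two sides; once this is known, the presence of a Higgs field adds no new difficulty, since a twisted Higgs field is simply a morphism to a twist. Write $\pi:\widetilde{X}\to X$ for the coarse moduli map, $F(\tilde{E})=\pi_*\tilde{E}=:E$, and recall that the parabolic structure of $E$ at a marked point $x_a$ is $F_a^i=\pi_*(\tilde{E}\otimes\mathcal{L}_a^{-i+1})$, where $\mathcal{L}_a=\mathcal{O}(\tilde{x}_a)$ satisfies $\mathcal{L}_a^{\otimes\alpha_a}\cong\pi^*\mathcal{O}(x_a)$. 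Here a ``$K(D)$-twisted Higgs bundle on $\widetilde{X}$'' means a pair $(\tilde{E},\tilde{\Phi})$ with $\tilde{\Phi}:\tilde{E}\to\tilde{E}\otimes\pi^*K(D)$, and since $\pi^*K(D)$ has rank one there is no integrability condition to impose.

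The first and main step is to show that $F$ intertwines $-\otimes\pi^*K(D)$ with $-\otimes K(D)$, the latter $K(D)$ carrying the trivial parabolic structure. This follows from the projection formula $\pi_*(\tilde{E}\otimes\pi^*L)\cong\pi_*(\tilde{E})\otimes L$, valid for any line bundle $L$ on $X$ because $\pi$ is the (good/coarse) moduli morphism and $L$ is pulled back from $X$. Applying the same formula to the twists by $\mathcal{L}_a^{-i+1}$ shows that the parabolic structure induced on $\pi_*(\tilde{E}\otimes\pi^*K(D))$ at $x_a$ is exactly $\{F_a^i\otimes K(D)\}_i$, i.e. the natural parabolic structure on $E\otimes K(D)$ with $K(D)$ trivially parabolic; in particular the weights are unchanged. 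Thus $F$ carries $\tilde{E}\otimes\pi^*K(D)$ to $E\otimes K(D)$, naturally in $\tilde{E}$.

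Combining this with full faithfulness of $F$, for every $\tilde{E}$ there is a bijection
\begin{align*}
\mathrm{Hom}_{\widetilde{X}}\bigl(\tilde{E},\tilde{E}\otimes\pi^*K(D)\bigr)\;\xrightarrow{\ \sim\ }\;\mathrm{Hom}_{\mathrm{Par}(X,D)}\bigl(E,E\otimes K(D)\bigr),
\end{align*}
where the right-hand side is the space of parabolic homomorphisms in the sense of Definition \ref{202}: the pushforward of a stack morphism manifestly respects the filtrations $F_a^i$ by the formula above, and conversely every parabolic homomorphism lifts to $\widetilde{X}$ by full faithfulness. Hence $(\tilde{E},\tilde{\Phi})\mapsto(F(\tilde{E}),F(\tilde{\Phi}))$ is a well-defined functor from $K(D)$-twisted Higgs bundles on $\widetilde{X}$ to parabolic Higgs bundles on $(X,D)$ which is bijective on objects up to isomorphism and, since a morphism of (twisted) Higgs bundles is just a morphism of the underlying bundles commuting with the Higgs fields, bijective on morphisms as well. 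A quasi-inverse is furnished by Borne's quasi-inverse to $F$ together with the same projection-formula identification. This yields the claimed equivalence of categories.

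I expect the only genuine verification — the rest being formal — to be the compatibility of the twist with the parabolic structure at the divisor: one must confirm that the projection formula holds on the root stack $\widetilde{X}$ and that tensoring by $\pi^*K(D)$ does not perturb the filtrations $F_a^i$ defining the parabolic weights, i.e. that $K(D)$ genuinely acquires the trivial parabolic structure after pushforward. Once this local check at each $x_a$ is done, everything else follows from the full faithfulness and tensor-compatibility of $F$; the strongly parabolic variant is handled identically after replacing $\pi^*K(D)$ by the stacky twist that encodes the nilpotency-of-residue condition.
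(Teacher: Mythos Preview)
The paper does not supply its own proof of this statement: it is quoted verbatim as Theorem~4.7 of \cite{BiMaWo}, preceded only by the remark that Borne's correspondence ``can be naturally extended to Higgs bundles.'' Your argument is correct and makes exactly this natural extension explicit via the projection formula $\pi_*(\tilde{E}\otimes\pi^*L)\cong\pi_*\tilde{E}\otimes L$ and the full faithfulness of Borne's functor, which is the standard and expected route; there is nothing to compare against beyond the paper's one-line hint, and your proof fills it in faithfully.
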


For the case of strongly parabolic Higgs bundles, there is a similar correspondence:
\begin{thm}[Theorem 4.7 in \cite{BiMaWo}]
There is an equivalence of categories of Higgs bundles on $\widetilde{X}$ and strongly parabolic Higgs bundles on $(X,D)$.
\end{thm}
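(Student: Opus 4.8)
The plan is to obtain this equivalence from the previous one --- between $K(D)$-twisted Higgs bundles on $\widetilde{X}$ and parabolic Higgs bundles on $(X,D)$ --- by cutting out on both sides the full subcategory singled out by the strongly parabolic condition. The key point is that an ordinary Higgs bundle on the Deligne--Mumford stack $\widetilde{X}$ is a pair $(\widetilde{E},\widetilde{\phi})$ with $\widetilde{\phi}\colon\widetilde{E}\to\widetilde{E}\otimes\Omega_{\widetilde{X}}$, and the cotangent sheaf $\Omega_{\widetilde{X}}$ is a proper subsheaf of $\pi^{*}K(D)$; so a Higgs bundle on $\widetilde{X}$ is precisely a $K(D)$-twisted Higgs bundle on $\widetilde{X}$ whose field factors through this subsheaf.

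First I would compute $\Omega_{\widetilde{X}}$. In a local chart with $w^{\alpha_a}=z$ around a marked point one has $dz=\alpha_a w^{\alpha_a-1}\,dw$, so $\pi^{*}\Omega_{X}\hookrightarrow\Omega_{\widetilde{X}}$ with cokernel of length $\alpha_a-1$ at $\widetilde{x}_a$; combined with $\pi^{*}\mathcal{O}_{X}(x_a)=\mathcal{O}_{\widetilde{X}}(\alpha_a\widetilde{x}_a)$ and $K(D)=K_{X}\otimes\mathcal{O}_{X}(D)$ this gives
\begin{align*}
\Omega_{\widetilde{X}}\;\cong\;\pi^{*}K_{X}\otimes\mathcal{O}_{\widetilde{X}}\Bigl(\textstyle\sum_{a}(\alpha_a-1)\widetilde{x}_a\Bigr)\;\cong\;\pi^{*}K(D)\otimes\mathcal{O}_{\widetilde{X}}(-\widetilde{D}),
\end{align*}
where $\widetilde{D}=\sum_{a}\widetilde{x}_a$. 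Hence a Higgs bundle $(\widetilde{E},\widetilde{\phi})$ on $\widetilde{X}$ is exactly a $K(D)$-twisted Higgs bundle whose field has image inside $\widetilde{E}\otimes\pi^{*}K(D)(-\widetilde{D})\subseteq\widetilde{E}\otimes\pi^{*}K(D)$, and this reformulation is visibly compatible with morphisms, which on either side are the $\mathcal{O}$-module maps commuting with the Higgs fields.

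Next I would feed this into the $K(D)$-twisted equivalence of the preceding theorem and read off what the factorization condition means downstairs. Under Borne's correspondence $\widetilde{E}\mapsto E=\pi_{*}\widetilde{E}$ the filtration at $x_a$ is $F_a^{i}=\pi_{*}(\widetilde{E}\otimes\mathcal{L}_a^{-i+1})$, so tensoring by $\mathcal{L}_a^{-1}=\mathcal{O}_{\widetilde{X}}(-\widetilde{x}_a)$ shifts the filtration index by one. Applying $\pi_{*}$ to $\widetilde{\phi}\otimes\mathrm{id}\colon\widetilde{E}\otimes\mathcal{L}_a^{-i+1}\to\widetilde{E}\otimes\mathcal{L}_a^{-i}\otimes\pi^{*}K(D)$ and using the projection formula then shows that the induced parabolic Higgs field satisfies $\phi(E_{x_a,i})\subseteq E_{x_a,i+1}\otimes K(D)$ for every marked point and every step of the filtration --- i.e.\ $\phi$ is strongly parabolic in the sense of Definition~\ref{202} (equivalently, simple poles along $D$ with nilpotent residues). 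Running the same computation backwards shows that a strongly parabolic Higgs field pulls back to a Higgs field valued in $\Omega_{\widetilde{X}}$, and functoriality of $\pi_{*}$ and $\pi^{*}$ transports the bijections on objects and morphisms from the known equivalence, giving the claim.

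The step I expect to be the main obstacle is the local analysis at the orbifold points: one must keep track of the $\mu_{\alpha_a}$-equivariant structure --- the isotypic decomposition of $\widetilde{E}$ near $\widetilde{x}_a$ and the weight by which $\mu_{\alpha_a}$ acts on $dw$ versus $dz$ --- in order to see \emph{precisely} that the twist by $\mathcal{O}_{\widetilde{X}}(-\widetilde{D})$ is a one-step shift of the parabolic filtration, hence that ``$\widetilde{\phi}$ valued in the genuine cotangent sheaf'' matches ``$\phi$ strongly parabolic'' and not merely ``$\phi$ parabolic''. Everything else reduces to the already-established $K(D)$-twisted case together with the projection formula, and is routine.
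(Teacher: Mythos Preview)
The paper does not actually prove this statement; it is quoted from \cite{BiMaWo} and no argument is supplied in the paper itself, so there is no ``paper's own proof'' to compare against. Your proposal is the standard argument and is correct: the identification $\Omega_{\widetilde{X}}\cong\pi^{*}K(D)\otimes\mathcal{O}_{\widetilde{X}}(-\widetilde{D})$ is exactly right, and under Borne's dictionary the twist by $\mathcal{O}_{\widetilde{X}}(-\widetilde{x}_a)=\mathcal{L}_a^{-1}$ is precisely a one-step shift of the parabolic filtration at $x_a$, which is what converts ``parabolic'' into ``strongly parabolic.'' The only related computation the paper does carry out is at the end of Appendix~A.3, where it evaluates $\pi_{*}K_{\widetilde{X}}^{\,j}\cong K_X^{\,j}((j-1)D)$ to match Hitchin bases; that calculation is consistent with (and essentially a consequence of) your formula for $\Omega_{\widetilde{X}}$, but it is not offered as a proof of the categorical equivalence.
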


\subsection{BNR Correspondence for Orbicurves}

We review the classical theorem describing a correspondence between parabolic Higgs bundles and Higgs bundles over a root stack. Moreover, we shall imply a BNR correspondence on Deligne-Mumford stacks, and see how to reinterpret this correspondence for parabolic Higgs bundles in terms of stacks. The idea is similar to the classical BNR correspondence for smooth curves. We finally refer the reader to \cite{GaPe} for a description of the fibers of the Hitchin map by means of cameral data.

We use the same notation as in the previous subsection. Let $\widetilde{X}$ be a root stack, and let $X$ be its underlying space. We consider the line bundle $K_{\tilde{X}}(\tilde{D})=\pi^*(K_{X}(D))$ over $\widetilde{X}$. In this case, we look at the Hitchin base $H_{st}=\bigoplus_{j=1}^n H^0(\tilde{X}, (K_{\tilde{X}}(\tilde{D}))^j)$ and consider the Hitchin map for stacks

\begin{align*}
h:\mathcal{M}_{Higgs}(\widetilde{X}) \rightarrow H_{st},
\end{align*}
where $\mathcal{M}_{Higgs}(\widetilde{X})$ is the moduli space of Higgs bundles over $\widetilde{X}$. Given an element $\eta=(\eta_1,...,\eta_n) \in H_{st}$, we can consider the spectral curve $\widetilde{X}_\eta$ as well. The following theorem is due to M. Groechenig \cite{GroePos}.

\begin{thm}[BNR correspodence for a Deligne-Mumford stack \cite{GroePos}] Let $\widetilde{X}$ be a smooth Deligne-Mumford stack. Let $\mathfrak{L}$ be a fixed locally free sheaf on $\widetilde{X}$. Then there is a one-to-one correspondence between $\mathfrak{L}$-twisted Higgs bundles on $\widetilde{X}$ with spectral data $\eta$ and coherent sheaves on $\widetilde{X}_\eta$, where $\widetilde{X}_\eta$ is the spectral curve associated to $X$. In particular when $\mathfrak{L}=K_{\tilde{X}}(D)$ we get the parabolic Higgs bundles.
\end{thm}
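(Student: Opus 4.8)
The plan is to follow the classical Beauville--Narasimhan--Ramanan argument, which is formal once one has the total space of a locally free sheaf and the notion of an affine morphism, both available verbatim for smooth Deligne--Mumford stacks. First I would set up the spectral geometry: form the total space $\tot(\mathfrak{L})$ of $\mathfrak{L}$ over $\widetilde{X}$, itself a smooth Deligne--Mumford stack, with structure map $p:\tot(\mathfrak{L})\to\widetilde{X}$ and tautological section $\lambda\in H^0(\tot(\mathfrak{L}),p^*\mathfrak{L})$; then, for spectral data $\eta=(\eta_1,\dots,\eta_n)$ with $\eta_j\in H^0(\widetilde{X},\mathfrak{L}^{j})$, define the spectral stack $\widetilde{X}_\eta\subset\tot(\mathfrak{L})$ as the closed substack cut out by $\lambda^{n}+p^*\eta_1\lambda^{n-1}+\dots+p^*\eta_n$, with $\pi:=p|_{\widetilde{X}_\eta}$. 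The first point to check is that $\pi$ is finite and flat of relative degree $n$: this is \'{e}tale-local on $\widetilde{X}$, and on an \'{e}tale chart $U\to\widetilde{X}$ (a scheme, since $\widetilde{X}$ is Deligne--Mumford) over which $\mathfrak{L}$ trivialises it reduces to the elementary fact that $\mathcal{O}_U[t]/(t^{n}+\eta_1 t^{n-1}+\dots+\eta_n)$ is locally free of rank $n$ over $\mathcal{O}_U$. Equivalently, $\pi_*\mathcal{O}_{\widetilde{X}_\eta}$ is the sheaf of $\mathcal{O}_{\widetilde{X}}$-algebras $\mathcal{A}_\eta=\bigoplus_{i=0}^{n-1}\mathfrak{L}^{-i}$, the quotient of $\mathrm{Sym}^{\bullet}_{\mathcal{O}_{\widetilde{X}}}(\mathfrak{L}^{-1})$ by the ideal generated by the characteristic relation.

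Next, because $\pi$ is an affine morphism, push-forward along $\pi$ induces an equivalence between coherent sheaves on $\widetilde{X}_\eta$ and coherent $\mathcal{A}_\eta$-modules on $\widetilde{X}$; as before this is checked on an \'{e}tale atlas and reduces to the affine case. It then remains to unwind an $\mathcal{A}_\eta$-module structure: since $\mathcal{A}_\eta$ is generated over $\mathcal{O}_{\widetilde{X}}$ by the degree-one summand $\mathfrak{L}^{-1}$ subject only to the characteristic relation, giving a coherent sheaf $E$ an $\mathcal{A}_\eta$-action amounts to giving an $\mathcal{O}_{\widetilde{X}}$-linear map $\mathfrak{L}^{-1}\otimes E\to E$, i.e. a morphism $\Phi:E\to E\otimes\mathfrak{L}$, such that multiplication by the algebra generator obeys $\Phi^{n}+\eta_1\Phi^{n-1}+\dots+\eta_n=0$ --- that is, an $\mathfrak{L}$-twisted Higgs field with spectral data $\eta$. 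One then verifies that the two constructions, taking $(E,\Phi)$ to the corresponding sheaf on $\widetilde{X}_\eta$ and taking a sheaf $F$ to $\pi_*F$ equipped with multiplication by $\lambda$, are mutually inverse and functorial. Finally, $E=\pi_*F$ is locally free exactly when $F$ is flat over $\widetilde{X}$, so Higgs \emph{bundles} match $\widetilde{X}$-flat coherent sheaves on $\widetilde{X}_\eta$ (rank-one torsion-free sheaves, hence line bundles, when $\widetilde{X}_\eta$ is smooth and integral).

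For the last sentence of the statement I would specialise $\widetilde{X}$ to the root stack of $(X,D)$ with isotropy $\alpha_a$ at $x_a$ and $\mathfrak{L}=K_{\widetilde{X}}(\widetilde{D})=\pi^*K_X(D)$. Borne's equivalence \cite{Borne}, recalled above, identifies vector bundles on $\widetilde{X}$ with parabolic bundles on $(X,D)$ having denominators $\alpha_a$, and Theorem 4.7 of \cite{BiMaWo} promotes this to an equivalence of $K_{\widetilde{X}}(\widetilde{D})$-twisted Higgs bundles on $\widetilde{X}$ with parabolic Higgs bundles on $(X,D)$; composing with the stacky BNR correspondence above yields the parabolic case. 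On the spectral side, $\widetilde{X}_\eta$ is again a root stack over its coarse space $X_\eta$, with isotropy at the two points of $D_\eta=\pi^{-1}(D)$ above each $x_a$, so a further application of Borne turns line bundles on $\widetilde{X}_\eta$ into parabolic line bundles on $(X_\eta,D_\eta)$, recovering Proposition \ref{301} and the $V$-Picard group description used in the body of the paper.

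The step I expect to be the main obstacle is organisational rather than conceptual: one must verify that all the local constructions --- the spectral substack, its finiteness and flatness, and the affineness dictionary between sheaves on $\widetilde{X}_\eta$ and $\mathcal{A}_\eta$-modules --- are canonical and descend along an \'{e}tale atlas of $\widetilde{X}$, i.e. make precise the slogan that \emph{BNR is \'{e}tale-local on the base and therefore makes sense over any smooth Deligne--Mumford stack}. Beyond this descent bookkeeping no new geometric input is required, since everything reduces to the scheme-theoretic BNR correspondence together with the Borne/\cite{BiMaWo} dictionary between the root stack and the parabolic world.
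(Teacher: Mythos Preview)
Your proposal is correct and follows essentially the same approach as the paper: both arguments reinterpret an $\mathfrak{L}$-twisted Higgs field as a $\mathrm{Sym}^{\bullet}(\mathfrak{L}^{-1})$-module structure on $E$, invoke Cayley--Hamilton to pass to the quotient algebra $\mathcal{A}_\eta$, and identify $\mathcal{A}_\eta$-modules with coherent sheaves on the spectral stack via affineness of $\pi$. Your write-up is in fact more complete than the paper's sketch --- you make the \'{e}tale-local verifications explicit, supply the inverse functor $F\mapsto(\pi_*F,\,\cdot\lambda)$, distinguish bundles from general coherent sheaves, and spell out the Borne/Biswas--Majumder--Wong dictionary for the final parabolic clause --- but no new idea is involved.
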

\begin{proof}
For an $\mathfrak{L}$-twisted Higgs bundle $(E,\Phi)$ on $\widetilde{X}$, we can think of the Higgs field as $\Phi: E\to E\otimes \mathfrak{L}$. Now this is the same as $\Phi: E\otimes \mathfrak{L}^{-1}\to E$, which is giving $E$ a structure of $\mathrm{Sym}^{\bullet}(\mathfrak{L}^{-1})$-module. By the Cayley-Hamilton theorem, this action vanishes at the characteristic polynomial $\lambda^n+\eta_1\lambda^{n-1}+\eta_2\lambda^{n-2}+\cdots+\eta_n$, and so $E$ is actually a $\mathrm{Sym}^{\bullet}(L^{-1} )/(\lambda^n+\eta_1\lambda^{n-1}+\eta_2\lambda^{n-2}+\cdots+\eta_n)$-module, and thus a coherent sheaf on $\widetilde{X}_\eta$.
\end{proof}

Therefore the classical BNR correspondence provides the spectral curve, and now the root stack structure remembers the parabolic structure on the Riemann surface, as we have seen earlier in the proof of the parabolic BNR correspondence.

For strongly parabolic Higgs bundles, the Hitchin base is $$\mathcal{H}:=\bigoplus_{j=1}^n H^0(\tilde{X},K_{\tilde{X}}^j)=\bigoplus_{j=1}^n H^0(X, \pi_*(K_{\tilde{X}}^j)).$$
Since $\pi_*(K_{\tilde{X}}^j)$ is exactly the $\mu_{\alpha_a}$-fixed part locally at each point $x_a$ and $\pi_*(L_a^{m})=O(\lfloor\frac{m}{\alpha_a}\rfloor x_a)$, we have
\[\pi_*(K_{\tilde{X}}^j)=K_X^j\otimes \prod_{a=1}^s \pi_*(L_a^{(\alpha_a-1) \cdot j})=K_X^j\otimes \prod_{a=1}^s O((j-1) x_a)=K_X^j ((j-1)D).\]
This is true as long as $j\le \alpha_a$. Thus $H^0(\tilde{X},K_{\tilde{X}}^j)\cong H^0(X, K_X(D)^j\otimes \mathcal{O}_X(-D))$ so that the Hitchin base for the root stack is actually the Hitchin base for parabolic as well as strongly parabolic Higgs bundles, as desired.

\vspace{2mm}
\textbf{Acknowledgments}.
We would like to express our sincerest acknowledgements to the anonymous referee for their valuable remarks which lead to an improvement of this article. We also thank Laura Schaposnik for making useful comments on a previous article of ours, which provided the motivation for this work, as well as Marina Logares and Andr\'{e} Oliveira for very helpful suggestions. G. K. kindly thanks the Labex IRMIA of the Universit\'{e} de Strasbourg for  support during the completion of this project. H. S. is partially supported by GDBABRF (Guangdong Basic and Applied Basic Research Foundation) 2019A1515110961.
\vspace{2mm}

\bigskip

\noindent\small{\textsc{Max-Planck-Institut f\"{u}r Mathematik}\\
Vivatsgasse 7, 53111 Bonn, Germany}\\
\emph{E-mail address}:  \texttt{kydonakis@mpim-bonn.mpg.de}

\bigskip
\noindent\small{\textsc{Department of Mathematics, South China University of Technology}\\
381 Wushan Rd, Guangzhou, Guangdong, China}\\
\emph{E-mail address}:  \texttt{hsun71275@scut.edu.cn}

\bigskip
\noindent\small{\textsc{Department of Mathematics, University of Illinois at Urbana-Champaign}\\
1409 W. Green St, Urbana, IL 61801, USA}\\
\emph{E-mail address}: \texttt{lzhao35@illinois.edu}
\end{document}